\theoremstyle{break}
\numberwithin{equation}{section}
 \newtheorem{thm}{Theorem}[section]
 \newtheorem{cor}[thm]{Corollary}
 \newtheorem{lem}[thm]{Lemma}
 \newtheorem{prop}[thm]{Proposition}
 \newtheorem{claim}[thm]{Claim}
 \newtheorem{defn}[thm]{Definition}
 \newtheorem{rem}[thm]{Remark}
\newtheorem*{thma}{THEOREM}
 \newcommand{\set}[1]{\left\{#1\right\}}
\newcommand{\eps}{\varepsilon}
\newcommand \be     {\begin{equation}}
\newcommand \ee     {\end{equation}}
\newcommand {\RR} {\mathbb{R}}
 \newcommand{\pa}{\partial}
 \newcommand{\suml}{\sum\limits}
 \newcommand{\ylambk}{y;\lambda,k}
\subjclass[2010]{Primary 35J25; Secondary 35P20, 58J50}
\keywords{concentration, non-concentration, layered media, eigenfunctions, second-order elliptic, diffusion coefficient, piecewise constant, bounded variation, well of profile, exponential decay}
\date{\today}
\begin{document}
\setcounter{thm}{0}
\title{\textsf{\textbf{\Large
    CONCENTRATION AND NON-CONCENTRATION OF  EIGENFUNCTIONS OF SECOND-ORDER ELLIPTIC  OPERATORS IN LAYERED MEDIA}}}

\author{Assia Benabdallah$^{\dag}$, Matania Ben-Artzi$^{\ddag}$\,\& Yves Dermenjian$^{\dag}$,\\
\dag \,Aix Marseille Univ, CNRS, Centrale Marseille, I2M, Marseille, France\\
\ddag\, Institute of Mathematics, Hebrew University of Jerusalem, Jerusalem 91904, Israel\\
}

\begin{abstract}

  This work is concerned with  operators of the type $A=-\tilde{c}\Delta$  acting in domains $\Omega:=\Omega'\times (0,H)\subseteq\mathbb{R}^{d}\times\RR_+.$   The diffusion coefficient $\tilde{c}>0$ depends on one coordinate $y\in (0,H)$ and is bounded but may be discontinuous. This corresponds to the physical model of ``layered media'', appearing in acoustics, elasticity, optical fibers... Dirichlet boundary conditions are assumed. In  general, for each $\varepsilon>0,$ the set of eigenfunctions is divided  into a disjoint union of three subsets : $\mathfrak{F}_{NG}$ \,(non-guided), \,$\mathfrak{F}_{G}$\, (guided) \,and $\mathfrak{F}_{res}$ \,(residual). The residual set shrinks as $\varepsilon \to 0.$  The customary physical terminology of guided/non-guided is often replaced in the mathematical literature by concentrating/non-concentrating solutions, respectively.

  For guided waves, the assumption of ``layered media'' enables us to obtain rigorous  estimates  of their  exponential decay away from concentration zones. The case of non-guided  waves has attracted less attention in the literature. While it is not so closely connected to physical models, it leads to some very interesting questions concerning   oscillatory solutions and their asymptotic properties. Classical asymptotic methods are available for $c(y)\in C^2$ but a lesser degree of regularity excludes such methods. The associated eigenfunctions (in $\mathfrak{F}_{NG}$) are oscillatory. However, this fact by itself does not exclude the possibility of ``flattening out'' of the solution between two consecutive zeros, leading to concentration in the complementary segment. Here we show it cannot happen when $c(y)$ is of bounded variation, by proving a \textit{``minimal amplitude hypothesis''}. However the validity of such results when $c(y)$ is not of bounded variation (even if it is continuous) remains an open problem.
\end{abstract}
\maketitle
\section{\textbf{INTRODUCTION}}
\label{section-introduction}
%

Let $\Omega'\subseteq\mathbb{R}^{d},\,\,d=1,2,\ldots$ be an open bounded smooth domain. In particular, the eigenfunctions of $-\Delta$ in $\Omega'$ form a complete basis in $L^2(\Omega').$ Our domain of interest is
\be\label{eqomega}\Omega=\Omega'\times (0,H)\subseteq\mathbb{R}^{d}\times\RR_+.\ee
Observe that our regularity assumption on $\Omega'$ can be considerably relaxed, but this is not the main thrust of the present paper.

 In this paper one type of self-adjoint second-order elliptic operators is considered (details are given in Section ~\ref{secsetup} below)
 \be\label{eqoperator}
 \aligned
A=-\tilde{c}\,\Delta \,\,\mbox{in}\,\, L^{2}(\Omega;\tilde{c}(x)^{-1}\,dx) \,\,\mbox{with domain }\,\, H^2(\Omega)\cap H^1_0(\Omega).
\endaligned
\ee

Our study deals with \textit{layered media}, namely, the \textit{diffusion coefficient} $\tilde{c}$ depends only on the single spatial coordinate $y\in (0,H),$  so that $\tilde{c}(x)=\tilde{c}(x',y)=c(y). $ We use the terminology of \textit{diffusion coefficient} for lack of a better choice since it appears in the ``diffusive term''. Note that in the study of the associated wave equation it has the physical meaning of the variable speed of sound.

The dependence of $\tilde{c}$ on a single coordinate results in studying the spectral properties of $A$ via an infinite set of ordinary differential operators with effective increasing potentials (See Remark ~\ref{reminfinitekl}).

We always assume homogeneous Dirichlet boundary conditions. Generally speaking, the family of eigenfunctions is split into two categories: those sets of eigenfunctions (or sequences with increasing eigenvalues) involving concentration of  mass in proper subdomains of $\Omega,$ and those for which such concentration does not occur.

These two categories have been studied by physicists since a long time, investigating diverse phenomena ranging from acoustics to elasticity and to optical fibers\footnote{Optical fibers are associated with the Maxwell system and are a good illustration to the material in this paper, see \cite{Cherin:1}}. In general, the terminology used in the physical literature has referred to \textit{guided} or \textit{non-guided} waves, corresponding, respectively, to \textit{concentrating} or \textit{non-concentrating} modes. We shall use these terms interchangeably, as is appropriate in a particular context.

The reader is referred to ~\cite{grebenkov} for a survey of the geometrical structure of the eigenfunctions of the Laplacian, with very extensive bibliography.

As far back as 1930, Epstein \cite{EP:1}  established (in unbounded domains) the existence of acoustic guided waves that are generalized eigenfunctions, i.e. not belonging to the domain of the operator, and are evanescent outside a ``guiding channel''. The underlying speeds were analytic functions depending on a single vertical coordinate. See ~\cite{Ped-Wh:1} for a more general study of Epstein's profiles. An extensive study of guided waves in the acoustic case can be found in ~\cite{Wil:1} and its bibliography.

We mention briefly some other physical instances where guided waves play a significant role.
\begin{itemize}
\item The \textit{step-index fiber} ~\cite{Cherin:1}. It is a basic model of a cylindrical fiber consisting of a core and external shell (``cladding'') carrying
 two speeds with that of the core smaller than that of the shell. It  is a good example of the concentration of the energy in the core. This concentration is increasing when the  radius of the core is diminishing.

 \item In optoelectronics much attention is focused on the phenomenon of guided waves, governed by the Maxwell system ~\cite{Cherin:1,Marcuse}. In fact,  in the $``TE \,\,\& \,TM''$ framework the second-order equation for the amplitudes of eigenfunctions  ~\cite[Equation (13)]{BBH:1} is equivalent to our equation for the amplitude (see below Equation ~\eqref{equation-introduction:1}).

 \item The system of linear elasticity in the half-space $\Omega=\mathbb{R}^{n}\times(0,+\infty)$ subject to free surface condition. It gives rise to the  Rayleigh surface wave, that is particularly destructive in the case of an earthquake, see ~\cite{DerGui:1,Schu:1,Tolstoy:1}.  Related phenomena where studied by physicists such as Lamb, Love and Stoneley. Refer also to ~\cite{Cristofol:1} and references therein.
\end{itemize}

The terms \textit{concentration} and \textit{non-concentration} do not always carry the same meaning when used by various authors. The following definition clarifies their meanings in this paper.

For an open set $\omega\subseteq\Omega$ and $v\in L^2(\Omega),$ define

 \be\label{eqdefRw}R_{\omega}(v) = \frac{\Vert v\Vert^{2}_{L^{2}(\omega)}}{\Vert v\Vert^{2}_{L^{2}(\Omega)}}.\ee
\begin{defn}
\label{def-concentration:1}
If $\set{v_j}_{j=1}^\infty\subseteq L^2(\Omega)$ is a sequence of normalized eigenfunctions associated with an increasing sequence of eigenvalues and
\be\label{eqconcentr} \lim\limits_{ j\to\infty}R_{\omega}(v_j)=0\ee
then we say that $\set{v_j}_{j=1}^\infty$ \textbf{concentrates} in $\Omega\setminus\omega.$

On the other hand, if
\be\label{eqnonconcentr}
\liminf\limits_{ j\to\infty}R_{\omega}(v_j)>0,\quad \forall\omega\subseteq\Omega,
\ee
  then the sequence is \textbf{non-concentrating.}
 
\end{defn}
\begin{rem}
\label{rem-def-concentration:1}
    Later on we shall extend these notions also to sets of eigenfunctions that are not necessarily arranged as such sequences. Note that we study concentration and non-concentration for {\upshape{infinite subsets of eigenfunctions} } and not necessarily for the {\underline{whole set} } of eigenfunctions.
\end{rem}

In general, the occurrence of concentration phenomena for second-order  operators of the types ~\eqref{eqoperator} depends  on two features:
\begin{itemize}
\item The shape of the boundary $\pa\Omega.$
 \item The geometric properties of the diffusion coefficient  $\tilde{c}(x).$
 \end{itemize}
 The literature concerning the \textit{concentration/non-concentration phenomena} as related to the shape of $\Omega$ is very extensive. A well-known aspect is the connection of ``quantum ergodicity'' to ``classically chaotic systems'' ~\cite{BuZu:1,BuZw:1,Can-Galko:1,H-H-Marz:1,Marz:1} and references therein.  The paper ~\cite{grebenkov-1} deals with spherical and elliptical domains.

 In contrast,  in this paper we are interested in the effects of the layered medium.
   Thus it is more closely related to the study of operators of the type $L=-\nabla\cdot(c\nabla)+V$ on a finite domain, where the potential $V(x)\geq 0$ is positive on a subset of positive measure. Typically, eigenfunctions associated with eigenvalues below $ess\,sup\,V(x)$ are concentrating. In ~\cite{AFM:1} the authors replace $V$ by an \textit{effective potential} $u(x)$ satisfying $Lu=1.$ They show concentration and exponential decay of eigenfunctions as derived from the geometry of $u.$ Our operator $A$ ~\eqref{eqoperator} does not involve a potential but the concentration of suitable sequences of eigenfunctions results from the geometry of the diffusion coefficient. As we shall see in Theorem ~\ref{theo-general-c-:1} below there is a strong underlying geometric aspect;  the concentration expresses the fact that the masses of eigenfunctions ``flow'' (as the eigenvalues increase) into the ``wells'' (or ``valleys'').

 Turning to the {\underline{non-concentration case}}, we observe that the existing literature is less extensive, perhaps due to the fact that it is not directly related to physical or industrial applications. Nevertheless we shall see that  it leads to some interesting mathematical questions concerning the structure and asymptotics of  eigenfunctions (typically associated with large eigenvalues). Recent publications in this direction are ~\cite{H-Marz:1} dealing with non-concentration in partially rectangular billiards and ~\cite{Christ-Toth:1} concerning piecewise smooth planar domains.  A  non-concentration result in a stricter sense is that ``almost all eigenfunctions of a rational polygon are uniformly distributed'' ~\cite{rudnick}. Estimates for nodal sets such as ~\cite{DoFe:1}  were extended in ~\cite{JL:1,LaLe:1} motivated by questions from control theory and ~\cite{LLP:1} that deals with non-concentration in the Sturm-Liouville theory. Note that in the 1-D case issues of non-concentration are closely related to details of oscillatory solutions in the Sturm-Liouville theory. We shall come back to it later in this introduction.


This paper deals with  both  concentration and non-concentration phenomena  for eigenfunctions of layered operators. As already pointed out  the  latter is less studied in the literature, especially when the diffusion coefficient $c(y)$ is not regular (even discontinuous). As a result, the non-concentration case plays a greater role in this paper.  For such eigenfunctions we extend the scope of the study; not only facts pertaining to non-concentration but a more detailed study of the structure of the solutions in terms of the oscillatory character, amplitudes and their ratios and asymptotic behavior. In contrast to the concentrating case, we shall see that the essential features of the non-concentrating solutions depend primarily  on the maximum and minimum of $\tilde{c}(x) = \tilde{c}(x',y) = c(y)$ and, going deeper into the structures, on the \textit{total variation} of $c(y).$  Our main tool will be the  \textbf{minimal amplitude hypothesis} (see Definition ~\ref{definition-hypo-minam:1}), applied to \textit{families} of diffusion coefficients.

The paper is organized as follows.

In Section ~\ref{secsetup} we introduce all relevant notations and details concerning the functional setting.  In our case, the eigenvalues are classified by a double-index enumeration, with a conic sector (in index space {$(\mu_k^2,\,\lambda)$, see Figure \ref{image3}}) distinguishing eigenvalues (see ~\eqref{eqeigenconcentrate})  associated with concentrating eigenfunctions ($\mathfrak{F}_{G}$) from those (see ~\eqref{eqAceps})  associated with non-concentrating  eigenfunctions ($\mathfrak{F}_{NG}$). This curve serves as the analog to the maximal value of a perturbation potential that separates concentrating from non-concentrating eigenfunctions in the potential perturbation framework.

\begin{itemize}
\item Our main result for the concentrating case ($\mathfrak{F}_{G}$) is stated in  Theorem  ~\ref{theo-general-c-:1}.  In particular, it yields exponential decay of the eigenfunctions outside the concentration layers.




 \item In order to deal with  non-concentration of certain families of eigenfunctions ($\mathfrak{F}_{NG}$) we introduce
   the aforementioned \textbf{minimal amplitude hypothesis}.  This hypothesis is a geometric assumption on the asymptotic behavior  of the amplitudes in the $(u,\,u') $ phase plane.  The non-concentration of sets of oscillatory solutions follows directly from the geometric assumption (Theorem  ~\ref{theo-C1-noguided}).
   \end{itemize}

Section \ref{section-guidedwaves:1} deals with guided waves for $A = -\tilde{c}\Delta.$
 The main result Theorem \ref{theo-general-c-:1} is proved  and, on the way, we prove the existence of sequences of eigenvalues satisfying the hypotheses of this theorem (see condition ~\eqref{assumption:1}).  The exponential decay of eigenfunctions is derived from sharp estimates of the Green function.

  In Section ~\ref{section-nonguided waves:2} we turn to the case of non-concentrating eigenfunctions  (\textit{non-guided waves} in  the physical literature) for $A = -\tilde{c}\Delta.$ The set of corresponding eigenvalues is $ \mathscr{A}^{c}_\eps$ (see Definition ~\ref{defn-Ac}) that are located in the aforementioned upper conic sector in the index grid.

  The first approach that comes to mind is to transform the problem to a \textit{canonical form}. In other words, to use coordinate transformations so that the diffusion coefficient becomes a ``manageable'' perturbation of a constant one. In fact, this is done in Subsection ~\ref{section-Liouville-ng:1} under the assumption that $c(y)\in C^{2}(\lbrack 0,H\rbrack).$ In this case the classical Liouville transformation can be invoked, leading to a detailed asymptotic (almost sinusoidal) behavior of the non-concentrating eigenfunctions.

   Once the diffusion coefficient $c(y)$ is less regular, establishing non-concentration becomes considerably more delicate since the classical asymptotic methods are not applicable.  Thus, in the rest of Section ~\ref{section-nonguided waves:2} we focus on proving the minimal amplitude hypothesis that implies Theorem ~\ref{theo-C1-noguided}. Furthermore, the hypothesis is established simultaneously for a full family $\mathscr{K}$ of coefficients (see ~\eqref{eq-def-K}). It underlines the fact that only the extremal values of $c(y)$ come into play for Lipschitz continuous or monotone diffusion coefficients. We exploit  different methods in handling various classes of functions $c(y),$ such as Lipschitz functions in Subsubsection ~\ref{subsublip} or monotone functions in Subsubsection ~\ref{subsubmonotone}.  In each case, additional properties of the solutions are obtained, such as given in Corollary ~\ref{corunifcmonot} for the case of monotone coefficients. The ultimate case where we were able to establish the minimal amplitude hypothesis is for $c(y)$ being of \textbf{bounded total variation.} As a result non-concentration is shown to hold \textit{simultaneously} for the full family of diffusion coefficients of total variation $TV(c)$ below a fixed $V.$ More specifically we get

   \begin{thma}
   \textit{
    Fix $0<c_m<c_M,\,\,\eps>0,\,\,V>0.$ Let
      $$\mathscr{K}_V=\set{c(y),\,\,c_m\leq c(y)\leq c_M,\,\,0<y<H,\,\,\,TV(c)\leq V}.$$
            Consider (for every $c(y)\in\mathscr{K}_V$) the subset of eigenvalues $ \mathscr{A}^{c}_\eps$ (see \eqref{eqAceps}) and the associated eigenfunctions $\set{v_\lambda}.$
      \\
             For an interval $(a,b)\subseteq (0,H) $ let $\omega:=\omega'\times(a,b)\subseteq\Omega,$ where $\omega'\subseteq\Omega'$ is an open set.
             \\
             If $\omega'\neq\Omega'$ assume that the family $\set{\phi_k(x')}_{k=1}^\infty$ of eigenfunctions of the Laplacian in $\Omega'\subseteq\RR^d$ does not concentrate in $\Omega'\setminus\omega'.$
             \\
             Then there exists $\mathfrak{f}_\omega>0$ such that
      \be\label{eqnoncontrateab}
      \mathfrak{f}_\omega\leq\frac{\Vert v_{\lambda}\Vert_{L^{2}(\omega)}}{\Vert v_{\lambda}\Vert_{L^{2}(\Omega)}}\leq 1
      \ee
      uniformly for all $c(y)\in\mathscr{K}_V$ and all eigenvalues in $ \mathscr{A}^{c}_\eps.$
      }
      \end{thma}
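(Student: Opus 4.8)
The plan is to exploit the tensor-product structure forced by the layered medium and then to feed in the two facts already available: the non-concentration hypothesis on the transverse eigenfunctions $\set{\phi_k(x')}$ and the minimal amplitude hypothesis for coefficients of bounded variation. Because $\tilde c(x)=c(y)$ depends on the single variable $y$, the reduction recalled in Remark~\ref{reminfinitekl} shows that each eigenfunction attached to an eigenvalue in $\mathscr{A}^{c}_\eps$ is a pure product $v_\lambda(x',y)=\phi_k(x')\,w(y)$, where $-\Delta_{x'}\phi_k=\mu_k^2\phi_k$ in $\Omega'$ and $w$ solves the one-dimensional profile equation \eqref{equation-introduction:1} with coefficient $\frac{\lambda}{c(y)}-\mu_k^2$. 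Since $R_\omega$ in \eqref{eqdefRw} is formed in the unweighted $L^2$ spaces and $\omega=\omega'\times(a,b)$ is itself a product, Fubini factorizes the ratio:
\be\label{eqfactorize}
\frac{\Vert v_\lambda\Vert^2_{L^2(\omega)}}{\Vert v_\lambda\Vert^2_{L^2(\Omega)}}
=\frac{\Vert\phi_k\Vert^2_{L^2(\omega')}}{\Vert\phi_k\Vert^2_{L^2(\Omega')}}\cdot
\frac{\Vert w\Vert^2_{L^2(a,b)}}{\Vert w\Vert^2_{L^2(0,H)}}.
\ee
It then suffices to bound each factor from below uniformly, and the constant $\mathfrak f_\omega$ will be the product of the two bounds.

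For the transverse factor I would argue as follows. If $\omega'=\Omega'$ it equals $1$. Otherwise set $\delta_{\omega'}:=\inf_{k\geq1}R_{\omega'}(\phi_k)$. The non-concentration assumption gives $\liminf_k R_{\omega'}(\phi_k)>0$, so $R_{\omega'}(\phi_k)\geq\delta>0$ for all but finitely many $k$; each of the remaining finitely many is itself strictly positive, since a Dirichlet eigenfunction of $-\Delta$ cannot vanish on the nonempty open set $\omega'$. Hence $\delta_{\omega'}>0$, and it depends only on $\omega'$ and the fixed transverse basis $\set{\phi_k}$ — in particular it is independent of $c$ and of $\lambda$.

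The heart of the matter, and the step I expect to be the main obstacle, is the uniform lower bound on the longitudinal factor,
\be\label{eqlong}
\frac{\Vert w\Vert^2_{L^2(a,b)}}{\Vert w\Vert^2_{L^2(0,H)}}\geq\mathfrak g_{(a,b)}>0,
\ee
required to hold simultaneously for every $c\in\mathscr{K}_V$ (see \eqref{eq-def-K}) and every $(\mu_k^2,\lambda)$ in the conic sector defining $\mathscr{A}^c_\eps$ (see \eqref{eqAceps}). Throughout that sector the profile coefficient satisfies $\frac{\lambda}{c(y)}-\mu_k^2\geq\frac{\lambda}{c_M}-\mu_k^2>0$, so $w$ oscillates on all of $(0,H)$ and the sole mechanism that could produce concentration away from $(a,b)$ is a collapse of the oscillation amplitude between two consecutive zeros. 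Preventing exactly this is the role of the minimal amplitude hypothesis (Definition~\ref{definition-hypo-minam:1}). I would therefore invoke the form of that hypothesis proved in this section for coefficients of bounded variation — whose constants depend only on $c_m,c_M,V,\eps$ and $H$, not on the individual $c$ — to bound the amplitude of $w$ from above and below on every fixed subinterval, so that $\Vert w\Vert^2_{L^2(a,b)}$ and $\Vert w\Vert^2_{L^2(0,H)}$ become comparable with a ratio $\mathfrak g_{(a,b)}$ depending only on $b-a$, $H$ and the family parameters. Equivalently one reads \eqref{eqlong} directly off Theorem~\ref{theo-C1-noguided}, which turns the minimal amplitude hypothesis into non-concentration of the one-dimensional profiles. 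The genuine difficulty here is the uniformity: the bound must survive not only the limit of large eigenvalues but the entire infinite family $\mathscr{K}_V$ at once, and it is precisely the control of $TV(c)\leq V$ that delivers constants free of the individual coefficient.

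Combining the two estimates in \eqref{eqfactorize} gives the assertion with $\mathfrak f_\omega:=\delta_{\omega'}\,\mathfrak g_{(a,b)}$ (and $\mathfrak f_\omega=\mathfrak g_{(a,b)}$ when $\omega'=\Omega'$), uniformly over $\mathscr{K}_V$ and over $\mathscr{A}^c_\eps$; the upper bound $\frac{\Vert v_\lambda\Vert_{L^2(\omega)}}{\Vert v_\lambda\Vert_{L^2(\Omega)}}\leq1$ is immediate from $\omega\subseteq\Omega$. In short, the separation of variables and the transverse estimate are soft, and all the real content is carried by the uniform minimal amplitude hypothesis in the bounded-variation class.
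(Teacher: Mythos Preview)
Your proposal matches the paper's approach: factor $R_\omega(v_\lambda)$ as a product of the transverse ratio $R_{\omega'}(\phi_k)$ and the longitudinal ratio $\|w\|^2_{L^2(a,b)}/\|w\|^2_{L^2(0,H)}$, bound the former via the non-concentration hypothesis on $\set{\phi_k}$, and bound the latter via the uniform minimal amplitude hypothesis on $\mathscr{K}_V$ (equation~\eqref{equation-definition-cybv-unif} in Theorem~\ref{prop-minam:1-new}) fed into Theorem~\ref{propcyirreg}. The high-level plan and the identification of the key ingredient are both correct.

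There is, however, one step you glide over that the paper treats explicitly. Theorem~\ref{propcyirreg} (hence Theorem~\ref{theo-C1-noguided}) delivers the longitudinal lower bound only for $\lambda>\lambda_0$, where $\lambda_0$ depends on $\eps,c_m,c_M,b-a$. For a \emph{single} $c$ the finitely many excluded eigenfunctions with $\lambda\le\lambda_0$ are handled trivially (each has positive mass on $(a,b)$), and this is enough for Theorem~\ref{theo-C1-noguided} as stated. But you need uniformity over all of $\mathscr{K}_V$: those ``low'' eigenfunctions vary with $c$, and nothing so far prevents their mass on $(a,b)$ from shrinking to zero along a sequence $c_n\in\mathscr{K}_V$. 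The paper closes this via a separate compactness argument at the end of the proof of Theorem~\ref{prop-minam:1-new}: if $\int_a^b u_n^2\,c_n^{-1}dy\to0$ with $\lambda_n\le\lambda_0$, the uniform bound on the coefficients $\frac{\lambda_n}{c_n(y)}-\mu_{k_n}^2$ gives an $H^2$ bound on $u_n$, Sobolev embedding forces $u_n^2+u_n'^2\to0$ on $(a,b)$, and this contradicts the uniform minimal amplitude $\mathfrak r_V>0$. You should add this step; without it the uniformity claim over $\mathscr{K}_V$ is not justified for the full spectral set $\mathscr{A}^c_\eps$.
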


      This theorem will be proved as part of the more detailed Theorem ~\ref{prop-minam:1-new}.
        \begin{rem}\label{rem-uniform}
        The uniformity statement in $\mathscr{K}_V$ is relevant for physical applications, where the coefficient $c(y)$ is only approximately known.
        \end{rem}

    Remark that   the case of a continuous $c(y),$  but  not of bounded variation,  remains an open problem, whence the following question arises naturally:
\begin{framed}
\textbf{What degree of regularity of $c(y)$ could serve as necessary and sufficient in order to satisfy the minimal amplitude hypothesis (Definition ~\ref{definition-hypo-minam:1})?}
\end{framed}

 As already mentioned, the model of piecewise constant coefficients is prevalent in the physical and engineering literature. We have therefore chosen to include  Section ~\ref{section-piecewiseconstant:1} where we treat in a \textit{self-contained way} the case of a piecewise constant diffusion coefficient $c(y)$ in both guided and non-guided cases.  In this treatment we implement more explicitly some tools that appear frequently in the physical literature, such as detailed expressions for the solutions in layers and their \textit{transmission relations} across layers.  In fact some estimates obtained here are sharper than those derived in Sections ~\ref{section-guidedwaves:1} and ~\ref{section-nonguided waves:2}.

%

%
%

    Appendix \ref{appendix-auxiliary-results-pwc:1} is added for auxiliary results.

In a subsequent paper we shall deal with the concentration and non-concentration issues for operators in divergence form.

\section{\textbf{SETUP AND MAIN RESULTS}}\label{secsetup}

Recall (~\eqref{eqomega}) that  $\Omega:= \Omega'\times(0,H).$  The coordinates in $\Omega$ are designated as $x=(x',y)\in\Omega'\times (0,H).$ We introduce a diffusion coefficient $(x',y)\to \tilde{c}(x',y)$ such that $\tilde{c}(x',y) = c(y)$ for all $x'\in \Omega'$ and of which we shall assume at least the following
 \begin{equation}\label{eqassumptioncy}
\mbox{\bf(H)}\quad 0<c(y)\in L^{\infty}(\lbrack 0,H\rbrack), 0<c_{m}={\rm ess\, inf}\{c(y), y\in \lbrack 0,H\rbrack\}<c_{M}= {\rm ess\,sup}\{c(y), y\in \lbrack 0,H\rbrack\}.
 \end{equation}

We focus on the operator $A= -\tilde{c}\,\Delta.$   For the Laplacian $-\Delta_{x'}$ acting in $L^{2}(\Omega')$ with domain $H^2(\Omega')\cap H^1_0(\Omega')$, we denote by $\set{(\mu_k^2,\phi_k)}_{k\geq1}$ the sequence of pairs (nondecreasing sequence of eigenvalues counting multiplicity, normalized eigenfunctions).  As the coefficient  function $\tilde{c}(x',y) = c(y)$ depends only on the last coordinate $y,$  a separation of coordinates  is natural. Using spectral decomposition in the $x'-$coordinate the operator $A:=-\tilde{c}\,\Delta$ is unitarily equivalent to a direct sum of reduced operators in the form
\begin{equation}
\label{equation-spectralstructure:1}\
-\tilde{c}\,\Delta\approx \sum_{k\in\mathbb{N}^*}\bigoplus A_{k}\quad \mbox{ acting in }\sum_{k\in \mathbb{N}^*}\bigoplus L^{2}((0,H), c(y)^{-1}dy)
\end{equation}
\be\label{eqAkofy}A_{k} :=c(y)\big( \mu_{k}^{2} - \frac{d^{2}}{dy^{2}}\big), k = 1,2,\ldots,\mbox{ with }D(A_k)=H^2(0,H)\cap H^1_0(0,H).
\ee
 The eigenvalues of $A$ are ordered by a two-index system, namely $\sigma(A)=\{\beta_{k,\ell}, k,\ell\geq 1\}$ where $\Lambda_{k} = \{\beta_{k,1},\beta_{k,2}, \ldots\}$ is the increasing sequence  of the eigenvalues of $A_{k}.$
In others words, for each eigenvalue  $\lambda$  of $A$ there exists at least one $k\in \mathbb N^*$ such that $\lambda$ is a simple eigenvalue of $A_k,$ whence there exists at least a pair $(k,\ell)\in \mathbb N^*\times \mathbb N^*$ such that $\lambda =\beta_{k,\ell}$ (there is a one-to-one  relationship between the pairs $(k,\ell)$ and $(\lambda, k)$).
 Note  that in general if $\lambda$ is not a simple eigenvalue, there is a finite number of pairs $(k,\ell)$ such that $\lambda=\beta_{k,\ell}.$

We construct an orthonormal basis  of eigenfunctions $\mathcal{B}=\set{v_{k,\ell}}_{k\geq 1,\ell\geq 1}$  associated with the eigenvalues $\beta_{k,\ell}.$  They are given by  $v_{k,\ell}(x',y)= \phi_{k}(x')u_{k,\ell}(y) $ where $u_{k,\ell}(y)$ satisfies
\begin{equation}
\label{equation-introduction:1}
 c(y)u_{k,\ell}'' + (\beta_{k,\ell}-c(y)\mu^{2}_{k})u_{k,\ell}= 0, \quad u_{k,\ell}(0) = u_{k,\ell}(H) = 0.
\end{equation}
\begin{rem}\label{reminfinitekl}
As is typical in ``separation of variables'' situations, the study of the spectral properties of the partial differential operator $A$ is carried out by controlling the behavior of the infinite set of ordinary differential operators of the type ~\eqref{equation-introduction:1}.
\end{rem}

\textbf{Henceforth we use the notation $u_{\lambda,k}$ instead of $u_{k,\ell}.$}
\\
 We often write $v_{\lambda}$ instead of $v_{k,\ell}.$
\begin{equation}
\label{equation-introduction:1bis}
\lambda = \beta_{k,\ell}\Longrightarrow v_{\lambda}(x',y)= v_{k,\ell}(x',y) = \phi_k(x')u_{\lambda,k}(y),\quad u_{\lambda,k}(y) \,\,\mbox{normalized in}\,\,L^{2}((0,H), c(y)^{-1}dy).
\end{equation}
In this paper we are primarily interested in the phenomena of concentration or non-concentration of the mass of eigenfunctions.
\begin{defn}
\label{defn-layer:1}
For $a<b,$ a layer of $\Omega$ will be noted $\Omega_{a,b}:=\Omega'\times( a,b)\subseteq\Omega.$
\end{defn}
 $\bullet$ {\textbf{ ON THE CONCENTRATION}}\\
 \vskip.5cm
\begin{minipage}[t]{168mm}
 \begin{wrapfigure}{r}{7cm}
\vskip-4.2cm
\includegraphics[scale=.33]{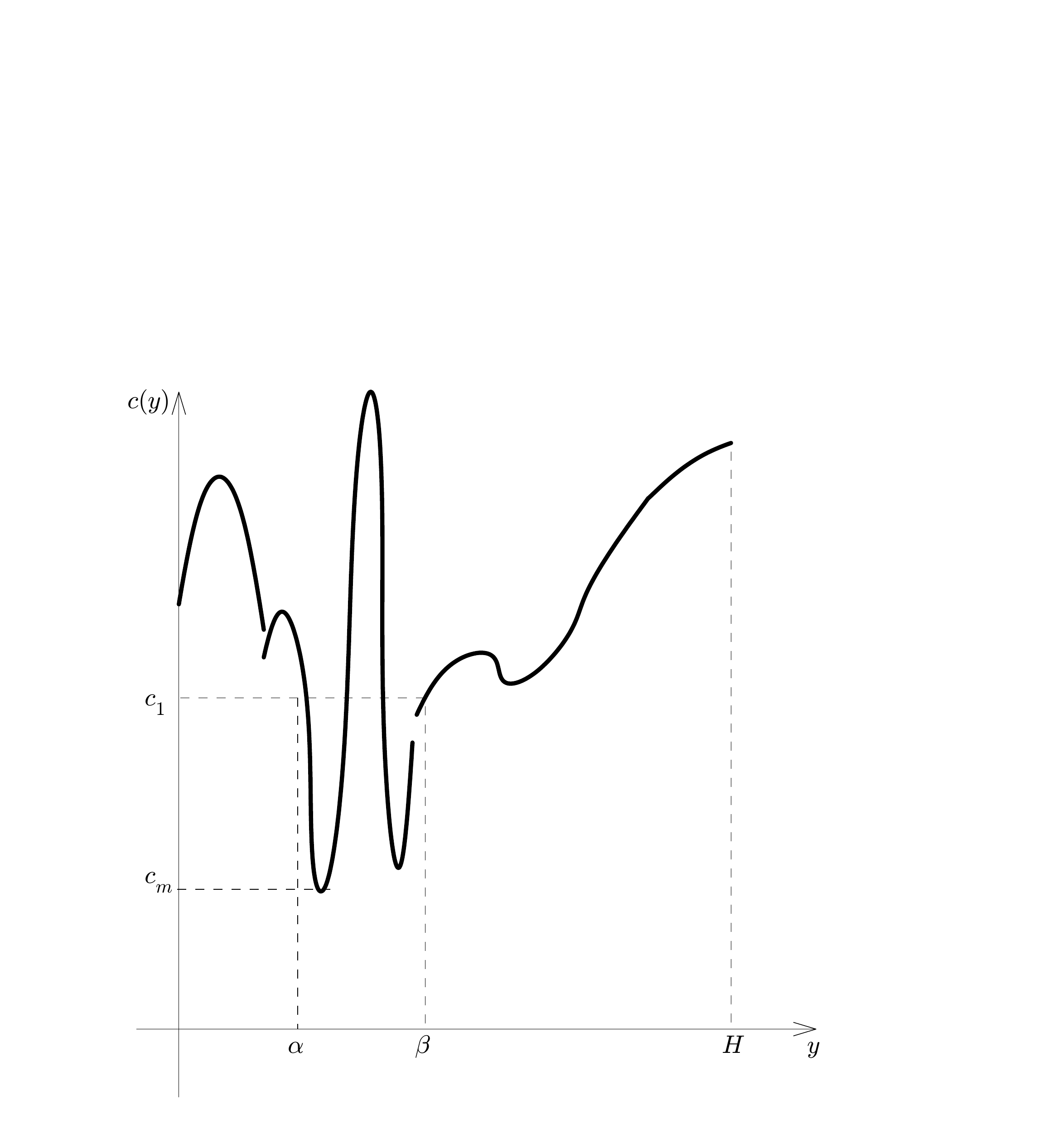}\vskip-1cm\caption{\label{image1}}
\end{wrapfigure}
\vskip-.5cm
\noindent
 \begin{defn}
\label{def-well:1}$\empty$\\
Let $\omega=\Omega_{\alpha,\beta}$ be a layer of $\Omega.$ We say that $\omega$ is a {\bf well for the profile} $c(y)$ if there exists $c_{1}>0$ such that
\begin{equation}
\label{cge:2}
\!\!\!\!\!\!\!\!\!\!\!\!\!\!\!\!\!\!\!\!\!\!\!\!\!\!\!\!\!\!\!\!\!\!\!\!\!\!\!\!\!\!\!\!\!\!\!\!\!\!\!\!\!\!\left\{\begin{array}{ll}
0<c_m
< c_1,\\
 c(y)\geq c_1>0,\quad a.e.\;y\in (0,H)\setminus (\alpha,\beta),
\end{array}
 \right.\end{equation}
 (See Figures ~\ref{image1} and \ref{image2}).
\end{defn}
\end{minipage}
\vskip1.2cm
  In the concentration case we have the following theorem, which yields exponential decay outside a well.   The proof is given in Section \ref{section-guidedwaves:1}. Observe that the only hypothesis imposed on $c(y)$ is \eqref{eqassumptioncy}.
\begin{thm}[Concentration in the layer $\Omega_{\alpha,\beta}.$]
\label{theo-general-c-:1}
 Let $\Omega_{\alpha,\beta}$ be a well for the profile $c$ and  $\lambda$ is an eigenvalue of  $A_{k}$ (hence of $A$) such that
  \be\label{assumption:1} \xi^{2} = \xi^{2}_{\lambda,k}:=  2(\mu_k^2-\frac \lambda{c_1})>0.\ee
  Let $\overline{\Omega_{a,b}}\subset\bar{\Omega}\setminus\overline{\Omega_{\alpha,\beta}}$ and denote by
 ${\rm d}$ the distance between $\overline{\Omega_{a,b}}$ and $\Omega_{\alpha,\beta}$, then
\begin{equation}\label{v8G}
\int_{\Omega_{a,b}}v_{\lambda}^2(x)\,dx\leq \underset{y\in(\alpha,\beta)}{\sup} \left\vert \frac{c_{1}-c(y)}{c_{1}c(y)}\right\vert \frac{\lambda}{\xi^3}  \frac{e^{-\xi (1-t){\rm d}}}{(1-t){\rm d}}\int_{\Omega_{\alpha,\beta}}v_{\lambda}^2(x)\,dx, \mbox{ for any }0<t<1.
\end{equation}
Consequently, if
\be\label{eqeigenconcentrate}c_m\,\mu_{k}^{2}\leq \lambda\leq (c_{1}-\varepsilon)\mu_{k}^{2}
\ee
 and if we normalize, i.e. $
\int_{\Omega} \vert v_{\lambda}(x)\vert^{2}\, \tilde{c}(x)^{-1}dx = 1,$  then
\begin{equation}
\label{equation-decreasing:1}
\underset{\lambda\to\infty}{\lim}\int_{\Omega_{a,b}}\vert v_{\lambda}(x)\vert^{2}\, dx = 0.
\end{equation}
\end{thm}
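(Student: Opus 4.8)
The plan is to reduce everything to a one–dimensional estimate for the profile $u_{\lambda,k}$ and then to read off \eqref{v8G} from a sharp bound on a carefully chosen Green function. Since $\phi_k$ is normalized in $L^2(\Omega')$ and the layers factor as products, one has $\int_{\Omega_{a,b}}v_\lambda^2\,dx=\int_a^b u_{\lambda,k}^2\,dy$ and likewise over $(\alpha,\beta)$, so \eqref{v8G} is equivalent to a purely 1D inequality for the solution $u:=u_{\lambda,k}$ of \eqref{equation-introduction:1}. Writing that ODE as $-u''+q\,u=0$ with $q(y)=\mu_k^2-\lambda/c(y)$, the decisive observation is that on the complement of the well, where $c(y)\ge c_1$, one has $q(y)\ge \mu_k^2-\lambda/c_1=\xi^2/2>0$ by \eqref{assumption:1} (note $\lambda>0$ since $A_k>0$). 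Thus the effective potential is bounded below by the positive constant $\xi^2/2$ precisely on the region separating $(a,b)$ from the well, which is the source of all the decay.

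The next step, and the conceptual heart of the argument, is to localize the source term to the well. I would introduce the comparison operator $\tilde L=-d^2/dy^2+\tilde q$ on $(0,H)$ with Dirichlet conditions, where $\tilde q(y)=q(y)$ off the well and $\tilde q(y)\equiv \xi^2/2$ on $(\alpha,\beta)$. Then $\tilde q\ge \xi^2/2>0$ everywhere, so $\tilde L$ is invertible with $\|\tilde L^{-1}\|\le 2/\xi^2$. The point of flattening the potential to the threshold only inside the well is that $\tilde L u=(\tilde q-q)u$ vanishes outside $(\alpha,\beta)$ and equals $\lambda\frac{c_1-c}{c_1c}\,u$ on it; setting $V(y):=\lambda\frac{c_1-c(y)}{c_1c(y)}$, one gets the Lippmann--Schwinger representation $u(y)=\int_\alpha^\beta \tilde G(y,s)\,V(s)\,u(s)\,ds$ with source supported exactly in the well. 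This is what produces the factor $\sup_{(\alpha,\beta)}\bigl|\frac{c_1-c}{c_1c}\bigr|$ in \eqref{v8G}.

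It then remains to establish the exponential off–diagonal decay of $\tilde G$ and to assemble the bound; this is the step I expect to be the main obstacle, since it is where the sharp constant lives. Because $\tilde q\ge \xi^2/2$ on the corridor of width ${\rm d}$ between $(\alpha,\beta)$ and $(a,b)$, a Combes--Thomas weighted–energy conjugation by $e^{\rho\psi}$, with $\psi$ the distance to the well and $\rho<\xi/\sqrt2$, yields $\|\chi_{(a,b)}\tilde L^{-1}\chi_{(\alpha,\beta)}\|\lesssim (\xi^2/2-\rho^2)^{-1}e^{-\rho {\rm d}}$; equivalently the explicit $\sinh$ Green function of $-d^2/dy^2+\xi^2/2$ obeys $0\le \tilde G(y,s)\le \frac{1}{\sqrt2\,\xi}\,e^{-\frac{\xi}{\sqrt2}|y-s|}$. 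Feeding this into the representation, estimating $|V|\le \lambda\sup_{(\alpha,\beta)}\bigl|\frac{c_1-c}{c_1c}\bigr|$, applying Cauchy--Schwarz in $s$ and integrating the residual exponential in $y$ across $(a,b)$, one produces the combination $\frac{\lambda}{\xi^3}\,\frac{e^{-\xi(1-t){\rm d}}}{(1-t){\rm d}}$: the $\xi^{-2}$ comes from the resolvent prefactor, one further $\xi^{-1}$ from integrating the kernel, and the free parameter $t\in(0,1)$ is precisely the device that trades the decay rate $\rho$ against the prefactor $(\xi^2/2-\rho^2)^{-1}$. I expect the qualitative decay to be immediate from $\tilde q\ge\xi^2/2$, with the delicate point being to match the stated exponent and the exact $1/\xi^3$ power.

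Finally, \eqref{equation-decreasing:1} follows from \eqref{v8G} by a routine analysis of the constants under the band condition \eqref{eqeigenconcentrate}. The inequalities $c_m\mu_k^2\le\lambda\le(c_1-\varepsilon)\mu_k^2$ force $\mu_k^2\ge \lambda/(c_1-\varepsilon)\to\infty$ and $\xi^2=2(\mu_k^2-\lambda/c_1)\ge \tfrac{2\varepsilon}{c_1}\mu_k^2\to\infty$ as $\lambda\to\infty$, whence $e^{-\xi(1-t){\rm d}}\to0$ and $\frac{\lambda}{\xi^3}\le C/\xi\to0$. Since the normalization $\int_\Omega|v_\lambda|^2 c^{-1}dx=1$ together with $c\le c_M$ gives $\int_{\Omega_{\alpha,\beta}}v_\lambda^2\le \int_\Omega v_\lambda^2\le c_M$, the right–hand side of \eqref{v8G} tends to $0$, which is \eqref{equation-decreasing:1}.
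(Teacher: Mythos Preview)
Your approach is in the right spirit and would deliver the qualitative concentration statement \eqref{equation-decreasing:1}, but it does not reproduce the precise inequality \eqref{v8G}, and the place where you assert that it does is where the gap lies. Working with $u$ via the Lippmann--Schwinger identity $u(y)=\int_\alpha^\beta \tilde G(y,s)V(s)u(s)\,ds$ with $|V|\le \lambda\sup_{(\alpha,\beta)}\bigl|\tfrac{c_1-c}{c_1c}\bigr|$ and then passing to $u^2$ (whether by Cauchy--Schwarz in $s$ or by squaring an $L^2\to L^2$ Combes--Thomas bound) inevitably produces the \emph{square} of $\lambda\sup|\cdot|$ and of the resolvent prefactor. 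One obtains an estimate of the type
\[
\int_a^b u^2\;\le\;C\,\lambda^2\Big(\sup_{(\alpha,\beta)}\Big|\tfrac{c_1-c}{c_1c}\Big|\Big)^2\,\xi^{-4}\,e^{-2\rho{\rm d}}\int_\alpha^\beta u^2,
\]
not $\lambda\sup|\cdot|\,\xi^{-3}$ as in \eqref{v8G}. Your accounting ``$\xi^{-2}$ from the resolvent, one more $\xi^{-1}$ from integrating the kernel'' therefore does not match what your scheme actually yields. (Incidentally, your exponential rate would come out as $2\rho$ with $\rho<\xi/\sqrt2$, hence close to $\sqrt2\,\xi$, which is \emph{better} than the paper's $\xi$; so the qualitative conclusion survives, but the stated form does not.)

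The paper avoids this squaring loss by a different device: it works from the start with $w:=u_{\lambda,k}^2$, which satisfies the \emph{linear} equation $w''-\xi^2 w=-f+g$ with $f(y)=2\lambda\tfrac{c_1-c(y)}{c_1c(y)}\,w(y)$ supported (as a nonnegative term) in the well and $g=2(u_{\lambda,k}')^2\ge 0$. Using the explicit nonpositive Green kernel of the constant-coefficient Dirichlet operator $w\mapsto w''-\xi^2 w$, the $g$-term can be dropped, giving a pointwise upper bound $w(y)\le \sup_{(\alpha,\beta)}\bigl|\tfrac{c_1-c}{c_1c}\bigr|\,\tfrac{\lambda}{\xi}\,e^{-\xi\,d(y,(\alpha,\beta))}\int_\alpha^\beta w$ that is already \emph{linear} in $\lambda\sup|\cdot|$. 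A separate pointwise \emph{lower} bound $w(\gamma)\ge \xi^2(1-t){\rm d}\int_a^b w$, obtained by integrating the equation for $w$ twice from the Dirichlet endpoint (using $w(H)=w'(H)=0$, $g\ge0$, and $f\le0$ outside the well), then converts this into the $L^2$ comparison \eqref{v8G} with the factor $\lambda/\xi^3$ and $((1-t){\rm d})^{-1}$. This two-step ``upper bound at an intermediate point / lower bound at the same point'' structure, applied to $u^2$ rather than $u$, is exactly what generates the first powers in \eqref{v8G} and is the ingredient missing from your outline. Your last paragraph on \eqref{equation-decreasing:1} is correct once one has any bound with a prefactor polynomial in $\lambda,\xi$ and an exponential $e^{-c\xi{\rm d}}$.
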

Observe that the assumption ~\eqref{eqeigenconcentrate} guarantees that $\xi\xrightarrow[\lambda\to\infty]{} \infty.$\\
\begin{figure}[h]
\centering
\includegraphics[scale=.25]{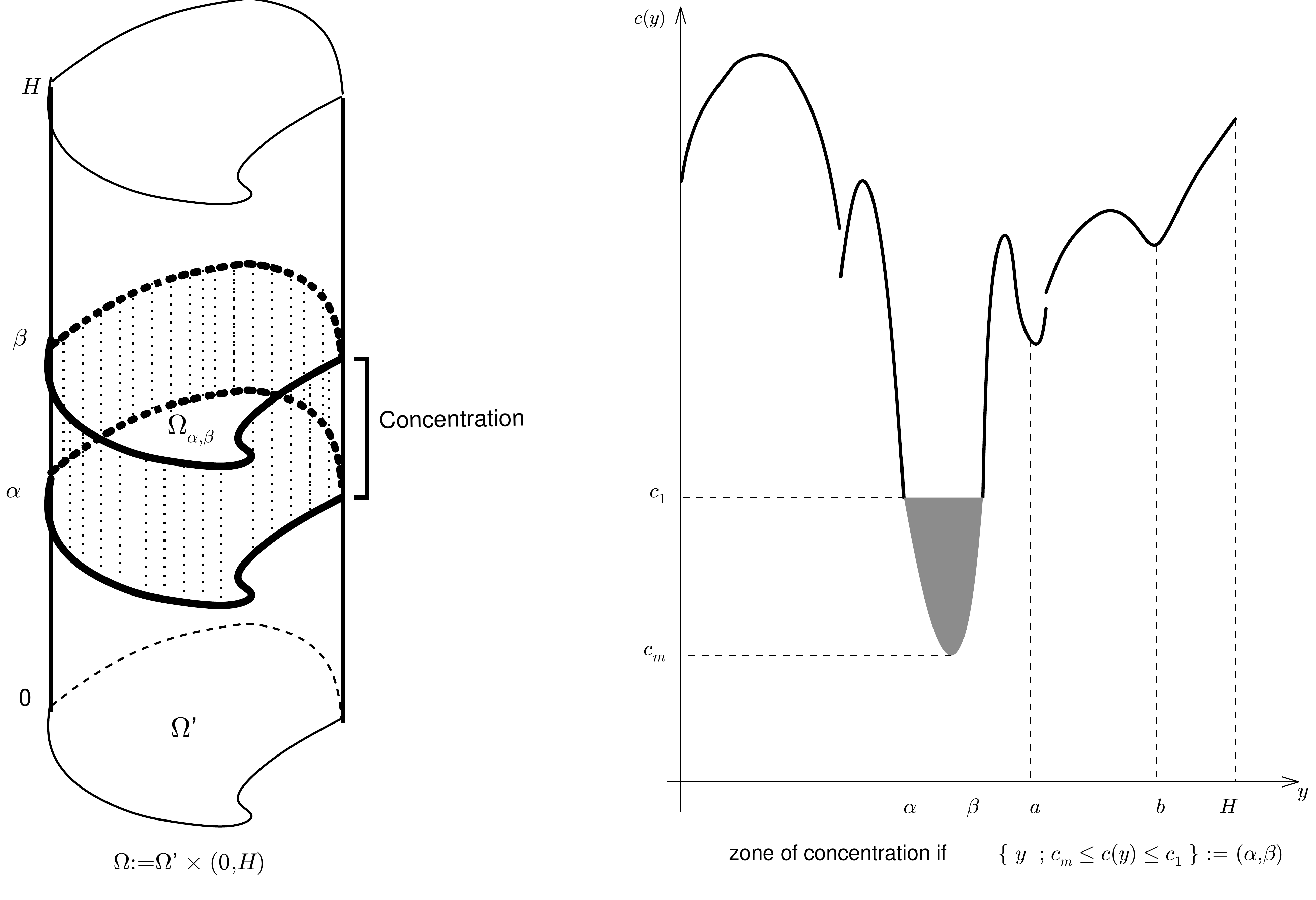}
\caption{}
\label{image2}
\end{figure}
The exponential decay in estimate~\eqref{v8G} can be compared to the results of ~\cite{AFM:1}. In our proof the 1-D dependence of $c(y)$ enables us to use sharp estimates of Green's kernel. On the other hand in ~\cite{AFM:1} the authors deal with a positive potential perturbation, that leads to a construction of an ``effective potential''. In terms of this potential the exponential decay is     expressed by an  ``Agmon-type'' \cite{AG:1} metric. In our case, from ~\eqref{equation-introduction:1} we can view the term $ c(y)\mu_{k}^{2}$ as the equivalent of a potential (but unbounded as $k\to\infty$).\\
\noindent
$\bullet$ {\textbf{ ON THE NON-CONCENTRATION.}}
 The second type of results concerns the  sets (indexed by $\varepsilon>0$) of non-guided normalized eigenfunctions (the set $\mathfrak{F}_{NG}$ of the Introduction). They are associated with eigenvalues
  $$\lambda= \beta_{k,\ell}> (c_{M}+\varepsilon)\mu_{k}^{2}, c_{M}:={\rm ess\,sup}_{y} c(y).$$
  This set is characterized
  by the fact that there is a positive lower bound for the masses in any layer $\Omega_{a,b},$  uniformly for all its elements.

     Recall  that $\lambda$ can correspond to several pairs $(k,\ell)$ and only some of them satisfy the above inequality.

  The geometrical interpretation of non-concentration is clear in the one-dimensional case $\Omega' = (0,L)$ and $\lambda= \beta_{k,\ell}> c_{M}\mu_{k}^{2}$: at each interface the angle between the wave and the normal is less than the critical angle stipulated by  geometric optics. So, the eigenfunction can travel across each layer without big loss.

  In physical applications it is conceivable that the diffusion coefficient $c(y)$ is known only approximately. It is therefore interesting to extend our study to deal with sets of such coefficients.  Let $0<c_m<c_M$ be fixed. We assume that every
 coefficient $c(y)$ satisfies condition $\mbox{\bf(H)}$ (see ~\eqref{eqassumptioncy}) and denote by
  \be\label{eq-def-K}\mathscr{K}=\set{c(y),\quad c_m={\rm ess\,inf}_{y} c(y)<c_M={\rm ess\,sup}_{y} c(y)}\ee
  the family of all such coefficients.

In various cases, we shall impose further assumptions on the elements of $\mathscr{K}.$

  We introduce the set of eigenvalues as above,  whose associated eigenfunctions will be shown to be (perhaps under additional assumptions) non-concentrating.
\begin{defn}\label{defn-Ac} Fix $\eps>0.$ For any fixed $\mu_k,$ let $\ell_{0,k}$ be the first $\ell$ satisfying $\beta_{k,\ell_{0,k}}\geq (c_M+\eps)\mu_k^2.$\\
We designate
 (see Figure \ref{image3})
\be\label{eqAceps} \mathscr{A}^{c}_\eps={\underset{k=1}{\overset{\infty}\bigcup}}\set{(\mu_k,\lambda),\,\,\,\lambda= \beta_{k,\ell},\quad\ell\geq\ell_{0,k}}.
\ee
\end{defn}
Next we define the \textbf{minimal amplitude} of the family of the associated solutions as follows.
\begin{equation}
 \label{equation-definition-hypo-minam:1}
 \mathfrak{r}_c^{2} = \inf_{\begin{array}{c} y\in \lbrack 0, H\rbrack,\\ (\mu_k,\lambda)\in\mathscr{A}^{c}_\eps\end{array}}\lbrack u_{\lambda,k}(y)^{2} + u'_{\lambda,k}(y)^{2}\rbrack.
 \end{equation}
 In the subsequent discussion the parameter $\eps>0$ is fixed and for simplicity of notation we omit the indication of the  dependence of $\mathfrak{r}_c$ on it.

 \begin{defn}
 \label{definition-hypo-minam:1}
 Let $c(y)\in \mathscr{K}.$ We say that $c(y)$ satisfies the {\textbf{\textit minimal amplitude hypothesis}} with respect to $\mathscr{A}^{c}_\eps$  if
 \be\label{eqrcgreat0}\mathfrak{r}_c>0.\ee
 \end{defn}
\begin{rem}\label{remprufer}
 Note that this hypothesis has a very clear geometric interpretation by means of the Pr\"{u}fer substitution ~\cite{birkhoff}.
\end{rem}
 \noindent
Observe that while the minimal amplitude deals with the sum of squares $u_{\lambda,k}(y)^{2} + u'_{\lambda,k}(y)^{2},$ the non concentration involves only the integral of $u_{\lambda,k}(y)^{2}$ over various intervals. The following Theorem ~\ref{theo-C1-noguided} connects these topics, showing that the minimal amplitude hypothesis implies non-concentration. Here we state it using the physical model with the spectral parameter $\lambda.$ It is  proved in a somewhat more detailed form (using the reduced eigenfunctions $u_{\lambda,k}$) as Theorem ~\ref{propcyirreg}  in Subsection ~\ref{subsection-sufficient condition-ng}.
\begin{thm}[Non-concentration in any layer]
\label{theo-C1-noguided}
Let $c(y)\in \mathscr{K}$ be a diffusion coefficient satisfying the minimal amplitude hypothesis. For any $(\mu_{k},\lambda)\in \mathscr{A}^{c}_\eps$  let $v_{\lambda}(x)$  be an associated eigenfunction.
  \\
 Let $\omega:=\omega'\times(a,b)\subseteq\Omega$ be an open set.
  If $\omega'\neq\Omega'$ assume that the family $\set{\phi_k(x')}_{k=1}^\infty$ of eigenfunctions of the Laplacian in $\Omega'\subseteq\RR^d$ does not concentrate in $\Omega'\setminus\omega'$ (see Definition \ref{def-concentration:1} and Remark \ref{rem-def-concentration:1}).
  \\
  Then  there exists  a constant $C_{\omega}>0$  such that,
\begin{equation}
\label{ineq-ng-general:1}
0< C_{\omega}\leq \inf\limits_ {(\mu_{k},\lambda)\in \mathscr{A}^{c}_\eps}\,\frac{\Vert v_{\lambda}\Vert_{L^{2}(\omega)}}{\Vert v_{\lambda}\Vert_{L^{2}(\Omega)}}\leq 1.
\end{equation}
\end{thm}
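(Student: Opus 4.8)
The plan is to exploit the product structure \eqref{equation-introduction:1bis}, $v_\lambda(x',y)=\phi_k(x')u_{\lambda,k}(y)$, which makes the Rayleigh-type ratio separate. Since $\omega=\omega'\times(a,b)$, Fubini gives
\begin{equation*}
\frac{\Vert v_\lambda\Vert_{L^2(\omega)}^2}{\Vert v_\lambda\Vert_{L^2(\Omega)}^2}
=\frac{\Vert\phi_k\Vert_{L^2(\omega')}^2}{\Vert\phi_k\Vert_{L^2(\Omega')}^2}\cdot
\frac{\int_a^b u_{\lambda,k}(y)^2\,dy}{\int_0^H u_{\lambda,k}(y)^2\,dy}.
\end{equation*}
The first factor is $1$ when $\omega'=\Omega'$; otherwise it equals $R_{\omega'}(\phi_k)$ in the sense of \eqref{eqdefRw}, and the non-concentration hypothesis on $\set{\phi_k}$ together with the fact that each $R_{\omega'}(\phi_k)>0$ (a Laplace eigenfunction cannot vanish on the open set $\omega'$) yields a uniform lower bound $\inf_k R_{\omega'}(\phi_k)>0$. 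For the denominator I would use the normalization of $u_{\lambda,k}$ in $L^2((0,H),c^{-1}dy)$ and $c(y)\le c_M$ to get $\int_0^H u_{\lambda,k}^2\,dy\le c_M$. Thus the entire statement reduces to a \emph{uniform lower bound} on the numerator $\int_a^b u_{\lambda,k}^2\,dy$ over $(\mu_k,\lambda)\in\mathscr{A}^{c}_\eps$.

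For the reduced equation \eqref{equation-introduction:1} I would write $u''+q\,u=0$ with $q(y)=\lambda/c(y)-\mu_k^2$. On $\mathscr{A}^{c}_\eps$ one has $\lambda\ge(c_M+\eps)\mu_k^2$, so $q>0$ and the solution is genuinely oscillatory; set $q_{\min}=\lambda/c_M-\mu_k^2$ and $q_{\max}=\lambda/c_m-\mu_k^2$. The naive route — integrating the minimal amplitude bound $u^2+(u')^2\ge\mathfrak{r}_c^2$ of \eqref{eqrcgreat0} to get $\int_a^b(u^2+(u')^2)\ge\mathfrak{r}_c^2(b-a)$ — fails, and this is the main obstacle: for rapidly oscillating modes $(u')^2$ dominates the sum (one checks $\int(u')^2=\int q\,u^2\approx q\int u^2$), so the derivative term carries almost all the mass and no control of $\int u^2$ follows directly. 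The whole difficulty is to produce a bound on $\int_a^b u^2$ that does \emph{not} degenerate as $\lambda\to\infty$.

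To overcome this I would argue half-wave by half-wave. By Sturm comparison (with the constant-coefficient equation of coefficient $q_{\min}$) the distance between consecutive zeros of $u_{\lambda,k}$ is at most $\pi/\sqrt{q_{\min}}$, so $(a,b)$ contains $N\gtrsim(b-a)\sqrt{q_{\min}}/\pi$ complete half-waves $I_i=(z_i,z_{i+1})$. On each $I_i$, integration by parts gives $\int_{I_i}(u')^2=\int_{I_i}q\,u^2\le q_{\max}\int_{I_i}u^2$; the crest value $M_i=\max_{I_i}|u|$ is attained where $u'=0$, whence $M_i^2\ge\mathfrak{r}_c^2$ by \eqref{equation-definition-hypo-minam:1}; and writing $M_i=\bigl|\int_{y^*}^{z_{i+1}}u'\bigr|$ and applying Cauchy–Schwarz yields $M_i^2\le|I_i|\int_{I_i}(u')^2\le|I_i|\,q_{\max}\int_{I_i}u^2$. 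Combining these gives $\int_{I_i}u^2\ge \mathfrak{r}_c^2/(|I_i|\,q_{\max})$, and summing (using convexity, $\sum 1/|I_i|\ge N^2/(b-a)$) produces
\begin{equation*}
\int_a^b u_{\lambda,k}^2\,dy\;\ge\;\frac{\mathfrak{r}_c^2}{q_{\max}}\cdot\frac{N^2}{b-a}\;\gtrsim\;\frac{(b-a)\,\mathfrak{r}_c^2\,q_{\min}}{\pi^2\,q_{\max}}.
\end{equation*}

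The final point is uniformity of the ratio $q_{\min}/q_{\max}$. Writing $t=\mu_k^2/\lambda\in[0,1/(c_M+\eps)]$ one has $q_{\min}/q_{\max}=(1/c_M-t)/(1/c_m-t)$, a continuous strictly positive function on a compact interval (the denominator is bounded below by $1/c_m-1/(c_M+\eps)>0$), hence bounded below by some $\kappa=\kappa(c_m,c_M,\eps)>0$. This gives $\int_a^b u_{\lambda,k}^2\,dy\ge\kappa\,\mathfrak{r}_c^2(b-a)/\pi^2$ uniformly for all large $\lambda$; the finitely many $(\mu_k,\lambda)\in\mathscr{A}^{c}_\eps$ with $\lambda$ below any fixed threshold each contribute a strictly positive $\int_a^b u_{\lambda,k}^2>0$ (no nontrivial ODE solution vanishes on an interval), so their infimum is positive as well. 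Taking the product with the uniform lower bound for the $\phi_k$-factor and dividing by $c_M$ yields the constant $C_\omega>0$ of \eqref{ineq-ng-general:1}, the upper bound $\le1$ being trivial since $\omega\subseteq\Omega$.
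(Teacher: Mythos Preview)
Your argument is correct, but it takes a more circuitous route than the paper's. The paper (Theorem~\ref{propcyirreg}) uses the same product decomposition and the same reduction to a uniform lower bound on $\int_a^b u_{\lambda,k}^2\,dy$, but the half-wave estimate is obtained much more directly: since $q>0$, the solution $u$ is strictly convex or concave on each $I_i=(z_i,z_{i+1})$ (Claim~\ref{claim:2}), hence its graph dominates the piecewise-linear ``tent'' $\bar u$ joining $(z_i,0)$, $(z_{i+\frac12},M_i)$, $(z_{i+1},0)$. A one-line computation gives $\int_{I_i}\bar u^2=\tfrac13|I_i|M_i^2$, so $\int_{I_i}u^2\ge\tfrac13\mathfrak{r}_c^2|I_i|$; summing over half-waves covering at least $\tfrac23(b-a)$ yields $\int_a^b u^2\ge\tfrac29\mathfrak{r}_c^2(b-a)$, with no dependence on $q_{\min}/q_{\max}$ at all.

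Your path---Cauchy--Schwarz plus integration by parts to get $\int_{I_i}u^2\ge\mathfrak r_c^2/(|I_i|\,q_{\max})$, then AM--HM over the half-waves, then the uniform lower bound on $q_{\min}/q_{\max}$---is a genuinely different mechanism. It trades the geometric convexity input for analytic manipulations, and it would survive in settings where the sign of $u''$ on a half-wave is not so transparent; the price is an extra layer (the ratio $q_{\min}/q_{\max}$) and a constant that now depends on $c_m,c_M,\eps$. The paper's tent comparison is shorter and gives the sharper, more explicit constant.
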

\begin{rem}\label{remuniformcy}
We shall see that in various cases we can find subsets $\mathscr{K}_1\subseteq\mathscr{K}$ such that the inequality ~\eqref{ineq-ng-general:1} holds uniformly with respect to $c\in \mathscr{K}_1.$
\end{rem}
\setcounter{figure}{2}
\hspace{-.5cm}\begin{minipage}[t]{169mm}
\begin{wrapfigure}{r}{6.1cm}
\vskip-0.3cm
\includegraphics[scale=.3]{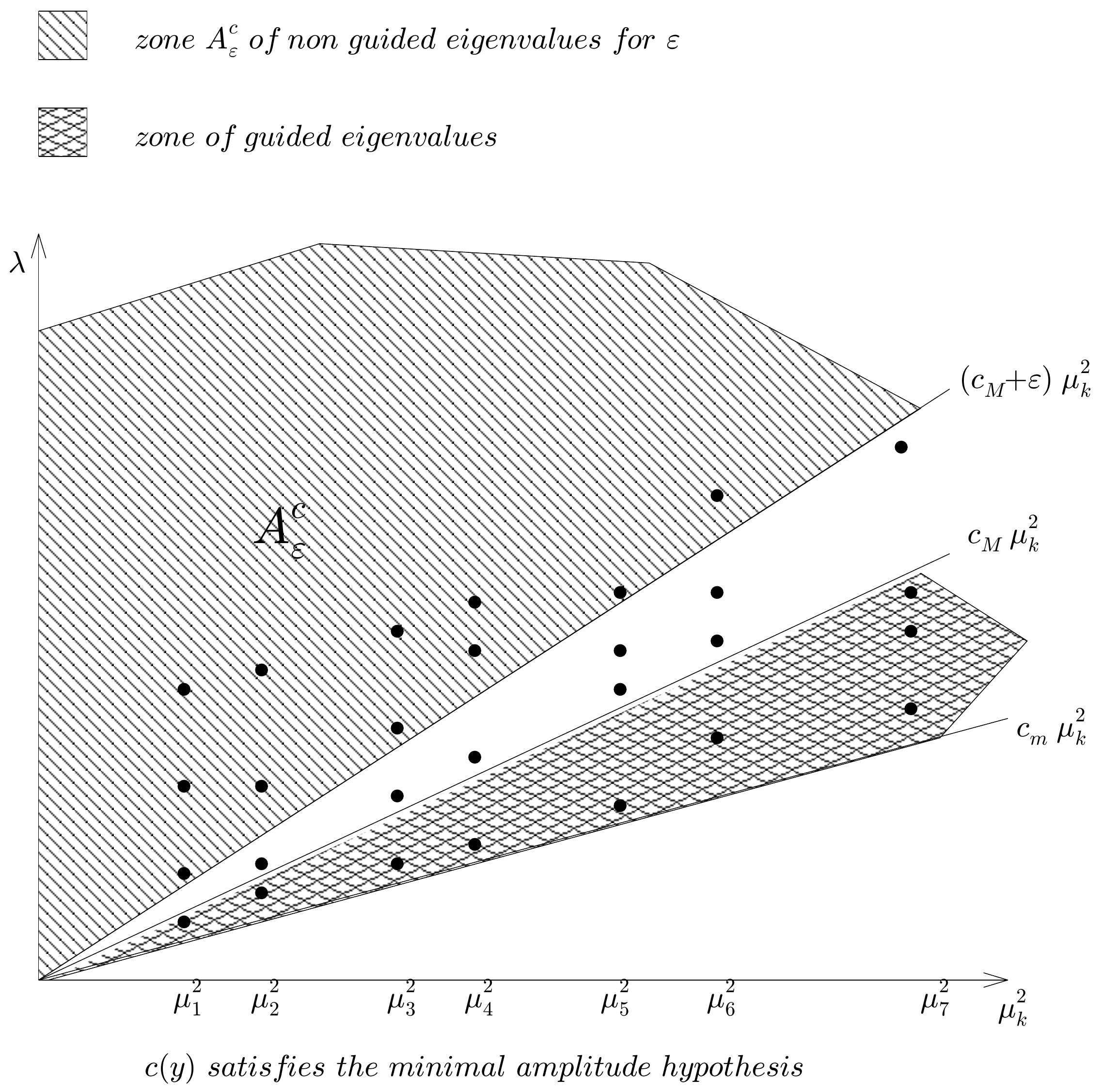}\vskip-.4cm\caption{\label{image3}}\hfill
\end{wrapfigure}
\noindent
\begin{rem}\label{rem-about-min-amplitude}$\empty$
 \begin{itemize}
 \item
  In Section ~\ref{section-nonguided waves:2} we show that any eigenfunction associated with eigenvalues in $\mathscr{A}^{c}_\eps$ behaves in an oscillatory fashion. This is a straightforward consequence of the comparison principle. However, it does not exclude the possibility that some of the sections of the oscillatory solution may ``flatten out'', namely their amplitudes shrink as $\lambda\to\infty.$
  The condition ~\eqref{equation-definition-hypo-minam:1} ensures that such phenomena do not happen, as is stated in Theorem ~\ref{theo-C1-noguided}.
\item
 In Subsection ~\ref{subsection-sufficient condition-ng} we discuss the meaning of the minimal amplitude hypothesis. If the function $c(y)$ is of bounded total variation we prove (Theorem ~\ref{prop-minam:1-new}) that it satisfies the hypothesis with respect to $ \mathscr{A}^{c}_\eps.$  This covers the cases of
 functions in $ C^{1}(\lbrack 0,H\rbrack)$ as well as  functions in $W^{1,1}(\lbrack 0,H\rbrack)$,  piecewise constant functions ...
 \end{itemize}
 \end{rem}
 \end{minipage}
\vskip.2cm
\noindent
\section{\textbf{GUIDED WAVES (SEE THEOREM ~\ref{theo-general-c-:1})}
 }
 \label{section-guidedwaves:1}
We refer to the geometric setup in Definition \ref{def-well:1} above. The simplified case with $\alpha = 0, \beta =h_{0}$  is common in the physical literature dealing with band structure. It was our starting point at the early stage of this work \cite{BBD:3}. In this section we always assume ~\eqref{eqassumptioncy}.
\subsection{\textbf{PROOF OF THEOREM ~\ref{theo-general-c-:1}}}$\empty$\label{subsecproof}\\
Using the notation in Definition  ~\ref{def-well:1} and ~\eqref{equation-introduction:1bis} we are interested in the behavior of $u_{\lambda,k}$ (solution to \eqref{equation-introduction:1}) as $\lambda\to \infty$. Note that for all $(\lambda,k), \,\lambda = \beta_{k,\ell},$ the function $w_{\lambda,k}(y):=u_{\lambda,k}(y)^2$ is a solution to (where we use $w$ for simplicity)
 \begin{eqnarray*}\left\{\begin{array}{ll}w''(y) -2(\mu_k^2-\frac{\lambda}{ c(y)})w(y)=g(y)\mbox{ on } (0,H),\\
w(0)=w'(0)=w(H)=w'(H)=0\,\end{array}\right.
\end{eqnarray*}
where $g=2(u'_{\lambda,k})^2$.
%
The introduction of $\xi=\sqrt{2(\mu_k^2-\frac \lambda{c_1})}$ transforms the previous equation into
\begin{eqnarray}\label{pbg}\left\{\begin{array}{ll}w'' -\xi^2w=-f+ g(y)\mbox{ on }(0,H),\\
w(0)=w'(0)=w(H)=w'(H)=0,\end{array}\right.
\end{eqnarray}
with
\begin{equation}\label{fg}
f(y)= 2\lambda\frac{c_1-c(y)}{c_1c(y)} w(y),\quad y\in  (0,H).
\end{equation}
\textbf{Some properties of the solutions of ~\eqref{pbg}}
\label{subsection-properties-solutions-general case:1}
\\
 We now derive upper and lower bounds for solutions of ~\eqref{pbg}.
\begin{claim}[Upper pointwise bounds for the geometric situation as in Definition \ref{def-well:1}]
	\label{eb}
Let $w$ be a solution to \eqref{pbg}-\eqref{fg}. Let $d(y,(\alpha,\beta))$ be the distance of $y$ to the interval $(\alpha, \beta).$ Then, if $c_{m}\,\mu_{k}^{2}\leq \lambda <c_{1}\mu_k^2, $ we have
 \begin{eqnarray}\label{vG}
y\in (0,H)\setminus (\alpha,\beta)
 &\!\!\!\Longrightarrow &\!\!\!w( y)\leq \underset{y'\in(\alpha,\beta)}{\sup} \left\vert \frac{c_{1}-c(y')}{c_{1}c(y')}\right\vert  \frac \lambda \xi e^{-\xi d(y,(\alpha,\beta))}\int_\alpha^\beta w(z)\,dz.
\end{eqnarray}
\end{claim}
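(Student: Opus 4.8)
The claim to prove is an upper pointwise bound for solutions $w$ of the problem
\begin{eqnarray*}
\left\{\begin{array}{ll}w'' -\xi^2w=-f+ g(y)\mbox{ on }(0,H),\\
w(0)=w'(0)=w(H)=w'(H)=0,\end{array}\right.
\end{eqnarray*}
with $f(y)= 2\lambda\frac{c_1-c(y)}{c_1c(y)} w(y)$ and $g=2(u'_{\lambda,k})^2\geq 0$, in the region $y\in(0,H)\setminus(\alpha,\beta)$.

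My plan is to represent $w$ via the Green's function of the operator $-\frac{d^2}{dy^2}+\xi^2$ on $(0,H)$ with the relevant boundary conditions, and then to exploit sign information to discard unfavorable terms. First I would write $w(y)=\int_0^H G(y,z)\,[f(z)-g(z)]\,dz$, where $G$ is the Green's kernel for $w''-\xi^2 w=-(\text{source})$. The key analytic input is a sharp pointwise bound on $G$: on the line, the fundamental solution of $-w''+\xi^2 w=\delta$ is $\tfrac{1}{2\xi}e^{-\xi|y-z|}$, and for the bounded interval with Dirichlet-type data the Green's function is dominated by this exponential kernel, i.e. $0\le G(y,z)\le \tfrac{1}{2\xi}e^{-\xi|y-z|}$ (or a comparable constant). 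This is where the factor $\xi^{-1}$ and the exponential $e^{-\xi d(y,(\alpha,\beta))}$ will originate.

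The decisive structural observation is where $c_1$ was chosen: on $(0,H)\setminus(\alpha,\beta)$ we have $c(y)\ge c_1$, so $\frac{c_1-c(y)}{c_1 c(y)}\le 0$ there, meaning $f(z)=2\lambda\frac{c_1-c(z)}{c_1 c(z)}w(z)\le 0$ on the complement of the well (using $w=u^2\ge 0$). Since $g\ge 0$ as well, the source $f-g$ is nonpositive outside $(\alpha,\beta)$, and only the contribution from $z\in(\alpha,\beta)$ can make $w(y)$ positive. Thus for $y$ outside the well I would bound $w(y)\le \int_\alpha^\beta G(y,z)\,f(z)\,dz$, discarding both the negative portion of $f$ on the complement and the term $-g$ everywhere (this requires $G\ge 0$, part of the Green's function estimate). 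On $(\alpha,\beta)$ the coefficient $\frac{c_1-c(z)}{c_1 c(z)}$ is estimated by its supremum of absolute values, and for $y\notin(\alpha,\beta)$, $z\in(\alpha,\beta)$ one has $|y-z|\ge d(y,(\alpha,\beta))$, giving $e^{-\xi|y-z|}\le e^{-\xi d(y,(\alpha,\beta))}$. Collecting constants yields exactly the stated bound
\begin{eqnarray*}
w(y)\leq \underset{y'\in(\alpha,\beta)}{\sup}\left\vert\frac{c_1-c(y')}{c_1 c(y')}\right\vert\,\frac{\lambda}{\xi}\,e^{-\xi d(y,(\alpha,\beta))}\int_\alpha^\beta w(z)\,dz.
\end{eqnarray*}

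I expect the main obstacle to be a fully rigorous justification of the Green's function bound $0\le G(y,z)\le \frac{1}{2\xi}e^{-\xi|y-z|}$ under the overdetermined boundary data $w(0)=w'(0)=w(H)=w'(H)=0$. The system \eqref{pbg} carries four boundary conditions rather than two, so $G$ is not literally the classical Dirichlet Green's function; the correct reading is that $w$ is a genuine solution whose existence is guaranteed (it equals $u_{\lambda,k}^2$), and one uses the integral representation together with a maximum/comparison principle for $-w''+\xi^2 w$ to get sign and decay control. I would handle this by invoking the comparison principle: since the operator $-\frac{d^2}{dy^2}+\xi^2$ satisfies a maximum principle for $\xi^2>0$, and the source is sign-controlled as above, one compares $w$ against the explicit exponential supersolution $\frac{\lambda}{\xi}\big(\sup|\cdots|\big)\big(\int_\alpha^\beta w\big)e^{-\xi d(y,(\alpha,\beta))}$, verifying it dominates $w$ on each connected component of $(0,H)\setminus(\alpha,\beta)$ by checking the differential inequality and the boundary/interface values. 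The condition $\lambda<c_1\mu_k^2$ (equivalently $\xi^2>0$) is precisely what makes this comparison operator coercive and the exponential a valid supersolution.
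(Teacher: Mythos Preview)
Your approach is essentially the paper's: represent $w$ via the Dirichlet Green's function for $\frac{d^2}{dy^2}-\xi^2$ on $(0,H)$, use the sign of $G$ to drop the $g$-term, use $f\le 0$ outside the well to restrict the integral to $(\alpha,\beta)$, and finish with the exponential bound $|G(y,z)|\le \frac{1}{2\xi}e^{-\xi|y-z|}$. The paper writes the explicit kernel $G(y,y';\xi)=-\frac{\sinh(\xi y_<)\sinh(\xi(H-y_>))}{\xi\sinh(\xi H)}$ and derives the same exponential bound via $\frac{\sinh(\xi(H-y))}{\sinh(\xi H)}\le e^{-\xi y}$ and $\sinh(\xi y')\le\frac12 e^{\xi y'}$, so the arguments coincide.

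Your concern about the ``overdetermined'' boundary data is a red herring, and the comparison-principle detour is unnecessary. The problem \eqref{pbg} is not an abstract boundary value problem to be solved: $w=u_{\lambda,k}^2$ is a \emph{specific} function which happens to satisfy all four conditions. The Dirichlet Green's representation only requires $w(0)=w(H)=0$, which $w$ satisfies, so the integral formula is valid as written. The extra Neumann conditions $w'(0)=w'(H)=0$ play no role here (they are used only in the companion lower-bound Claim~\ref{prop2G}).
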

\begin{proof} For simplicity of the presentation we take $\alpha=0.$
We use the Green function $G$ of the Dirichlet operator $w''-\xi^{2}w$ and prove exponential decay outside the well if $\lambda< c_{1}\mu_{k}^{2},$   depending on the distance of $y$ to the well.
\\
The Green function is given by
\begin{equation}\label{green}
\forall y,y'\in (0,H),\quad G(y,y';\xi): =\left\{\begin{array}{lll}-\frac {\sinh(\xi y)}{\xi \sinh(\xi H)}\sinh(\xi(H-y')),\quad y<y',\\
-\frac {\sinh(\xi y')}{\xi \sinh(\xi H)}\sinh(\xi(H-y)),\quad y>y'.
\end{array}\right.
\end{equation}
Then, from \eqref{pbg},
$\forall y\in (0,H),\; w(y)= -\int_0^HG(y,y';\xi)\,f(y')\,dy'+ 2\int_0^HG(y,y';\xi)(u'_{k,\lambda})^2(y')\,dy'$ and, as $G\leq 0$, we get
\begin{equation*}
\label{v1g}
\forall y\in (0,H),\quad w(y)  \leq  -\int_0^HG(y,y';\xi)\,f(y')\,dy'.
\end{equation*}
In view of ~\eqref{cge:2} one has $c(y)\geq c_{1}$ outside $(0,\beta)$ and, as $f$ is nonpositive on $(0,H)\setminus(0,\beta)$ and $G\leq 0,$ this implies
\begin{equation}
\label{v2G}
\forall y\in (0,H)\setminus(0,\beta),\quad w(y)  \leq  -\int_0^\beta G(y,y';\xi)\,f(y')\,dy'.
\end{equation}
The estimate \eqref{v2G} becomes
$w(y)\leq -2\lambda  \int_0  ^\beta \big(\underset{y'\in(0,\beta)}{\sup} \vert \frac{c_{1}-c(y')}{c_{1}c(y')}\vert\big) G(y,y';\xi) w(y')\,dy'$ since
$-G\geq 0$ and $w(y)\geq 0.$ Taking $y\in (0,H)\setminus (0,\beta),$
 the definition of $G$ applied to \eqref{v2G} gives
$$w(y)\leq 2\frac \lambda \xi \underset{y'\in(0,\beta)}{\sup} \left\vert \frac{c_{1}-c(y')}{c_{1}c(y')}\right\vert\frac{\sinh(\xi \big(H-y\big))}{\sinh(\xi H)}\int_0  ^\beta \sinh(\xi y')w(y')\,dy' .
$$
The estimate~\eqref{vG} is deduced from
$\frac {\sinh(\xi (H-y))}{\sinh(\xi H)}\leq e^{-\xi y},\;\underset{[0,\beta]}\sup \sinh(\xi y')\leq \frac{1}{2} e^{\xi \beta},$ since the distance of $y$ to $(0,\beta)$ is $d(y,(0,\beta)) =y-\beta$.
\end{proof}
\begin{claim}[Lower pointwise bounds for the solution]
\label{prop2G}
Let   $\lbrack a,b]\subset \lbrack0, \alpha)\cup(\beta,H]$ and $d = d((\alpha,\beta),(a,b)).$ Then, for all $ t\in (0,1)$, any $w$  solution of \eqref{pbg} verifies
\begin{eqnarray}
\label{v4G}
\begin{array}{ll}  a>\beta\,\,\Rightarrow  w( t a+(1-t)\beta)\geq \frac{1}{2}\xi^2 (1-t)d \int_a^b w(y)\,dy,\\
   b<\alpha\,\,\Rightarrow w (t b+(1-t)\alpha)\geq   \frac{1}{2} \xi^2  (1-t)d \int_a^b w(y)\,dy.
   \end{array}\end{eqnarray}
   \end{claim}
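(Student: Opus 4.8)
The plan is to work on one side of the well at a time and to exploit a differential inequality satisfied by $w=u_{\lambda,k}^2$ away from the well. I treat the case $a>\beta$ (the case $b<\alpha$ is symmetric, with the roles of the endpoints $0$ and $H$ interchanged). The first step is to pin down the sign of the source term of \eqref{pbg} on the interval $(\beta,H)$. Since $[a,b]\subset(\beta,H]$ lies outside the well, \eqref{cge:2} gives $c(y)\geq c_1$ there, so by \eqref{fg} one has $f\leq 0$, i.e. $-f\geq 0$; combined with $g=2(u'_{\lambda,k})^2\geq 0$ and $w=u_{\lambda,k}^2\geq 0$ this yields $w''-\xi^2 w=-f+g\geq 0$ on $(\beta,H)$, hence $w''\geq \xi^2 w\geq 0$ there (in the a.e.\ sense, which suffices since $u\in H^2(0,H)\subset C^1$ makes $w'$ absolutely continuous).

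Next I extract monotonicity from the homogeneous boundary data. The inequality $w''\geq 0$ shows $w'$ is nondecreasing on $(\beta,H)$; since $w'(H)=0$ this forces $w'\leq 0$ there, so $w$ is nonincreasing and convex on $(\beta,H)$. I then integrate the differential inequality over $[a,b]$, obtaining $w'(b)-w'(a)=\int_a^b w''\,dy\geq \xi^2\int_a^b w\,dy$. Because $w'(b)\leq 0$, this upgrades to the one–point derivative estimate $-w'(a)\geq \xi^2\int_a^b w(y)\,dy$.

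The final step transports this derivative bound to the value of $w$ at $y_0:=ta+(1-t)\beta$, which for $t\in(0,1)$ lies in $(\beta,a)\subset(\beta,H)$. By convexity $w'$ is nondecreasing, so $w'(y)\leq w'(a)$ for $y\in[y_0,a]$, whence $w(a)-w(y_0)=\int_{y_0}^a w'\,dy\leq (a-y_0)\,w'(a)$. Using $w(a)\geq 0$ and $w'(a)\leq 0$ gives $w(y_0)\geq -(a-y_0)\,w'(a)\geq (a-y_0)\,\xi^2\int_a^b w\,dy$. Identifying the distance $a-y_0=(1-t)(a-\beta)=(1-t)d$ then yields $w\big(ta+(1-t)\beta\big)\geq \xi^2(1-t)d\int_a^b w\,dy$, which is in fact stronger than the asserted bound (the factor $\frac{1}{2}$ leaves room to spare). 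The symmetric case $b<\alpha$ runs identically on $(0,\alpha)$: there $w'(0)=0$ forces $w'\geq 0$ and $w$ nondecreasing, integration over $[a,b]$ gives $w'(b)\geq \xi^2\int_a^b w\,dy$, and convexity propagates this rightward to $y_0=tb+(1-t)\alpha$ with $y_0-b=(1-t)d$.

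I do not expect a genuine obstacle here. The one real point to get right is the sign bookkeeping: that $-f\geq 0$ precisely because $[a,b]$ sits outside the well where $c\geq c_1$, and the correct use of the endpoint condition $w'(H)=0$ (resp.\ $w'(0)=0$) to turn convexity into the right monotonicity, so that the unwanted boundary term $w'(b)$ (resp.\ $w'(a)$) can be dropped in the favorable direction. The only care needed is to ensure that $y_0$ remains in the region $(\beta,H)$ (resp.\ $(0,\alpha)$) where the differential inequality holds, which is guaranteed by $t\in(0,1)$ together with $[a,b]\subset[0,\alpha)\cup(\beta,H]$.
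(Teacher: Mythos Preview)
Your proof is correct and follows essentially the same approach as the paper: both arguments exploit the inequality $w''\geq \xi^2 w$ on $(\beta,H)$ (coming from $-f\geq 0$, $g\geq 0$ there) together with the boundary data $w(H)=w'(H)=0$. The paper integrates this inequality twice from $H$ to obtain directly $w(\gamma)\geq \xi^2\int_\gamma^H(y-\gamma)w(y)\,dy\geq \xi^2(a-\gamma)\int_a^b w$, while you split this into a single integration (giving $-w'(a)\geq \xi^2\int_a^b w$) followed by a convexity argument to pass from $a$ to $y_0$; these are two organizations of the same computation, and both yield the bound without the extra factor~$\tfrac12$.
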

\begin{proof}
Again for  simplicity we take $\alpha = 0.$ Taking into account that $w(H)=w'(H)=0$, if $a>\beta$, we integrate twice \eqref{pbg} from $H$.  As $(u'_{k,\lambda})^2\geq0$  and $f\leq 0$ on $(\beta,H)$, we obtain with $\gamma = ta + (1-t)\beta, 0<t<1,$
 \begin{equation*}
 \label{equation-lowerbound:1}
 \forall a>\beta,\quad w (\gamma)\geq  \int_{\gamma}^H  \int_y^H \xi^{2}
w(z)\,dz\,dy =  \xi^2 \int_{\gamma}^H (y-\gamma)w(y)\,dy\geq \xi^{2}(a-\gamma)\int_{a}^{b}w(y)\,dy.
\end{equation*}
\end{proof}
 \noindent
\textbf{Conclusion of the proof of Theorem \ref{theo-general-c-:1}:}
The estimates ~\eqref{v8G}-\eqref{equation-decreasing:1} follow directly by combining \eqref{vG} and \eqref{v4G}.
\begin{rem}
In the estimates above we have used the explicit form of the Green kernel. As an alternative we could use general trace estimates that are applicable also for divergence-type operators $-\nabla\cdot(\tilde{c}\nabla),$ where an explicit kernel is not available. However, this method yields only a polynomial rate of decay $O\left(\frac{1}{\mu_{k}^{2(1-s)}}\right)$ in \eqref{v8G}. This approach will be used in a subsequent paper.
\end{rem}
\noindent
Observe that if the profile $c(y)$ has two wells, with the same "depth" $c_m$ (see Definition \ref{def-well:1}), then the method of proof of Theorem \ref{theo-general-c-:1} fails. However, by enlarging $(\alpha,\beta)$ so that it contains the two wells, we can repeat the proof to get concentration in this extended band.

\begin{rem}[\textbf{Estimating in terms of a subdomain of the well}]\label{remsubwell}
    Note that in the right-hand side of the estimate ~\eqref{v8G} the mass in the well is
    $$\int_{\Omega_{\alpha,\beta}}v_{\lambda}^2(x)\,dx=\int_\alpha^\beta u_{k,l}(\ylambk)^2dy\cdot\int_{\Omega'}|\phi_k(x')|^2dx'.$$
    Suppose that there exists an open domain $\omega'\subseteq\Omega'$ and a subsequence (retaining the same index) $\set{\phi_k(x')}_{k=1}^\infty$ that does not concentrate in $\Omega'\setminus\omega'.$ Then clearly $\int_{\Omega_{\alpha,\beta}}v_{\lambda}^2(x)\,dx$ can be replaced by $\int_{\omega'\times(\alpha,\beta)}v_{\lambda}^2(x)\,dx.$
\end{rem}

\subsection{\textbf{EXISTENCE OF EIGENVALUES COMPATIBLE WITH ASSUMPTION } ~\eqref{assumption:1}}$\empty$\\
\label{subsection-existence-eigenvalues-general case:1}
The previous results rely on the existence of infinitely many eigenvalues satisfying Assumption ~\eqref{assumption:1}. This fact is established in the
following theorem.
\begin{thm}
\label{theo-existence-eigenvalues-general case:1}
The number of eigenvalues satisfying Assumption ~\eqref{assumption:1} goes to infinity with $k.$
\end{thm}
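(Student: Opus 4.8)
The plan is to reduce the assertion to a one-dimensional eigenvalue count and to show, for each fixed $k$, that the number
$$N_k:=\#\{\ell:\ \beta_{k,\ell}<c_1\mu_k^2\}$$
of Dirichlet eigenvalues of $A_k$ below the threshold imposed by \eqref{assumption:1} (recall $\xi^2_{\lambda,k}>0\iff\lambda<c_1\mu_k^2$) tends to $\infty$ as $k\to\infty$, i.e. as $\mu_k\to\infty$. First I would recast the inequality $\beta_{k,\ell}<c_1\mu_k^2$ variationally. Since $A_k$ is self-adjoint in $L^2((0,H),c(y)^{-1}dy)$ with quadratic form $\int_0^H(\mu_k^2|u|^2+|u'|^2)\,dy$ and weight $\int_0^H|u|^2c(y)^{-1}\,dy$, the min--max principle identifies $N_k$ with the maximal dimension of a subspace $V\subseteq H^1_0(0,H)$ on which
$$Q_k(u):=\int_0^H|u'|^2\,dy-\mu_k^2\int_0^H W(y)\,|u|^2\,dy<0,\qquad W(y):=\frac{c_1}{c(y)}-1 .$$
Thus $N_k$ is exactly the number of negative Dirichlet eigenvalues of the Schr\"odinger operator $-\partial_y^2-\mu_k^2W(y)$ on $(0,H)$, in the strong-coupling regime $\mu_k^2\to\infty$. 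Because $c_m={\rm ess\,inf}\,c<c_1$, the set $S:=\{y:\,c(y)<c_1\}$ has positive measure, and on $S':=\{c(y)<c'\}$ with $c_m<c'<c_1$ one has $W\geq\eta:=\tfrac{c_1}{c'}-1>0$.

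The underlying mechanism is that, where $W>0$, the solutions of $\psi''+\mu_k^2W\psi=0$ oscillate with frequency $\sim\mu_k\sqrt W$, so the number of zeros — equivalently, of bound states — grows linearly in $\mu_k$. If $S$ contains an interval $(p,q)$, which is automatic when $c$ is continuous (then $S$ is open), this is immediate: Sturm's comparison theorem, applied against the constant-coefficient equation $\phi''+\mu_k^2\eta\,\phi=0$, forces at least $\lfloor(q-p)\mu_k\sqrt\eta/\pi\rfloor$ interior zeros in $(p,q)$, whence $N_k\to\infty$. This is the Pr\"ufer/oscillation picture alluded to in Remark \ref{remprufer}.

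The main obstacle is that \eqref{eqassumptioncy} only grants $c\in L^\infty$, so $S$ need not contain any interval and Sturm comparison does not apply directly; one must then argue by trial functions. Here I would fix a Lebesgue density point $y_0$ of $S'$ and a small interval $J=(y_0-\rho,y_0+\rho)$ with $|J\setminus S'|\leq\delta|J|$, where $\delta$ is chosen below. Partitioning $J$ into $M$ equal subintervals $J_1,\dots,J_M$ and taking on each $J_i$ the unit-height tent function $u_i$ (disjoint supports, so $Q_k$ is diagonal on $\operatorname{span}\{u_i\}$, and $N_k$ is at least the number of indices $i$ with $Q_k(u_i)<0$), elementary computation gives $\int|u_i'|^2=2M/\rho$, $\int_{J_i}u_i^2=|J_i|/3$, and, using $W\geq-1$,
$$\int W\,u_i^2\ \geq\ \eta\Big(\tfrac{|J_i|}{3}-|J_i\setminus S'|\Big)-|J_i\setminus S'| .$$
Hence, whenever $|J_i\setminus S'|\leq\tfrac12\tau$ with $\tau:=\tfrac{2\eta\rho}{3M(\eta+1)}$, the bracket exceeds a fixed positive multiple of $\rho/M$, so $Q_k(u_i)<0$ as soon as $\mu_k^2$ absorbs the fixed term $2M/\rho$. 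The number of exceptional indices is at most $2|J\setminus S'|/\tau\leq C\,\delta M(\eta+1)/\eta$, a fraction of $M$ that is $<M/2$ once $\delta$ is taken small — which the density point permits.

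Putting this together: for arbitrary $M$, choose $\rho,\delta$ as above; then for all $k$ with $\mu_k$ large enough at least $M/2$ of the $u_i$ satisfy $Q_k(u_i)<0$, so $\liminf_k N_k\geq M/2$. Since $M$ is arbitrary, $N_k\to\infty$, which is the claim. I expect the only genuinely delicate point to be this trial-function estimate, namely keeping the negative excursions of $W$ on $J\setminus S'$ under control by working at a density point of $S'$; the reduction to the Schr\"odinger count and the oscillation mechanism are the standard min--max/Sturm machinery.
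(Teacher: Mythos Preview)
Your proof is correct and actually establishes a stronger statement than the paper's own argument. The paper merely shows that the \emph{first} eigenvalue $\beta_{k,1}$ of $A_k$ lies below $(c_1-\varepsilon_0)\mu_k^2$ for all large $k$: it picks an open set $U\subseteq(\alpha,\beta)$ on which $c(y)<c_1-2\varepsilon_0$, and then bounds $\beta_{k,1}$ by the Rayleigh quotient of a Dirichlet test function supported in $U$, obtaining $\beta_{k,1}\le(c_1-2\varepsilon_0)(\mu_k^2+\theta_1)$ with $\theta_1$ the first Dirichlet eigenvalue on $U$. This yields only $N_k\ge 1$ for $k$ large; the paper then remarks that ``other sequences'' can be built and skips the details. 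Your argument, by contrast, produces arbitrarily many trial functions and gives the full $N_k\to\infty$ directly.

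There is also a genuine technical advantage to your route. The paper's proof asserts the existence of an \emph{open} set $U$ on which $c(y)<c_1-2\varepsilon_0$, justifying this by the continuity of the distribution function $t\mapsto|\{c\le t\}|$. But under hypothesis~\eqref{eqassumptioncy} alone ($c\in L^\infty$), the sublevel set $\{c<c_1-2\varepsilon_0\}$ need not contain any interval --- it could be, say, a fat Cantor set --- and continuity of the distribution function does not remedy this. Your density-point construction with tent functions sidesteps this issue entirely and works under the bare measurability assumption, which is precisely what the statement requires. The estimate controlling the contribution of $J_i\setminus S'$ via $W\ge c_1/c_M-1>-1$ is the right mechanism, and your bookkeeping with $\tau$ and the bound on the number of bad subintervals is sound.
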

\begin{proof}
We use three ingredients. Let $\alpha,\,\beta$ be as in ~\eqref{cge:2}.  First, the function $\lbrack 0,H\rbrack\ni y\mapsto\int_{c(y')\leq y} {\rm d}y'$ being continuous there exists a nonempty open set $U\subseteq (\alpha, \beta)$ and $\epsilon_{0}>0$ satisfying $c_{m} \leq c(y)<c_{1}-2\varepsilon_{0}, y\in U.$ Second, for each $k\in \mathbb{N^*}$ the smallest eigenvalue $\beta_{k,1}$ of $A_{k}$ is given by  $\underset{w\in H^{1}_{0}(0,H)}{\inf} \frac{\int_{0}^{H}(\vert w'(y)\vert^{2} + \mu_{k}^{2}\vert w(y)\vert^{2}) {\rm d}y}{\int_{0}^{H}c^{-1}(y) \vert w(y)\vert^{2}) {\rm d}y}.$ Third, we know that $\underset{w\in H_{0}^{1}(U)}{\inf}\frac{\int_{U}\vert w'(y)\vert^{2}{\rm d}y}{\int_{U}\vert w(y)\vert^{2}{\rm d}y}$ is the smallest eigenvalue $\theta_{1}>0$ of the operator  $-\frac{d^{2}}{dy^{2}}$ defined on  $U$ with  Dirichlet boundary  conditions.
 So, we can write
 \begin{eqnarray}
 \label{equation-existencevp:1}
 \beta_{k,1}& \leq & \underset{w\in H_{0}^{1}(U)}{\inf}\frac{\int_{U}(\vert w'(y)\vert^{2} + \mu_{k}^{2}\vert w(y)\vert^{2}) {\rm d}y }{\int_{U}c^{-1}(y) \vert w(y)\vert^{2}) {\rm d}y}\nonumber\\
 & \leq & (c_{1}-2\varepsilon_{0}) \underset{w\in H_{0}^{1}(U)}{\inf}\frac{\int_{U}(\vert w'(y)\vert^{2} + \mu_{k}^{2}\vert w(y)\vert^{2}) {\rm d}y }{\int_{U} \vert w(y)\vert^{2} {\rm d}y}\nonumber\\
 & \leq & (c_{1}-2\varepsilon_{0}) (\mu^{2}_{k} + \theta_{1}).
 \end{eqnarray}
 Take $K>0$ sufficiently large, so that $\varepsilon_{0}\mu_{k}^{2}>(c_{1}-2\varepsilon_{0})\theta_{1}$ when $k>K.$ Then from \eqref{equation-existencevp:1} we get
 \begin{equation*}
 \beta_{k,1} < (c_{1}-\varepsilon_{0})\mu^{2}_{k}, \;\mbox{ for } k>K.
 \end{equation*}
 Then the sequence $(\beta_{k,1})_{k}$ satisfies Assumption ~\eqref{assumption:1} for $k>K.$ This proof exhibits only a sequence but we can build other sequences satisfying this assumption. We skip a detailed discussion of this fact for the sake of brevity.
 \end{proof}
\section{\textbf{NON-GUIDED WAVES }}
\label{section-nonguided waves:2}

\noindent
  An (infinite) set of non-guided normalized eigenfunctions  is characterized
  by the fact that in each layer $\Omega_{a,b}$ there is a uniform positive lower bound for the masses in the layer, valid for all  elements of the set.

 As observed in the introduction, for each eigenvalue $\lambda$ of $A,$ there exists at least one pair $(k,\ell)$ so that $\lambda=\beta_{k,\ell}$ is the $\ell$-th eigenvalue of $A_{k}.$
  Let $\lambda = \beta_{k,\ell}>0$ be an eigenvalue of $-c(y)\Delta$ in $L^2(\Omega,c(y)^{-1}dx'dy)$ and $u(y;\lambda,k):=u_{\lambda, k}(y)$ the normalized  associated (reduced) eigenfunction (as in \eqref{equation-introduction:1} and \eqref{equation-introduction:1bis}). The function $u(y;\lambda,k)$ satisfies
        \begin{eqnarray}
        \label{eqnonguide}
       u''(\ylambk)+Q(y;\lambda,k)u(\ylambk) &= &u''(\ylambk)+\mu^2_{k}\Big(\frac{\lambda}{\mu^2_{k}c(y)}-1\Big)u(\ylambk),
       \\
       & = & u''(\ylambk) + \mu_{k}^{2}\, p(\ylambk)u(\ylambk) = 0,\nonumber\\
       &p(\ylambk)= &\frac{\lambda}{\mu^2_{k}c(y)}-1.\nonumber
       \end{eqnarray}
        $ u(0;\lambda,k)=u(H;\lambda,k)=0$ and $\int_0^H u(y;\lambda,k)^2c(y)^{-1}dy=1.$

 We shall deal in this section with eigenvalues $\lambda$ such that
 (see ~\eqref{eqAceps})
$$(\mu_k,\lambda)\in  \mathscr{A}^{c}_\eps.$$
In particular, for such values we have $p(\ylambk)>0.$

A desirable way to treat this equation is by transforming the equation into a canonical equation of the type
\begin{equation*}
\frac{d^{2}\eta}{d\xi^{2}} + \mu_{k}^{2}\eta	 = \rho(\xi;\lambda,k)\eta
\end{equation*}
with some new variable $\xi$ and new unknown $\eta.$\\
This classical procedure, known as the {\it Liouville transformation}, can be carried out only if $c(y)$ is twice continuously differentiable, and is used in Subsection \ref{section-Liouville-ng:1} when $c\in C^{2}(\lbrack 0,H\rbrack),  (c_{M}+\varepsilon)\mu_{k}^{2}<\lambda<(c_{M}+\Lambda)\mu_{k}^{2}, \,0<\varepsilon <\Lambda.$

 The aim of the subsequent subsections is to claim that the set of eigenfunctions
associated with eigenvalues satisfying $(\mu_k,\lambda)\in\mathscr{A}^{c}_\eps ,$
for any diffusion coefficient $c(y)\in\mathscr{K}$ (see ~\eqref{eq-def-K})
consists of non-guided eigenfunctions when a particular sufficient condition is satisfied, with $c(y)$ less regular.


\noindent\\

Consider a pair $(\mu_k,\lambda)\in\mathscr{A}^{c}_\eps .$
Let $u(\ylambk)$ be a normalized solution  to \eqref{eqnonguide}, associated with $(\mu_{k}, \lambda).$

In view of ~\eqref{eqAceps}

\begin{equation}
\label{ineq-intermedzone:2}
p(\ylambk) \geq \frac{\eps}{c_M}, \quad y\in \lbrack 0, H\rbrack.
\end{equation}

Let
\begin{equation*}
Z(\lambda,k) = \{z_{0} =0<z_{1}<z_{2}<\ldots < z_{ \mathfrak{s}} = H, u(z_{i};\lambda, k) = 0, \quad 0\leq  i\leq \mathfrak{s}\}\subseteq \lbrack 0, H\rbrack
\end{equation*}
be the set of zeros of the function $u(\ylambk).$ It follows from \eqref{ineq-intermedzone:2} and the comparison principle ~\cite[Section X.6]{birkhoff},  ~\cite[Section 8.1]{coddington} that

\begin{equation}
\label{ineq-intermedzone:3}
z_{i+1} -z_{i}  \leq 2\pi\mu_k^{-1}\sqrt{ \frac{c_M}{\eps}}, \quad 0\leq i\leq \mathfrak{s} - 1.
\end{equation}

  The following claim extends ~\eqref{ineq-intermedzone:3} and will be useful in the sequel. It says that the distance between two consecutive zeros of an (oscillatory) eigenfunction can be made arbitrarily small, if we drop a finite number of eigenfunctions associated with ``low'' eigenvalues. The threshold $\lambda_{\alpha,\varepsilon}$ applies uniformly to all coefficients $c(y)\in \mathscr{K}.$

\begin{claim}
\label{claim:1}
Let $c(y)\in\mathscr{K}.$ For each $\alpha>0,\,\eps>0$ there exists $\lambda_{\alpha,\varepsilon}$ such that $\lambda>\lambda_{\alpha,\varepsilon}$
 implies
 $$z_{i+1} -z_{i} < \alpha,\quad 0\leq i\leq \mathfrak{s} - 1.$$

 In particular, $\lambda_{\alpha,\varepsilon}$ can be chosen uniformly for all $c(y)\in\mathscr{K}.$ For each $c(y)\in\mathscr{K}$ there are at most finitely many eigenvalues $\lambda<\lambda_{\alpha,\varepsilon}.$
\end{claim}
\begin{proof}
Recall that we are assuming $(\mu_k,\lambda) \in\mathscr{A}^{c}_\eps$ so that by ~\eqref{ineq-intermedzone:2}
$p(\ylambk)\geq \frac{\eps}{c_M}.$ Thus by the comparison principle it suffices to compare ~\eqref{eqnonguide} with the constant coefficient equation $v''+\gamma^2v=0$ for any large $\gamma.$ Pick $\mu_{0,\varepsilon}>\gamma\sqrt{\frac{c_M}{\eps}}.$  Then  $p(\ylambk)\mu_k^2>\gamma^2$ if $\mu_k>\mu_{0,\varepsilon}.$
 Next choose $\lambda_{0,\varepsilon}$ such that $\frac{\lambda_{0,\varepsilon}}{c_M}-\mu_{0,\varepsilon}^2>\gamma^2.$ Clearly for any $\lambda>\lambda_{0,\varepsilon}$ and $\mu_k\leq \mu_{0,\varepsilon}$ we have $\mu_k^2p(\ylambk)>\gamma^2.$
 Note that there are at most finitely many pairs $(\mu_k,\lambda)\in \mathscr{A}^{c}_\eps$ with $\mu_k\leq\mu_{0,\varepsilon}$ and $\lambda\leq \lambda_{0,\varepsilon}.$
   Finally, take $\lambda_{\alpha,\varepsilon}=\max \Big[(c_M+\eps)\mu_{0,\varepsilon}^2,\lambda_{0,\varepsilon}\Big].$
   \end{proof}

 In what follows  we assume that $c(y)\in \mathscr{K}$ and consider spectral values $(\mu_k,\lambda)\in\mathscr{A}^{c}_\eps.$ The following claim, an immediate consequence of \eqref{ineq-intermedzone:2},  will be useful in the sequel, when estimating masses of eigenfunctions in intervals.
\begin{claim}
\label{claim:2} Let $u(\ylambk)$  be a solution to ~\eqref{eqnonguide}, where
$(\mu_k,\lambda) \in\mathscr{A}^{c}_\eps.$
Then it is strictly convex (or concave) in every interval
\begin{equation*}
y\in (z_{i},z_{i+1}),\quad 0\leq i\leq \mathfrak{s}.
\label{claim-ng:1}
\end{equation*}
\end{claim}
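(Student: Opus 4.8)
The plan is to read off the sign of $u''$ directly from equation (\ref{eqnonguide}) on each nodal interval, using the positivity of $p$ established in (\ref{ineq-intermedzone:2}). First I would note that, since $z_i$ and $z_{i+1}$ are \emph{consecutive} zeros of the continuous function $u(\ylambk)$, the function does not vanish anywhere on the open interval $(z_i,z_{i+1})$ and therefore keeps a constant sign $\sigma\in\{+1,-1\}$ throughout it.

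Next, rewriting (\ref{eqnonguide}) as $u''(\ylambk)=-\mu_k^2\,p(\ylambk)\,u(\ylambk)$ and invoking $(\mu_k,\lambda)\in\mathscr{A}^{c}_\eps$, which by (\ref{ineq-intermedzone:2}) gives $p(\ylambk)\geq \eps/c_M>0$, I see that $u''$ and $u$ carry opposite signs wherever $u\neq 0$. Hence on $(z_i,z_{i+1})$ the sign of $u''$ is $-\sigma$: if $u>0$ there then $u''<0$ and $u$ is concave, while if $u<0$ there then $u''>0$ and $u$ is convex. When $c$ is continuous this already settles the claim in the classical sense.

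The only point requiring care — and the place where the word \emph{strict} must be justified — is the low regularity of $c$, which may be merely $L^\infty$, so that $p$ and $u''$ exist only almost everywhere. I would handle this by recalling that $u_{\lambda,k}\in H^2(0,H)\subseteq C^1([0,H])$, so that $u'$ is absolutely continuous and the identity $u''=-\mu_k^2\,p\,u$ holds a.e.\ on $(z_i,z_{i+1})$. On any compact subinterval where $u$ is bounded away from $0$ with sign $\sigma$, the product $-\mu_k^2\,p\,u$ is bounded away from $0$ with sign $-\sigma$ (as $p\geq \eps/c_M>0$ a.e.), so that $u'(y_2)-u'(y_1)=\int_{y_1}^{y_2}u''(s)\,ds$ is strictly of sign $-\sigma$ for $y_1<y_2$; thus $u'$ is strictly monotone, i.e.\ $u$ is strictly convex or concave. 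I do not anticipate a genuine obstacle: the entire content is the elementary sign computation, and the only (minor) subtlety is ensuring that the almost-everywhere sign of $u''$ still yields \emph{strict} monotonicity of $u'$ for the merely $H^2$ solution.
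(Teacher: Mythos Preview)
Your argument is correct and matches the paper's own reasoning: the paper simply states that the claim is ``an immediate consequence of \eqref{ineq-intermedzone:2}'', i.e., exactly the sign computation $u''=-\mu_k^2\,p\,u$ with $p>0$ that you carry out. Your additional care about the merely $L^\infty$ coefficient and the $H^2$ regularity of $u$ is a welcome elaboration that the paper leaves implicit.
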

\noindent
In particular, without further assumptions, the solutions are oscillatory and are  convex (or concave) between consecutive zeros. However, in various sub-intervals their amplitudes might decay to zero, hence concentrating in the complementary domain. It is precisely this behavior that we seek to exclude.

We start off with the classical case of a $C^2$ coefficient $c(y).$
In this case, a full asymptotic characterization of the eigenfunctions is possible.
\subsection{\textbf{THE REGULAR CASE: THE LIOUVILLE TRANSFORMATION WITH $c(y)\in C^{2}(\lbrack 0,H\rbrack)$}}
\label{section-Liouville-ng:1}
$\empty$\\
This case is of interest, as it yields an almost sinusoidal behavior of the eigenfunctions, not only estimates on the mass in a band.

\begin{thm}
\label{thm-ng-Liouville:1-new} Let $\Lambda>\eps$ and set
$$\mathscr{A}^{c}_{\eps,\Lambda}= \mathscr{A}^{c}_\eps\cap \set{\lambda\leq(c_{M} + \Lambda)\mu_{k}^{2}}.$$

Assume  that $c(y)\in C^{2}(\lbrack 0,H\rbrack).$ Let $u(\ylambk)$ be a normalized solution to \eqref{eqnonguide}. Then for every interval $(a,b)\subseteq (0,H)$
\begin{equation}
\label{ineq-ng-Liouville:1-new}
\inf_{(\mu_{k},\lambda)\in \mathscr{A}^{c}_{\eps,\Lambda}}\int_{a}^{b} u(\ylambk)^{2}c(y)^{-1}dy>0,
\end{equation}
that is equivalent to $\inf_{(\mu_{k},\lambda)\in \mathscr{A}^{c}_{\eps,\Lambda}}\int_{\Omega_{a,b}}v_{\lambda}(x)^{2}\tilde{c}(x)^{-1}dx>0.$
\end{thm}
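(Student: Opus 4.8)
The plan is to carry out the Liouville--Green transformation explicitly and then track the \emph{amplitude} of the transformed solution, exploiting that on $\mathscr{A}^{c}_{\eps,\Lambda}$ the coefficient in \eqref{eqnonguide} is uniformly controlled. Writing $\kappa=\lambda/\mu_k^2$, the defining inequalities of $\mathscr{A}^{c}_{\eps,\Lambda}$ read $c_M+\eps<\kappa\le c_M+\Lambda$, so that by \eqref{ineq-intermedzone:2} one has $\frac{\eps}{c_M}\le p(\ylambk)\le \frac{c_M+\Lambda}{c_m}-1$, while $p'=-\kappa\,c'/c^2$ and $p''=-\kappa\,(c''/c^2-2(c')^2/c^3)$ are bounded by a constant depending only on $c_m,c_M,\Lambda$ and $\|c\|_{C^2([0,H])}$. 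It is precisely here that the hypothesis $c\in C^{2}([0,H])$ enters. Set $t=t(\ylambk)=\int_0^y\sqrt{p(s;\lambda,k)}\,ds$ and $w=p^{1/4}u$. A direct computation turns \eqref{eqnonguide} into the canonical form $\ddot w+\mu_k^2 w=\psi w$, where the Liouville potential $\psi$ is a fixed algebraic expression in $p,p',p''$; by the above it satisfies $|\psi|\le M$ uniformly on $\mathscr{A}^{c}_{\eps,\Lambda}$, with $M$ independent of $(\mu_k,\lambda)$.

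Next I would reduce the target quantity. Since $dy=p^{-1/2}dt$ and $u^2=p^{-1/2}w^2$, we have
\[
\int_a^b u(\ylambk)^2 c(y)^{-1}\,dy=\int_{t(a)}^{t(b)}(pc)^{-1}w^2\,dt,
\]
and as $(pc)^{-1}$ lies between two positive constants it suffices to bound $\int_{t(a)}^{t(b)}w^2\,dt$ from below. The image interval has length bounded below uniformly, $t(b)-t(a)=\int_a^b\sqrt{p}\,dy\ge\sqrt{\eps/c_M}\,(b-a)=:\delta>0$. Now introduce the amplitude $\rho^2=w^2+\mu_k^{-2}\dot w^2$. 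Using $\ddot w=(\psi-\mu_k^2)w$ one gets $\frac{d}{dt}\rho^2=2\mu_k^{-2}\psi w\dot w$, and since $2|w\dot w|\le\mu_k\rho^2$ this gives $\left|\frac{d}{dt}\ln\rho^2\right|\le M/\mu_k$. Hence $\rho^2$ varies across $[0,t(H)]$ by at most the factor $e^{\pm M\,t(H)/\mu_k}$: it is \emph{nearly constant} once $\mu_k$ is large.

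To convert this into the desired lower bound I would use the identity obtained by integrating $\mu_k^{-2}\dot w^2$ by parts,
\[
2\int_{t_1}^{t_2}w^2\,dt=\int_{t_1}^{t_2}\rho^2\,dt-\mu_k^{-2}\big[\,w\dot w\,\big]_{t_1}^{t_2}+\mu_k^{-2}\int_{t_1}^{t_2}\psi w^2\,dt.
\]
Applied on the full interval (where the boundary term vanishes, since $w=p^{1/4}u$ vanishes at $0$ and $H$) together with the normalization $\int_0^{t(H)}(pc)^{-1}w^2\,dt=1$, this pins $\rho^2$ between two positive constants for $\mu_k$ large. Applied on $[t(a),t(b)]$, the boundary and $\psi$ terms are $O(\mu_k^{-1})$ corrections, so $\int_{t(a)}^{t(b)}w^2\,dt\gtrsim\rho^2\,\delta$ uniformly. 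For the remaining low frequencies I note that for fixed $\mu_k$ the band $(c_M+\eps)\mu_k^2<\lambda\le(c_M+\Lambda)\mu_k^2$ contains only finitely many eigenvalues of $A_k$, and only finitely many $k$ satisfy $\mu_k\le\mu_0$; thus $\mathscr{A}^{c}_{\eps,\Lambda}\cap\{\mu_k\le\mu_0\}$ is \emph{finite}. Each of its members gives $\int_a^b u^2c^{-1}dy>0$ strictly, since a nontrivial solution of the second-order ODE \eqref{eqnonguide} cannot vanish on an interval. Taking the minimum of this finite collection with the uniform high-frequency bound yields the positive infimum \eqref{ineq-ng-Liouville:1-new}.

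I expect the main obstacle to be the amplitude step: turning the heuristic ``nearly sinusoidal behaviour'' into a genuine \emph{uniform} lower bound for $\int w^2$ over a fixed sub-interval, and checking that every constant in sight ($M$, the two-sided bounds on $\rho^2$, and the threshold $\mu_0$) is uniform over all of $\mathscr{A}^{c}_{\eps,\Lambda}$ and not merely for a single equation. This is exactly where the boundedness of $\kappa=\lambda/\mu_k^2$ and the $C^2$ regularity of $c$ are indispensable, and it explains why the same conclusion becomes delicate once $c$ is less regular.
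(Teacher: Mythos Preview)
Your proof is correct and begins exactly as the paper does: the Liouville transformation $t=\int_0^y\sqrt{p}\,ds$, $w=p^{1/4}u$ reduces \eqref{eqnonguide} to $\ddot w+\mu_k^2 w=\psi w$ with $\psi$ uniformly bounded on $\mathscr{A}^{c}_{\eps,\Lambda}$, and this is precisely where the $C^2$ hypothesis and the two-sided bound on $\kappa=\lambda/\mu_k^2$ are used. Where you diverge is in the step that follows. The paper rewrites the transformed equation as a Volterra integral equation, $\eta(\xi)=\alpha\sin(\mu_k\xi)+\mu_k^{-1}\int_0^\xi\sin(\mu_k(\xi-\tau))\psi\,\eta\,d\tau$, obtains the pointwise approximation $|\eta-\alpha\sin(\mu_k\xi)|\le\delta$ for $\mu_k$ large, and then uses normalization to pin $|\alpha|$ between two positive constants; the lower bound on $\int_{\xi_1}^{\xi_2}\eta^2$ follows. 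You instead track the amplitude $\rho^2=w^2+\mu_k^{-2}\dot w^2$ directly, show $|\tfrac{d}{dt}\ln\rho^2|\le M/\mu_k$, and combine near-constancy of $\rho^2$ with your integration-by-parts identity linking $\int w^2$ and $\int\rho^2$. This is essentially the device the paper deploys later in the Lipschitz case (see \eqref{eq-intermedzone:6}--\eqref{ineq-intermedzone:12}), but applied here to the \emph{transformed} equation, where it is particularly clean since the leading coefficient $\mu_k^2$ is constant. The paper's Volterra route buys the extra asymptotic information highlighted in Remark~\ref{rem-ng:1} (the eigenfunctions are sinusoidal to leading order), which your amplitude argument does not produce; conversely, your route reaches the $L^2$ lower bound more directly and with fewer intermediate objects. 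The handling of the finitely many low-frequency pairs is the same in both arguments.
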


\begin{proof} The hypotheses imposed in the theorem imply that
\begin{equation*}
\label{ineq-intermedzone:1}
\frac{c_{M}}{c(y)} - 1 +\frac{\varepsilon}{c(y)}\leq p(\ylambk))\leq \frac{c_{M}}{c(y)} - 1 +\frac{\Lambda}{c(y)},\quad y\in \lbrack 0, H\rbrack.
\end{equation*}

 Thus there exist constants $0<\kappa_{1}=\frac{\eps}{c_M}<\kappa_{2}=\frac{c_M+\Lambda}{c_m}$  such that
\begin{equation}
\label{ineq-ing-Liouville:2}
\kappa_{1} \leq p(\ylambk)<\kappa_{2}, \quad y\in \lbrack 0, H\rbrack,\,\,(\mu_k,\lambda)\in\mathscr{A}^{c}_{\eps,\Lambda}.
\end{equation}

 We apply the Liouville transformation ~\cite[Chapter IV]{erdelyi}:

       \be\label{eqliouv}
       \xi=\int_0^y\sqrt{p(t;\lambda,k)}dt,\quad \eta(\xi;\lambda,k)=[p(y;\lambda,k)]^{\frac14}u(\ylambk).
       \ee
     Note that $\xi\in[0,\overline{H}],\,\,\overline{H}=\int_0^H \sqrt{\frac{\lambda}{\mu^2_{k}c(t)}-1}\,\,dt\leq H\sqrt{\frac{c_M+\Lambda}{c_m}-1}.$

     The function $\eta(\xi;\lambda,k)$ satisfies the equation
     \be\label{eqetaxi}
     \frac{d^2\eta}{d\xi^2}+\mu_k^2\eta=\rho(\xi;\lambda,k)\eta,
     \ee
       where
       $$\rho(\xi;\lambda,k)=\frac14\frac{p''(\ylambk)}{p(\ylambk)^2}-
       \frac{5}{16}\frac{p'(\ylambk)^2}{p(\ylambk)^3}.$$

    Note that the form of ~\eqref{eqetaxi} is the starting point for the asymptotic behavior of solutions involving potential perturbations. However in our case  the potential depends on the spectral parameter.

        Observe that under our assumptions
         the family
          $$\mathscr{B}=\set{\rho(\xi;\lambda,k),\quad (\mu_k,\lambda)\in\mathscr{A}^{c}_{\eps,\Lambda}}$$
           is \textbf{uniformly bounded}.

       Since $\eta(0;\lambda,k)=0,$ Equation ~\eqref{eqetaxi}
       entails, with $\alpha=\mu_k^{-1}\eta'(0;\lambda,k),$
       \be\label{eqetaxiinteg}
       \eta(\xi;\lambda,k)=\alpha \sin(\mu_k\xi)+
       \mu_k^{-1}\int_0^\xi\sin(\mu_k(\xi-\tau))
       \rho(\tau;\lambda,k)
       \eta(\tau;\lambda,k)d\tau,\quad
       \xi\in[0,\overline{H}].
       \ee

       The uniform boundedness of the family $\mathscr{B}$ implies that
              the Volterra integral equation
       ~\eqref{eqetaxiinteg} is
       solvable for any sufficiently large $\mu_k$ and furthermore, for any small $\delta>0$, there exists $\mu_{0}>0$ so that
         \be\label{eqcompareetasin}
         |\eta(\xi;\lambda,k)-\alpha \sin(\mu_k\xi)|\leq \delta,\quad \xi\in[0,\overline{H}],\,\,\mu_k>\mu_0.
         \ee
         We now make the following observations.
         \begin{itemize}
         \item Recall that $\int_0^H u(\ylambk)^2c(y)^{-1}dy=1,$ hence in light of ~\eqref{ineq-ing-Liouville:2} and $\lambda\leq (c_{M} + \Lambda)\mu_{k}^{2}$
          there exist two constants $0<\zeta_1<\zeta_2<\infty$ so that
             \be\label{eqeta2integ}
            \zeta_1\leq \int_0^{\overline{H}}\eta(\xi;\lambda,k)^2d\xi\leq \zeta_2,\quad \mu_{k} >\mu_0.
             \ee
             \item It follows from ~\eqref{eqcompareetasin}-\eqref{eqeta2integ} that there exist two constants $0<r_1<r_2<\infty$ so that
             \be\label{eqalphar1r2}
                r_1\leq |\alpha|\leq r_2, \quad \mu_{k}>\mu_0.
             \ee
         \end{itemize}
             We conclude (again from ~\eqref{eqcompareetasin}) that for every interval $(\xi_1,\xi_2)\subseteq [0,\overline{H}]$
             $$ \int_{\xi_1}^{\xi_2}\eta(\xi;\lambda,k)^2d\xi\geq \alpha^2\frac{\xi_2-\xi_1}{4}-\delta (\xi_2-\xi_1),\quad \mu_{k}>\mu_0.$$
             Note that there are at most finitely many normalized eigenfunctions associated
              with values $\mu_k<\mu_0,$ since $\lambda$ is bounded from above.

              Switching back to the original variable $y$ and the function $u(\ylambk)$  we get ~\eqref{ineq-ng-Liouville:1-new}.
              \end{proof}
  \begin{rem}
  \label{rem-ng:1} Note that the hypotheses of Theorem \ref{thm-ng-Liouville:1-new} entail not only the conclusion that the eigenfunctions do not concentrate in sub-domains of $(0,H)$ but also their asymptotic (sinusoidal) form, as in \eqref{eqcompareetasin}.
  \end{rem}
  \subsection{\textbf{BEYOND THE REGULAR CASE (SEE THEOREM ~\ref{theo-C1-noguided})}}
\label{subsection-sufficient condition-ng}
$\empty$\\
 The implications of the assumption that $c(y)$ is subject only to the {\textbf{\textit{minimal amplitude hypothesis}}}  ( Definition ~\ref{definition-hypo-minam:1}) will now be studied. No regularity is required of $c(y),$
and only condition $\mbox{\bf(H)}$ (see ~\eqref{eqassumptioncy}) is imposed.\\
We have already seen that the lack of regularity does not affect the oscillatory character of the solutions. The remaining issue is to see that the masses of the oscillatory solutions  $\set{ u(\ylambk),\,(\mu_{k},\lambda)\in\mathscr{A}^{c}_\eps}$  in any interval remain uniformly bounded away from zero.\\ This is addressed in the following theorem
which is a somewhat more detailed form of  Theorem  ~\ref{theo-C1-noguided}. Its proof is straightforward, reducing the non-concentration issue to a study of the minimal amplitude hypothesis for various functional classes.

\begin{thm}
 \label{propcyirreg} Let $c(y)\in \mathscr{K}$ satisfy the minimal amplitude hypothesis (Definition ~\ref{definition-hypo-minam:1}).
 Consider the family $\set{ u(\ylambk),\,(\mu_{k},\lambda)\in\mathscr{A}^{c}_\eps}$ of  normalized solutions to \eqref{eqnonguide}.

                Then, for every interval $(a,b)\subseteq (0,H),$ there exist  constants
                \begin{itemize}
                \item $d>0$ depending on $\eps,\,c_M,\,c_m,\, b-a,\,\mathfrak{r_c},$
                    \item $\lambda_0>0$ depending on $\eps,\,c_M,\,c_m,\, b-a$
                \end{itemize}
                              such that
                \be\label{equyneq0}
                \int_{a}^{b}\, u(y;\lambda,k)^2c(y)^{-1}dy\geq d,\quad \,\,
                 \lambda>\lambda_0.
                \ee
                 This estimate is equivalent to  $\int_{\Omega_{a,b}}\,v_{\lambda}(x)^2\tilde{c}(x)^{-1}dx\geq d,$   where $v_\lambda$ is the eigenfunction of $-\tilde{c}\Delta$ associated with $\lambda.$
             \end{thm}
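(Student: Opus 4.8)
The plan is to leverage the minimal amplitude hypothesis to bound the solution from below in $L^2$ over the interval $(a,b)$, reducing everything to the geometry of the $(u,u')$ phase plane. The quantity $E(y):=u(\ylambk)^2+u'(\ylambk)^2$ is the squared distance to the origin in phase space, and by hypothesis $E(y)\geq\mathfrak{r}_c^2>0$ for all $y\in[0,H]$ and all $(\mu_k,\lambda)\in\mathscr{A}^c_\eps$. The heart of the argument is that $E$ cannot oscillate too wildly: differentiating and using \eqref{eqnonguide} gives $E'(y)=2u'(u''+u)=2u'u(1-\mu_k^2 p(\ylambk))$, so $|E'(y)|\leq 2|u||u'|\,|1-\mu_k^2 p|\leq E(y)\cdot|1-\mu_k^2 p(\ylambk)|$. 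Since $\mu_k^2 p=\frac{\lambda}{c(y)}-\mu_k^2$ grows like $\lambda$, this crude bound is useless on its own; instead I would work on a single interval between consecutive zeros, whose length is at most $2\pi\mu_k^{-1}\sqrt{c_M/\eps}$ by \eqref{ineq-intermedzone:3}, and exploit that on such a short interval $E$ can change only by a controlled multiplicative factor. This is the standard Prüfer-type control alluded to in Remark \ref{remprufer}.

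Concretely, first I would fix $\alpha>0$ small and invoke Claim \ref{claim:1} to obtain $\lambda_{\alpha,\eps}$ so that for $\lambda>\lambda_{\alpha,\eps}$ every gap between consecutive zeros is shorter than $\alpha$; choosing $\alpha<(b-a)$ guarantees that the interval $(a,b)$ contains at least one full hump, i.e.\ a subinterval $(z_i,z_{i+1})$ with $u(z_i)=u(z_{i+1})=0$ and $u$ strictly of one sign (strictly convex or concave) in between by Claim \ref{claim:2}. On such a hump the amplitude hypothesis forces a genuine bulge: at an interior zero $z_i$ we have $u(z_i)=0$ so $E(z_i)=u'(z_i)^2\geq\mathfrak{r}_c^2$, hence $|u'(z_i)|\geq\mathfrak{r}_c$, and the solution must rise to a maximum of height at least comparable to $\mathfrak{r}_c$ times the half-width of the hump. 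The cleanest route is to integrate: on $(z_i,z_{i+1})$ write $u(y)=\int_{z_i}^y u'(t)\,dt$ and control the oscillation of $u'$ using that $u''=-\mu_k^2 p\,u$ keeps the same sign, so $u'$ is monotone on each half of the hump; this yields a lower bound $\int_{z_i}^{z_{i+1}}u(\ylambk)^2\,dy\geq c\,\mathfrak{r}_c^2(z_{i+1}-z_i)^3\mu_k^4 p^2$ type estimate, which after inserting $z_{i+1}-z_i\sim\mu_k^{-1}$ and $\mu_k^2 p\sim\lambda/\mu_k^2$ I would simplify to a quantity bounded below independently of $(\mu_k,\lambda)$. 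Finally, dividing by $c(y)\leq c_M$ and summing over the (at least one) complete humps inside $(a,b)$ delivers the uniform lower bound $d>0$ in \eqref{equyneq0}, with $\lambda_0=\lambda_{\alpha,\eps}$.

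The main obstacle I anticipate is converting the pointwise amplitude bound $E(y)\geq\mathfrak{r}_c^2$ into an \emph{integrated} lower bound on $\int u^2$ without losing the uniformity over $\mathscr{A}^c_\eps$, because the two terms $u^2$ and $u'^2$ trade off: the hypothesis might be saturated at a point where $u'$ is large but $u$ is tiny (precisely at the zeros), so one cannot simply integrate $E\geq\mathfrak{r}_c^2$ to get $\int u^2$ large. The resolution is that $u$ and $u'$ cannot both stay small throughout a hump, and one must quantify how a lower bound on $|u'|$ at the zero propagates into a lower bound on $\max|u|$ and then into $\int u^2$; this requires carefully using the convexity from Claim \ref{claim:2} together with the gap estimate \eqref{ineq-intermedzone:3}, and tracking that the resulting constant depends only on $\eps,c_M,c_m,b-a$ and $\mathfrak{r}_c$, exactly as asserted. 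Once this phase-plane propagation estimate is established on one hump, the passage to the full interval and the equivalence with $\int_{\Omega_{a,b}}v_\lambda^2\tilde{c}^{-1}\,dx$ via the separation of variables $v_\lambda=\phi_k u_{\lambda,k}$ and normalization of $\phi_k$ is routine.
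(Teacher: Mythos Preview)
There is a genuine gap in your propagation step. You apply the amplitude hypothesis at the \emph{zeros} $z_i$, obtaining $|u'(z_i)|\geq\mathfrak{r}_c$, and then try to push this forward to a lower bound on the hump height $M=|u(z_{i+\frac12})|$. But the scaling is against you: on a hump the natural size of $|u'|$ is $\mu_k\sqrt{p}$ times that of $|u|$, so a bound $|u'(z_i)|\geq\mathfrak{r}_c$ only forces $M\gtrsim \mathfrak{r}_c/(\mu_k\sqrt{p})$, which tends to $0$ as $\lambda\to\infty$. Concretely, integrating $u''=-\mu_k^2 p\,u$ from $z_i$ to $z_{i+\frac12}$ gives $u'(z_i)=\int_{z_i}^{z_{i+\frac12}}\mu_k^2 p\,u\,dy\leq \mu_k^2(\max p)\,M\,(z_{i+\frac12}-z_i)$, hence $M\geq \mathfrak{r}_c/\bigl(\mu_k^2(\max p)\cdot(z_{i+\frac12}-z_i)\bigr)$; inserting the gap estimate \eqref{ineq-intermedzone:3} this is $\gtrsim \mathfrak{r}_c\sqrt{\min p}/(\mu_k\max p)\to 0$. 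Your claimed ``$\max|u|\gtrsim\mathfrak{r}_c\times(\text{half-width})$'' goes the wrong way (concavity gives $u(y)\leq u'(z_i)(y-z_i)$, an \emph{upper} bound), and your displayed ``type estimate'' $\int_{z_i}^{z_{i+1}}u^2\geq c\,\mathfrak{r}_c^2(z_{i+1}-z_i)^3\mu_k^4p^2$ is dimensionally inconsistent: in the constant-coefficient model it blows up like $\mu_k\sqrt{p}$ while the actual integral is $O\bigl((\mu_k\sqrt{p})^{-1}\bigr)$.

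The remedy is the one observation you are missing: apply the minimal amplitude hypothesis at the \emph{maximum point} $z_{i+\frac12}$, where $u'=0$. Then immediately $u(z_{i+\frac12})^2=u(z_{i+\frac12})^2+u'(z_{i+\frac12})^2\geq\mathfrak{r}_c^2$, with no propagation needed. From there, convexity/concavity lets you compare $u$ on $(z_i,z_{i+1})$ with the piecewise-linear ``hat'' of height $|u(z_{i+\frac12})|$, yielding $\int_{z_i}^{z_{i+1}}u^2\,dy\geq\frac13\mathfrak{r}_c^2(z_{i+1}-z_i)$. Summing over the humps contained in $(a,b)$ (use Claim~\ref{claim:1} to choose $\lambda_0$ so that these cover at least $\frac23(b-a)$) and dividing by $c(y)\leq c_M$ gives \eqref{equyneq0} with $d=\frac{2}{9}\mathfrak{r}_c^2(b-a)/c_M$. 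This is exactly the paper's argument, and it bypasses the phase-plane propagation issue you correctly flagged as the main obstacle.
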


             \begin{proof}

 By Claim ~\ref{claim:2}  the graph of $u(\ylambk)$ is convex (or concave) in $(z_{i},z_{i+1})$ and there exists a unique point $z_{i+\frac{1}{2}}\in (z_{i},z_{i+1})$  so that
 \begin{equation}
\label{eq-intermedzone:2}
u(z_{i+\frac{1}{2}};\lambda,k)^{2}\mbox{ is maximal in the interval and } u'(z_{i+\frac{1}{2}};\lambda,k) =0.
 \end{equation}
In particular,
 \begin{equation}
\label{ineq-intermedzone:5}
u(z_{i+\frac{1}{2}};\lambda,k)^{2}\geq \mathfrak{r}_c^{2}, \quad 0\leq i \leq \mathfrak{s}.
\end{equation}
Let $\bar{u}(\ylambk)$ be the continuous piecewise linear function connecting $(z_{i},0)$ to $( z_{i+\frac{1}{2}}, u(z_{i+\frac{1}{2}};\lambda,k))$ then to $(z_{i+1},0).$ It is readily verified  that
 \begin{equation}
\label{eq-intermedzone:3}
 \int_{z_{i}}^{z_{i+1}}\bar{u}(\ylambk)^{2}dy = \frac{1}{3}(z_{i+1}-z_{i})u(z_{i+\frac{1}{2}};\lambda,k)^{2},
 \end{equation}
and, due to the convexity,
  \begin{equation*}
\label{ineq-intermedzone:6}
 \int_{z_{i}}^{z_{i+1}}u(\ylambk)^{2}dy \geq  \int_{z_{i}}^{z_{i+1}}\bar{u}(\ylambk)^{2}dy .
\end{equation*}
By ~\eqref{eq-intermedzone:3} and ~\eqref{ineq-intermedzone:5}
 \begin{equation}
\label{ineq-intermedzone:7}
 \int_{z_{i}}^{z_{i+1}}u(\ylambk)^{2}dy \geq \frac{1}{3}\mathfrak{r}_c^{2}(z_{i+1}-z_{i}).
\end{equation}

Let $(a,b)\subseteq \lbrack 0, H\rbrack.$ In view of Claim ~\ref{claim:1} we have $\lambda_0=\lambda_0(b-a,\eps)>0$ such  that for any $\lambda>\lambda_0$ there are zeros $a\leq z_{i_0}<z_{i_{1}}\leq b$ and $|a-z_{i_0}|+|b-z_{i_{1}}|<\frac13(b-a).$ In particular, summing in ~\eqref{ineq-intermedzone:7} yields
\begin{equation}
\label{ineq-intermedzone:8-new}
 \int_{a}^{b}u(\ylambk)^{2}dy \geq \int_{z_{i_0}}^{z_{i_1}}u(\ylambk)^{2}dy \geq\frac13\cdot\frac23\mathfrak{r}_c^{2}(b-a),
\end{equation}

 which concludes the proof of the theorem. \end{proof}

\subsection{\textbf{MORE ON THE MINIMAL AMPLITUDE HYPOTHESIS}}$\empty$\\

Recall that the minimal amplitude was defined by ~\eqref{equation-definition-hypo-minam:1}:

$$ \mathfrak{r}_c^{2} = \inf_{\begin{array}{c} y\in \lbrack 0, H\rbrack,\\ (\mu_k,\lambda)\in\mathscr{A}^{c}_\eps\end{array}}\lbrack u_{\lambda,k}(y)^{2} + u'_{\lambda,k}(y)^{2}\rbrack.$$
We now consider this quantity in more detail.

The next proposition supplements Equation \eqref{ineq-intermedzone:5}.
\begin{prop}
\label{prop-minam:0}
  Let $u(y;\lambda,k)$ a normalized solution to \eqref{eqnonguide} where $c(y)\in \mathscr{K}$ and $(\mu_{k},\lambda)\in\mathscr{A}^{c}_\eps.$ Let
\begin{equation}
\label{definition-minimal-amplitude:2}
\mathfrak{r}_{c,\mu_{k},\lambda}^{2}= \inf_{y\in \lbrack 0, H\rbrack}\lbrack u(y;\lambda,k)^{2} + u'(y;\lambda,k)^{2}\rbrack.
\end{equation}
Then there exists a positive
$\tilde{\lambda}_0$ such that for any  $\lambda>\tilde{\lambda}_0,$
\begin{equation}
\label{definition-minimal-amplitude:3}
\mathfrak{r}_{c,\mu_{k},\lambda}^{2} =\min_{0\leq i < \mathfrak{s}} u(z_{i + \frac{1}{2}};\lambda,k)^{2}.
\end{equation}
\end{prop}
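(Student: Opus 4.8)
The plan is to study the ``energy'' function $E(y):=u(\ylambk)^{2}+u'(\ylambk)^{2}$ for a fixed admissible pair $(\mu_k,\lambda)$ and to show that, once $\lambda$ is large, the local minima of $E$ occur exactly at the critical points $z_{i+\frac{1}{2}}$ (where $u'=0$), while the values of $E$ at the nodes of $u$ dominate them. First I would differentiate $E$ and substitute the equation \eqref{eqnonguide} in the form $u''=-\mu_k^{2}p(\ylambk)u$, obtaining the identity
\be
E'(y)=2\,u(\ylambk)\,u'(\ylambk)\,\bigl(1-\mu_k^{2}p(\ylambk)\bigr).
\ee
Thus the sign of $E'$ is controlled by the product $u\,u'$ together with the sign of the factor $1-\mu_k^{2}p$.

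The next step, which I expect to be the only delicate one, is to force $1-\mu_k^{2}p(\ylambk)<0$ uniformly on $[0,H]$. Here I would exploit the very definition of $\mathscr{A}^{c}_\eps$: since $(\mu_k,\lambda)\in\mathscr{A}^{c}_\eps$ we have both $\lambda\geq(c_M+\eps)\mu_k^{2}$ and $c(y)\leq c_M$, whence
\be
\mu_k^{2}p(\ylambk)=\frac{\lambda}{c(y)}-\mu_k^{2}\geq \frac{\lambda}{c_M}-\frac{\lambda}{c_M+\eps}=\frac{\eps\,\lambda}{c_M(c_M+\eps)}.
\ee
Consequently it suffices to set $\tilde{\lambda}_0:=c_M(c_M+\eps)/\eps$; for every $\lambda>\tilde{\lambda}_0$ one gets $\mu_k^{2}p(\ylambk)>1$, i.e. $1-\mu_k^{2}p(\ylambk)<0$ for all $y\in[0,H]$. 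The essential point is this lower bound on $\mu_k^{2}p$: it crucially uses the constraint $\lambda\geq(c_M+\eps)\mu_k^{2}$ to control $\mu_k^{2}$ by $\lambda$, so that the spectral-parameter-dependent ``potential'' $\mu_k^{2}p$ exceeds $1$ regardless of how small $\mu_k$ may be. Everything after this is sign bookkeeping.

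With $1-\mu_k^{2}p<0$ in force, I would run the elementary analysis on a single lap $(z_i,z_{i+1})$, taking $u>0$ there (the concave case is symmetric, by Claim \ref{claim:2}). On $(z_i,z_{i+\frac{1}{2}})$ the solution rises from $0$ to its maximum, so $u'>0$, the product $u\,u'$ is positive, and hence $E'<0$: $E$ decreases. On $(z_{i+\frac{1}{2}},z_{i+1})$ one has $u>0$, $u'<0$, so $u\,u'<0$ and $E'>0$: $E$ increases. Therefore $E$ has a strict minimum over $[z_i,z_{i+1}]$ at $y=z_{i+\frac{1}{2}}$, where $u'=0$ gives $E(z_{i+\frac{1}{2}})=u(z_{i+\frac{1}{2}};\lambda,k)^{2}$; in particular $E(z_i)=u'(z_i)^{2}\geq E(z_{i+\frac{1}{2}})$ and the same at the right endpoint, so every node is a local maximum of $E$ and no minimum is lost there.

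Finally, since the nodes $z_0=0<\cdots<z_{\mathfrak{s}}=H$ partition $[0,H]$ into finitely many such laps, the infimum of $E$ is the least of the finitely many lap-minima and is attained. Thus
\be
\mathfrak{r}_{c,\mu_k,\lambda}^{2}=\inf_{y\in[0,H]}E(y)=\min_{0\leq i<\mathfrak{s}}E(z_{i+\frac{1}{2}})=\min_{0\leq i<\mathfrak{s}}u(z_{i+\frac{1}{2}};\lambda,k)^{2},
\ee
which is precisely \eqref{definition-minimal-amplitude:3}, completing the argument.
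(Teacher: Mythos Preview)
Your proof is correct and follows essentially the same route as the paper: both compute $E'(y)=2uu'(1-\mu_k^{2}p)$, arrange $\mu_k^{2}p>1$ via a threshold $\tilde{\lambda}_0$, and then read off the monotonicity of $E$ on each half-lap to conclude that the minimum is attained at some $z_{i+\frac{1}{2}}$. The only difference is cosmetic: the paper appeals to Claim~\ref{claim:1} for the existence of $\tilde{\lambda}_0$, whereas you give the explicit value $\tilde{\lambda}_0=c_M(c_M+\eps)/\eps$ directly from the inequality $\mu_k^{2}p\geq \eps\lambda/\bigl(c_M(c_M+\eps)\bigr)$.
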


\begin{proof} Suppose that $u(y;\lambda,k)$ is convex in the interval $(z_{i}, z_{i+1})$ (see Claim ~\ref{claim:2}). In light of Equation  ~\eqref{eqnonguide} the function $u(y;\lambda,k)$ satisfies
\begin{equation}
\label{equation-minimal-amplitude:1}
u''(y;\lambda,k) +\mu^{2}_{k}p(y;\lambda,k) u(y;\lambda,k) = 0,
\end{equation}
hence
\begin{equation}
\label{equation-minimal-amplitude:2}
\frac{d}{dy}\lbrack u(y;\lambda,k)^{2} + u'(y;\lambda,k)^{2}\rbrack = 2 \big(1-\mu^{2}_{k}p(y;\lambda,k)\big) u'(y;\lambda,k)u(y;\lambda,k).
\end{equation}
In the interval we have $u(y;\lambda,k)<0$ and by convexity
\begin{equation*}
u'(y;\lambda,k)\left\lbrace\begin{array}{l}
<0, y\in \lbrack z_{i}, z_{i+\frac{1}{2}}),\\
>0, y\in ( z_{i+\frac{1}{2}}, z_{i+1}\rbrack.
\end{array}\right.
\end{equation*}
               As in the proof of  Claim ~\ref{claim:1}, we can now find  $\tilde{\lambda}_0$ so that $\mu^{2}_{k}p(y;\lambda,k)>1$ for $\lambda> \tilde{\lambda}_0.$
               Inserting this in ~\eqref{equation-minimal-amplitude:2} we obtain

\begin{equation*}
\frac{d}{dy}\lbrack u(y;\lambda,k)^{2} + u'(y;\lambda,k)^{2}\rbrack\left\lbrace\begin{array}{l}
<0, y\in ( z_{i}, z_{i+\frac{1}{2}}),\\
>0, y\in ( z_{i+\frac{1}{2}}, z_{i+1}).
\end{array}\right.
\end{equation*}
Equation \eqref{definition-minimal-amplitude:3} clearly follows by considering all intervals.
\end{proof}

\begin{cor}
\label{cor-ng-minam-hyp:1}
If  $\lambda>\tilde{\lambda}_0$ then
for every $0\leq i < \mathfrak{s}$
\begin{equation}
\min \lbrack \vert u'(z_{i};\lambda,k)\vert^{2}, \vert u'(z_{i+1};\lambda,k)\vert^{2}\rbrack\geq \vert u(z_{i+\frac{1}{2}};\lambda,k)\vert^{2}.
\end{equation}
\end{cor}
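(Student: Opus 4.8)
The corollary follows from the monotonicity of the energy-type quantity $E(y):=u(y;\lambda,k)^2+u'(y;\lambda,k)^2$ that was established in the proof of Proposition~\ref{prop-minam:0}. The plan is to evaluate $E$ at the three relevant points $z_i$, $z_{i+\frac12}$, $z_{i+1}$ and to exploit the fact that $u$ vanishes at the zeros $z_i$, $z_{i+1}$.

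First I would record the explicit values of $E$ at these points. Since $u(z_i;\lambda,k)=0$, we have $E(z_i)=u'(z_i;\lambda,k)^2$, and similarly $E(z_{i+1})=u'(z_{i+1};\lambda,k)^2$. At the interior critical point we have $u'(z_{i+\frac12};\lambda,k)=0$ by~\eqref{eq-intermedzone:2}, so $E(z_{i+\frac12})=u(z_{i+\frac12};\lambda,k)^2$. Thus the three quantities appearing in the desired inequality are precisely the values of $E$ at the two endpoints and at the midpoint of the interval.

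Next I would invoke the sign information on $E'$ derived in Proposition~\ref{prop-minam:0}: for $\lambda>\tilde\lambda_0$ one has $E'(y)<0$ on $(z_i,z_{i+\frac12})$ and $E'(y)>0$ on $(z_{i+\frac12},z_{i+1})$. Hence $E$ decreases from $z_i$ down to its minimum at $z_{i+\frac12}$, and then increases up to $z_{i+1}$. Consequently $E(z_{i+\frac12})\leq E(z_i)$ and $E(z_{i+\frac12})\leq E(z_{i+1})$, which is exactly
$$
|u(z_{i+\frac12};\lambda,k)|^2\leq |u'(z_i;\lambda,k)|^2
\quad\text{and}\quad
|u(z_{i+\frac12};\lambda,k)|^2\leq |u'(z_{i+1};\lambda,k)|^2.
$$
Taking the minimum of the two right-hand sides yields the claimed inequality.

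There is essentially no hard step here: the entire content is bookkeeping on the monotonicity already proved, together with the vanishing of $u$ at consecutive zeros and the vanishing of $u'$ at the intermediate extremum. The only minor point to be careful about is that the argument was phrased for the convex case ($u<0$ on the interval), so I would remark that the concave case is handled identically (by replacing $u$ with $-u$, or by the symmetric sign analysis), and that the threshold $\tilde\lambda_0$ is the same one furnished by Proposition~\ref{prop-minam:0}.
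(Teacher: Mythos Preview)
Your proof is correct and is precisely the argument the paper intends: the corollary is stated without proof because it is immediate from the monotonicity of $E(y)=u^2+u'^2$ established in Proposition~\ref{prop-minam:0}, together with the evaluations $E(z_i)=|u'(z_i)|^2$, $E(z_{i+1})=|u'(z_{i+1})|^2$, and $E(z_{i+\frac12})=|u(z_{i+\frac12})|^2$. Your remark on handling the concave case by symmetry is also appropriate.
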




\subsection{\textbf{DIFFUSION COEFFICIENTS SATISFYING THE MINIMAL AMPLITUDE HYPOTHESIS}}$\empty$\\

 In this subsection we present several  subsets of the set $\mathscr{K}$ of diffusion coefficients $c(y)$ for which the minimal amplitude hypothesis can be verified.  In fact, our ultimate subset is that of functions of \textit{bounded total variation} (see Subsection ~\ref{subsecbTV}), that contains all the subsets considered here. To justify our special treatment of the more restricted subsets, we call attention to the following points.
  \begin{itemize}
  \item As we narrow down the admissible coefficients $c(y)$ (as we did above for $c(y)\in C^2$) we can extract more information on the general structure of the corresponding non-guided eigenfunctions.
      \item The methods of proof in the different cases are quite different from each other. Given the important role of the minimal amplitude in these investigations and the fact that the most general case (namely \textit{all} $c(y)\in\mathscr{K}$) \textit{is still open}, it seems to us worthwhile to expound the various methods.
      \item The special case of piecewise constant coefficients is in the focus of much of the physical literature, and some of the estimates obtained in this context will prove to be crucial in establishing the more general case.

  \end{itemize}

\subsubsection{\textbf{First case}}$\empty$\label{subsublip}

\begin{framed}\textbf{$c(y)$ LIPSCHITZ}\end{framed}

   The first subset to be considered in the following proposition is that of Lipschitz functions.

\begin{prop}
\label{prop-minam:1bis}
Let $c(y)\in \mathscr{K}$ be in $W^{1,\infty}(\lbrack 0, H\rbrack).$ Then it satisfies the minimal amplitude hypothesis  with respect to $\mathscr{A}^{c}_\eps.$
\end{prop}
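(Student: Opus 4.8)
The plan is to control a weighted energy of the solution whose logarithmic derivative is governed by $|c'|$ alone, uniformly in the spectral parameters. Writing $u=u(\ylambk)$ and $p(\ylambk)=\frac{\lambda}{\mu_k^2 c(y)}-1$, I would introduce
$$F(y)=\mu_k^2\,p(\ylambk)\,u^2+(u')^2.$$
Since $(\mu_k,\lambda)\in\mathscr{A}^c_\eps$ forces $p(\ylambk)\geq\eps/c_M>0$ (see \eqref{ineq-intermedzone:2}), $F$ is strictly positive: $F(y_0)=0$ would give $u(y_0)=u'(y_0)=0$, hence $u\equiv 0$ by uniqueness, contradicting normalization. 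Differentiating and using $u''=-\mu_k^2 p\,u$ from \eqref{eqnonguide}, the $u\,u'$ cross terms cancel and
$$F'(y)=\mu_k^2\,p'(\ylambk)\,u^2=-\frac{\lambda\,c'(y)}{c(y)^2}\,u^2.$$

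First I would extract a Gronwall-type bound on $F$. Dropping $(u')^2\geq 0$ in the denominator gives
$$\Big|\frac{F'(y)}{F(y)}\Big|\leq\frac{\lambda\,|c'(y)|}{c(y)^2\,\mu_k^2 p(\ylambk)}=\frac{\lambda\,|c'(y)|}{c(y)\,(\lambda-\mu_k^2 c(y))}.$$
Using $\lambda\geq(c_M+\eps)\mu_k^2$, one has $\lambda-\mu_k^2 c(y)\geq\lambda-\mu_k^2 c_M\geq\lambda\,\eps/(c_M+\eps)$ and $c(y)\geq c_m$, so the prefactor is bounded by $(c_M+\eps)/(c_m\eps)$, \emph{independently of} $(\mu_k,\lambda)$. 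Hence $|(\log F)'|\leq\frac{c_M+\eps}{c_m\eps}|c'(y)|$, and integrating over $[0,H]$ — exactly where the Lipschitz (indeed $W^{1,1}$) hypothesis enters, making $\int_0^H|c'|\,dy=TV(c)$ finite — yields a constant $M=\frac{c_M+\eps}{c_m\eps}\,TV(c)$ with
$$e^{-M}\leq\frac{F(y_1)}{F(y_2)}\leq e^{M},\quad\forall\,y_1,y_2\in[0,H],$$
uniformly over all $(\mu_k,\lambda)\in\mathscr{A}^c_\eps$.

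Next I would feed in the normalization to pin down the size of $F$. From $\int_0^H u^2 c^{-1}dy=1$ and $c\leq c_M$ we get $\int_0^H u^2\,dy\geq c_m$, so some $y^*$ has $u(y^*)^2\geq c_m/H$ and thus $F(y^*)\geq\mu_k^2 p(y^*;\lambda,k)\,c_m/H$. For $\lambda$ past the threshold $\tilde{\lambda}_0$ of Claim \ref{claim:1}/Proposition \ref{prop-minam:0} we have $\mu_k^2 p(\ylambk)>1$ everywhere, whence $u^2+(u')^2\geq F(y)/(\mu_k^2 p(\ylambk))$. Combining with the oscillation bound on $F$,
$$u(y)^2+u'(y)^2\geq e^{-M}\,\frac{c_m}{H}\,\frac{p(y^*;\lambda,k)}{p(\ylambk)}.$$
It then remains to bound $p(y^*;\lambda,k)/p(\ylambk)$ from below: writing $s=\lambda/\mu_k^2\geq c_M+\eps$ and $p=(s-c)/c$, this ratio equals $\frac{c(y)(s-c(y^*))}{c(y^*)(s-c(y))}$, which for all $s\geq c_M+\eps$ lies above $\frac{c_m}{c_M}\big(1+(c_M-c_m)/\eps\big)^{-1}=:\theta>0$. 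This gives $\mathfrak{r}_{c,\mu_k,\lambda}^2\geq e^{-M}\theta\,c_m/H$ uniformly for $\lambda>\tilde{\lambda}_0$. Finally, the finitely many pairs with $\lambda\leq\tilde{\lambda}_0$ each contribute a strictly positive $\inf_y(u^2+(u')^2)$ — a continuous function that never vanishes on the compact $[0,H]$ — so the overall infimum $\mathfrak{r}_c^2$ stays positive.

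The main obstacle is the choice of energy. The naive quantity $u^2+(u')^2$ has derivative proportional to $(1-\mu_k^2 p)u\,u'$, whose size blows up with the spectral parameter and yields no uniform control; the weighting by $\mu_k^2 p$ is precisely what cancels the cross term and leaves a logarithmic derivative controlled by $|c'|$ alone. Turning this into a uniform multiplicative bound, and likewise bounding $p(y^*)/p(y)$, both hinge on the spectral gap $\lambda\geq(c_M+\eps)\mu_k^2$, which keeps all the relevant factors bounded no matter how large $\lambda/\mu_k^2$ becomes.
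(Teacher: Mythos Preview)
Your proof is correct and follows essentially the same strategy as the paper: a Gronwall argument on a weighted energy whose logarithmic derivative is controlled by $|c'|$ uniformly in the spectral parameters, followed by the normalization to anchor the size of the energy. The only cosmetic difference is the choice of energy: the paper works with $u_1^2+u_2^2=u^2+(u')^2/\alpha^2$ (where $\alpha=\mu_k\sqrt{p}$), whereas you use $F=\alpha^2 u^2+(u')^2=\alpha^2(u_1^2+u_2^2)$; this shifts the $\alpha'/\alpha$ factor from the $u_2^2$ term to the $u_1^2$ term but leads to equivalent bounds, and your final constant involves $TV(c)=\int_0^H|c'|$ rather than the paper's $H\cdot\sup|c'|/c$, which is a slight sharpening.
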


\begin{proof}
Let $u(\ylambk)$ be a normalized solution to ~\eqref{eqnonguide}.
 We just need to prove the estimate ~\eqref{eqrcgreat0}. Equation \eqref{eqnonguide} can be rewritten as
 \begin{equation}
\label{eq-intermedzone:4}
u''(\ylambk) + \alpha^{2}(\ylambk)u(\ylambk) = 0
\end{equation}
where
 \begin{equation}
\label{eq-intermedzone:5}
\alpha(\ylambk) = \mu_{k}\sqrt{p(\ylambk)}.
\end{equation}
Observe that in light of ~\eqref{ineq-intermedzone:2}
 \begin{equation}
\label{ineq-intermedzone:10}
\sqrt{\frac{\eps}{c_M}}\mu_{k} \leq\alpha(\ylambk).
\end{equation}
\noindent
We now replace $u,\,u'$ by $u_1,\,u_2$ as follows (suggested in the  recent paper ~\cite{ABM:1}).
\begin{equation*}
u_{1}(\ylambk) = u(\ylambk), \quad u_{2}(\ylambk) = \frac{u'(\ylambk)}{\alpha(\ylambk)},
\end{equation*}
and note that the vector function $U(\ylambk)  = \left(\begin{array}{c}
u_{1}(\ylambk) \\
u_{2}(\ylambk)
\end{array}\right)$
 satisfies
\begin{equation}
\label{eq-intermedzone:6}
U'(\ylambk) = \left(\begin{array}{cc}
0 & \alpha(\ylambk)\\
-\alpha(\ylambk) & 0
\end{array}\right)U(\ylambk)  + \left(\begin{array}{cc}0&0\\0&-\frac {\alpha'(\ylambk)}{\alpha (\ylambk)}
 \end{array}\right)U(\ylambk).
\end{equation}
It readily follows that
\begin{equation}
\label{eq-intermedzone:6a}
\frac{d}{dy}(u_{1}^{2} + u_{2}^{2}) = - 2\frac {\alpha'(\ylambk)}{\alpha (\ylambk)} u_{2}^{2}.
\end{equation}

  Now
  $$\frac{\alpha'}{\alpha}=\frac{1}{2p}p',$$
  hence
  $$\frac{\alpha'}{\alpha}=
  -\frac12\,\frac{\mu_k^2c(y)}{\lambda-\mu_k^2c(y)}\,\frac{\lambda c'(y)}{\mu_k^2c(y)^2}
  =-\frac12\,\frac{\lambda}{\lambda-\mu_k^2c(y)}\,\frac{c'(y)}{c(y)}.$$
  Since $\mu_k^2c(y)\leq \mu_k^2c_M=\mu_k^2c_M\frac{c_M+\eps}{c_M+\eps}\leq\frac{c_M}{c_M+\eps}\lambda$ it follows that
  $$\frac{\lambda}{\lambda-\mu_k^2c(y)}\leq\frac{1}{1-\frac{c_M}{c_M+\eps}}=\frac{c_M+\eps}{\eps}.$$
  Finally
  \begin{equation*}\label{eqalpha'alpha}
  \Big|\frac{\alpha'}{\alpha}\Big|\leq \frac{c_M+\eps}{2\eps}\,\sup\limits_{y\in [0,H]}\,\frac{|c'(y)|}{c(y)},
  \end{equation*}
  that implies
  \begin{equation}
\label{ineq-intermedzone:11}
-C (u_{1}^{2} + u_{2}^{2})\leq \frac{d}{dy}(u_{1}^{2} + u_{2}^{2})\leq C (u_{1}^{2} + u_{2}^{2}),
\quad C=\frac{c_M+\eps}{\eps}\,\sup\limits_{y\in [0,H]}\,\frac{|c'(y)|}{c(y)}.
\end{equation}

 Since $u_{1}^{2} + u_{2}^{2}>0$ in the interval $\lbrack 0, H\rbrack$ we conclude that there exists a constant $R>0,$ so that for all $(\mu_k,\lambda)\in \mathscr{A}^{c}_\eps$ and  for any $y_{1},y_{2}\in \lbrack 0, H\rbrack$
  \begin{equation}
\label{ineq-intermedzone:12}
(u_{1}^{2} + u_{2}^{2})(y_{2})\leq R (u_{1}^{2} + u_{2}^{2})(y_{1}).
\end{equation}
Furthermore, since $u(\ylambk)$ is normalized $\int_{0}^{H}u(\ylambk)^{2}c(y)^{-1}dy = 1,$ it follows that there exists a constant $\eta>0,$ so that for all $(\mu_k,\lambda)\in \mathscr{A}^{c}_\eps,$

  \begin{equation}
\label{ineq-intermedzone:13}
(u_{1}^{2} + u_{2}^{2})(y)\geq \eta, \quad y\in \lbrack 0, H\rbrack.
\end{equation}
This estimate, combined with the definition of $u_{1}, u_{2}$ and \eqref{ineq-intermedzone:10} implies the required estimate \eqref{eqrcgreat0} and concludes the proof of the proposition.
\end{proof}
\noindent
\noindent
\subsubsection{\textbf{Second case}}$\empty$\label{subsubmonotone}

\begin{framed}\textbf{$c(y)$ MONOTONE NONDECREASING}\end{framed}

In the following proposition we relax the regularity of the coefficient $c(y).$ In fact it is no more required to be continuous but on the other hand a monotonicity assumption is imposed.

\begin{prop}
\label{prop-ng-minam-mnam-coefdiff-monotone:1}
   Let $c(y)\in\mathscr{K}$  and assume  that $c(y)$ is nondecreasing.
              Then it satisfies the minimal amplitude hypothesis  with respect to $\mathscr{A}^{c}_\eps.$
\end{prop}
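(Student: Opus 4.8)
The plan is to turn the monotonicity of $c$ into a monotonicity of a suitable energy along $[0,H]$, refining the computation \eqref{eq-intermedzone:6a} of the Lipschitz case: there the \emph{size} of $\alpha'/\alpha$ was controlled, whereas here only its \emph{sign} will matter. Write \eqref{eqnonguide} as in \eqref{eq-intermedzone:4}--\eqref{eq-intermedzone:5}, with $\alpha=\mu_k\sqrt{p}$ and $p(\ylambk)=\frac{\lambda}{\mu_k^2c(y)}-1\ge\frac{\eps}{c_M}>0$ by \eqref{ineq-intermedzone:2}. Since $u''=-\alpha^2u$ is bounded, $u\in C^1([0,H])$ and $u'$ is absolutely continuous; in particular $u,u'$ remain continuous across the discontinuities of $c$. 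I introduce the two energies
\[
E=u^2+\frac{u'^2}{\alpha^2},\qquad \widetilde E=\alpha^2u^2+u'^2=\mu_k^2\,p\,E.
\]

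The key step is the monotonicity of these energies on all of $[0,H]$. As $c$ is nondecreasing, $dc\ge0$ as a measure, hence $dp=-\frac{\lambda}{\mu_k^2c^2}\,dc\le0$. Regarding $E,\widetilde E$ as functions of bounded variation (legitimate since $u,u'$ are continuous, $u^2,u'^2$ absolutely continuous, and $p,1/\alpha^2$ are BV and bounded away from $0$), the same cancellation of cross terms as in \eqref{eq-intermedzone:6a} leaves
\[
d\widetilde E=\mu_k^2\,u^2\,dp\le0,\qquad dE=u'^2\,d\big(\tfrac{1}{\alpha^2}\big)=\frac{\lambda}{\mu_k^4\,p^2c^2}\,u'^2\,dc\ge0,
\]
where at a jump of $c$ the increments are evaluated with $u^2,u'^2$ continuous. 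Thus $\widetilde E$ is nonincreasing and $E$ is nondecreasing, jumps and any singular part of $c$ included; in particular $\inf_{y\in[0,H]}E(y)=E(0)$.

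I then convert the normalization into a uniform lower bound. From $u^2\le E\le E(H)$ and $\int_0^Hu^2c^{-1}\,dy=1$ one gets $E(H)\ge c_m/H$. Using $E=\widetilde E/(\mu_k^2p)$ together with the two monotonicities,
\[
\frac{E(H)}{E(0)}=\frac{\widetilde E(H)}{\widetilde E(0)}\cdot\frac{p(0)}{p(H)}\le\frac{p(0)}{p(H)}=\frac{c(H)}{c(0)}\cdot\frac{s-c(0)}{s-c(H)},\qquad s=\frac{\lambda}{\mu_k^2}\ge c_M+\eps,
\]
and since $c(0)\le c(H)$ the map $s\mapsto\frac{s-c(0)}{s-c(H)}$ is nonincreasing, maximal at $s=c_M+\eps$, whence $\frac{p(0)}{p(H)}\le\frac{c_M}{c_m}\cdot\frac{c_M+\eps-c_m}{\eps}=:R_0$, independent of $(\mu_k,\lambda)$. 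Hence $\inf_yE(y)=E(0)\ge E(H)/R_0\ge c_m/(HR_0)>0$. To pass from $E$ to the quantity $u^2+u'^2$ of \eqref{equation-definition-hypo-minam:1}, note that whenever $\alpha^2(y)\ge1$ for all $y$, equivalently $\lambda\ge c_M(\mu_k^2+1)$, we have $u^2+u'^2\ge E\ge c_m/(HR_0)$. The complementary pairs satisfy both $\lambda\ge(c_M+\eps)\mu_k^2$ and $\lambda<c_M(\mu_k^2+1)$, which force $\eps\mu_k^2<c_M$; there are therefore only finitely many such $(\mu_k,\lambda)\in\mathscr{A}^{c}_\eps$, and for each of them $u^2+u'^2$ is continuous and strictly positive on the compact interval $[0,H]$ (by uniqueness for \eqref{eqnonguide}), hence bounded below by a positive constant. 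Taking the minimum of $c_m/(HR_0)$ with these finitely many constants gives $\mathfrak{r}_c>0$, i.e. \eqref{eqrcgreat0}.

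The main obstacle, and the reason monotonicity is the natural hypothesis here, is precisely the possible discontinuity of $c$: the energy identities are pointwise only on the absolutely continuous part, so everything hinges on checking that $\widetilde E$ and $E$ move in the correct direction across jumps (and along any singular continuous part). The sign $dc\ge0$ provided by monotonicity is exactly what makes both measure-level identities one-signed, replacing the two-sided Gr\"onwall estimate of the Lipschitz case where instead $|\alpha'/\alpha|$ had to be integrable. I also note that $R_0$ depends only on $c_m,c_M,\eps$, so the lower bound is in fact uniform over all nondecreasing coefficients in $\mathscr{K}$ (cf. Remark~\ref{remuniformcy}).
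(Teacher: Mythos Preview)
Your proof is correct and shares its core with the paper's: both hinge on the monotonicity of the Pr\"ufer energy $E=u^2+u'^2/\alpha^2$ (the paper's $u_1^2+u_2^2$), established in the sense of measures so that jumps of $c$ are absorbed. The difference lies in how the \emph{ratio} $E(H)/E(0)$ is controlled. The paper invokes Proposition~\ref{prop-minam:0} to localize the minimum of $u^2+u'^2$ at extrema $z_{i+1/2}$, then performs a Riemann--Stieltjes integration by parts of $d(u'^2)=-\alpha^2\,d(u^2)$ between extrema to obtain $\alpha^2u^2\big|_{z_{j+1/2}}\le\alpha^2u^2\big|_{z_{i+1/2}}$ for $i<j$, and combines this with a separate bound on $\alpha(z_{i+1/2})^2/\alpha(z_{j+1/2})^2$. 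You bypass the zero structure entirely by introducing the companion energy $\widetilde E=\alpha^2u^2+u'^2=\mu_k^2pE$ and showing it is globally nonincreasing; the ratio bound $E(H)/E(0)\le p(0)/p(H)$ then drops out of the algebraic identity $E=\widetilde E/(\mu_k^2p)$. This is cleaner and yields directly the explicit uniform constant $R_0=\frac{c_M}{c_m}\cdot\frac{c_M+\eps-c_m}{\eps}$, recovering Corollary~\ref{corunifcmonot} without further work. One minor caveat: the chain-rule expressions you write for $dp$ and $d(1/\alpha^2)$ in terms of $dc$ are not literally valid across jumps of $c$, but you only use their \emph{signs}, which follow simply from the monotonicity of the compositions $y\mapsto p(y)$ and $y\mapsto 1/\alpha^2(y)$---so the argument stands.
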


\begin{proof}
Let $u(\ylambk)$ be a normalized solution to ~\eqref{eqnonguide} where $(\mu_k,\lambda)\in\mathscr{A}^{c}_\eps.$  Define $\mathfrak{r}_{c,\mu_{k},\lambda}$ as in ~\eqref{definition-minimal-amplitude:2} and use the transformation
 (as in the proof of Proposition \ref{prop-minam:1bis})
\begin{equation*}
\label{equation-ng-minam-coefdiff-monotone:1}
\begin{array}{ccc}
u_{1}(\ylambk) = u(\ylambk), & &u_{2}(\ylambk) = \frac{u'(\ylambk)}{\alpha(\ylambk)},\\
 p(\ylambk)= \frac{\lambda}{\mu^2_{k}c(y)}-1, & &\alpha(\ylambk) = \mu_{k}\sqrt{p(\ylambk)}.
\end{array}
\end{equation*}
 We first show that even though  $\alpha(\ylambk)$ is not necessarily continuous, Equations ~\eqref{eq-intermedzone:6}-\eqref{eq-intermedzone:6a} can be extended (in distribution sense) so that
          \be\label{equ1squ2sq-noncon}
            \frac{d}{dy}(u_1^2+u_2^2)=-\frac{(\alpha^2(\ylambk))'}{\alpha^2(\ylambk)}u_2^2.
          \ee
   Indeed,  we first have
           $$\frac{d}{dy}u_1^2=2u_1u_1'=2\alpha u_1u_2.$$
           Now let $\set{\alpha_m(\ylambk)}_{m=1}^\infty\subseteq C^\infty[0,H]$ be a uniformly bounded sequence converging to $\alpha(\ylambk)$ a.e. (hence in distribution sense)\footnote{Choose $0\leq\phi\in C^{\infty}_{0}(\mathbb{R}), \int\phi(y)dy=1$ and a continuation $\tilde{\alpha}(y)$ of $\alpha,$ nonincreasing on $(-1,H+1),$ with compact support on $\mathbb{R}$. We set $\alpha_{m}:= (\tilde{\alpha}\ast\phi_{m})_{\vert \lbrack 0,H\rbrack}$ with $\phi_{m}(y) = \frac{1}{m}\phi (\frac{y}{m})$. From $\alpha_{m}(y)= \int \tilde{\alpha}(y-z)\phi_{m}(z)dz$, we see that $\alpha_{m}$ is nonincreasing and $\vert\alpha(y) -\alpha_{m}(y)\vert\leq \sup_{\vert z\vert<\frac{1}{m}}\vert\alpha(y) -\alpha_{m}(y-z)\vert$ that proves the convergence a.e. on $\lbrack 0, H\rbrack$, hence in $L^{1}(0,H)$}.
           We may also assume that it is uniformly bounded away from zero. Under these conditions
          the sequence  $\frac{u'(\ylambk)^2}{\alpha_m^2(\ylambk)}$ (resp.
          $\frac{d}{dy}\frac{u'(\ylambk)^2}{\alpha_m^2(\ylambk)}$) converges (in distribution sense) to $u_2(\ylambk)^2$ (resp.$\frac{d}{dy}u_2(\ylambk)^2$). Since $u'\in H^1$ we have (using
           ~\eqref{eqnonguide})
           \begin{equation*}
           \frac{d}{dy}\Big[\frac{u'(\ylambk)^2}{\alpha_m^2(\ylambk)}\Big]
           =
           \frac{-2\alpha^2(\ylambk)\alpha_m^2(\ylambk)u'(\ylambk)u(\ylambk)
           -(\alpha_m^2(\ylambk))'u'(\ylambk)^2}{\alpha_m^4(\ylambk)}.
    \end{equation*}
           Clearly, the right-hand side in this equation converges, in the sense of distributions, to
             $$-2u'(\ylambk)u(\ylambk)-\frac{(\alpha^2(\ylambk))'u'(\ylambk)^2}{\alpha^4(\ylambk)}$$
             and substituting $u'(\ylambk)=\alpha(\ylambk)u_2(\ylambk)$ we obtain ~\eqref{equ1squ2sq-noncon}.
        Since $(\alpha^2(\ylambk))'\leq 0$  we conclude that
        $$ $$
\centerline{\bf $(\ast)\qquad$ The function $u_{1}^{2}(\ylambk) + u_{2}^{2}(\ylambk)$ is nondecreasing in $y\in \lbrack 0, H\rbrack.$}

$$ $$
As above, let
\begin{equation*}
\label{definition-ng-minam-coefdiff-monotone:1}
Z(\lambda,k) = \{z_{0} =0<z_{1}<z_{2}<\ldots,z_{ \mathfrak{s}} = H, u(z_{i};\lambda, k) = 0, \quad 0\leq  i\leq \mathfrak{s}\}\subseteq \lbrack 0, H\rbrack
\end{equation*}
be the set of zeros of  $u(\ylambk).$  Recall that the set $\set{
z_{i+\frac{1}{2}}}$ is defined as in ~\eqref{eq-intermedzone:2}.\\
 From Proposition ~\ref{prop-minam:0} and $(\ast)$ we deduce that there is $\tilde{\lambda}_0$ such that for any  $\lambda>\tilde{\lambda}_0,$
\begin{equation}
\label{equation-ng-minam-coefdiff-monotone:3}
u(z_{i+\frac{1}{2}};\ylambk)^{2}\leq u(z_{i+\frac{3}{2}};\ylambk)^{2}, \quad 0\leq i<\mathfrak{s}-1
\end{equation}
and
\begin{equation}
\label{equation-ng-minam-coefdiff-monotone:4}
\mathfrak{r}^{2}_{c,\mu_k, \lambda} = u(z_{\frac{1}{2}};\ylambk)^{2}.
\end{equation}
The proof will be complete if we prove
\begin{equation}
\label{equation-ng-minam-coefdiff-monotone:11}
 \tilde{\mathfrak{r_c}}=\inf_{\begin{array}{c} \lambda>\tilde{\lambda}_0,\\ (\mu_k,\lambda)\in\mathscr{A}^{c}_\eps\end{array}} \mathfrak{r}_{c,\mu_k,\lambda}>0.
\end{equation}
Indeed, there are only finitely many eigenfunctions with $(\mu_k,\lambda)\in\mathscr{A}^{c}_\eps$ and $\lambda\leq \tilde{\lambda}_0,$
  hence $$\mathfrak{r_c} =\min\set{\tilde{\mathfrak{r_c}},\min\limits_{ \lambda\leq\tilde{\lambda}_0,\, (\mu_k,\lambda)\in\mathscr{A}^{c}_\eps} \mathfrak{r}_{c,\mu_k,\lambda}}.$$

Note that  by excluding at most a finite number of eigenvalues we shall be able to obtain a more explicit lower bound for
the $\tilde{\mathfrak{r_c}}$ in ~\eqref{equation-ng-minam-coefdiff-monotone:11}. This will be evident in Corollary ~\ref{corunifcmonot}  below.

 It remains to prove  ~\eqref{equation-ng-minam-coefdiff-monotone:11}. This is done in two steps.
 \begin{itemize}
 \item \textbf{Controlling the nonincreasing function $\alpha(y;\lambda,k)^{2}.$}\\

For $i<j$ we have $\alpha(z_{j+\frac{1}{2}};\lambda,k)^{2} \leq \alpha(z_{i+\frac{1}{2}};\lambda,k)^{2}. $ On the other hand, by the monotonicity of $c(y)$ and
~\eqref{eq-intermedzone:5}-~\eqref{ineq-intermedzone:10} there exists a constant $\gamma>0,$ depending only on $\eps,\,c_m,\,c_M$ such that for $i<j$

\be\label{eqalphaialphaj}
1\leq\frac{\alpha(z_{i+\frac{1}{2}};\lambda,k)^{2}}
{\alpha(z_{j+\frac{1}{2}};\lambda,k)^{2}}\leq \gamma,\quad (\mu_k,\lambda)\in \mathscr{A}^{c}_\eps.
\ee

\item \textbf{Controlling the extremal values $u(z_{i+\frac{1}{2}};\lambda,k)^{2}$.}
\\
Integrate the equation
\begin{equation*}
\label{equation-ng-minam-coefdiff-monotone:6}
\frac{d}{dy}u'(y;\lambda ,k)^{2} = -\alpha(y;\lambda,k)^{2}\frac{d}{dy}u(y;\lambda ,k)^{2}, \quad \alpha(y;\lambda,k)^{2}= \mu_{k}^{2}p(y,\lambda,k),
\end{equation*}
 over the interval $\lbrack  z_{i+\frac{1}{2}},z_{j+\frac{1}{2}}\rbrack, i<j.$ Using Riemann-Stieljes integration by parts\footnote{Note that $u^{2}$ is continuous and $\alpha$ is a BV function. See \cite{Gordon:1}, Theorems 12.14 and 12.15.} we get
\begin{equation}
\label{equation-ng-minam-coefdiff-monotone:7}
\alpha(z_{j+\frac{1}{2}};\lambda,k) u(z_{j+\frac{1}{2}};\lambda,k)^{2} - \alpha(z_{i+\frac{1}{2}};\lambda,k) u(z_{i+\frac{1}{2}};\lambda,k)^{2} = \int_{z_{i+\frac{1}{2}}}^{z_{j+\frac{1}{2}}} u(\ylambk)^{2} d(\alpha(\ylambk)).
\end{equation}
Since $\alpha(\ylambk)^{2}$ is nonincreasing, the right-hand side is nonpositive and we conclude
\begin{equation}
\label{equation-ng-minam-coefdiff-monotone:8}
\alpha(z_{j+\frac{1}{2}};\lambda,k)^{2} u(z_{j+\frac{1}{2}};\lambda,k)^{2} \leq \alpha(z_{i+\frac{1}{2}};\lambda,k)^{2} u(z_{i+\frac{1}{2}};\lambda,k)^{2}.
\end{equation}

We have $u(z_{i+\frac{1}{2}};\lambda,k)^{2}\leq u(z_{j+\frac{1}{2}};\lambda,k)^{2}$  (see ~\eqref{equation-ng-minam-coefdiff-monotone:3}). From  ~\eqref{equation-ng-minam-coefdiff-monotone:8} and ~\eqref{eqalphaialphaj}
we infer that there exists a constant $\mathfrak{c}>0$  depending solely on $\eps,\,c_m,\,c_M$  such that
\begin{equation}
\label{equation-ng-minam-coefdiff-monotone:9}
u(z_{j+\frac{1}{2}};\lambda,k)^{2}\leq \mathfrak{c} u(z_{i+\frac{1}{2}};\lambda,k)^{2}, \quad 0\leq i<j \leq\mathfrak{s}-1,  \lambda>\tilde{\lambda}_0.
\end{equation}
\end{itemize}

Since $u(\ylambk)$ is normalized we have, in view of \eqref{eq-intermedzone:2}
\begin{equation}
\label{equation-ng-minam-coefdiff-monotone:10}
\sum_{i=0}^{\mathfrak{s}-1}u(z_{i+\frac{1}{2}};\lambda,k)^{2}(z_{i+1} - z_{i})\geq \int_{0}^{H} u(\ylambk)^{2} dy\geq c_{m}.
\end{equation}
Combining this estimate with  ~\eqref{equation-ng-minam-coefdiff-monotone:4},\,~\eqref{equation-ng-minam-coefdiff-monotone:9} we  obtain
\be\label{equnifrc}
H\mathfrak{c}^2\mathfrak{r}^{2}_{c,\mu_k, \lambda}\geq c_m,
\ee
and ~\eqref{equation-ng-minam-coefdiff-monotone:11} readily follows.
\end{proof}

In the course of the proof of Proposition ~\ref{prop-ng-minam-mnam-coefdiff-monotone:1} we have actually obtained interesting (and non-trivial) facts concerning the behavior of the normalized non-guided oscillatory solutions $u(\ylambk)$ for nondecreasing coefficients $c(y).$ They are highlighted in the  corollary below.

First, we fix $\eps>0,\,\,0<c_m<c_M,$ and define $\mathscr{K},\,\mathscr{A}^{c}_\eps$ as in ~\eqref{eq-def-K},\,~\eqref{eqAceps}, respectively.  Let $\mathscr{K}_1\subseteq\mathscr{K}$ be the set of all nondecreasing diffusion coefficients.

Observe that the threshold value $\tilde{\lambda}_0$ depends only on $\eps,\,c_m,\,c_M.$ Also the constant $\mathfrak{c}$ appearing in ~\eqref{equation-ng-minam-coefdiff-monotone:9} depends solely on these parameters.

\begin{cor}\label{corunifcmonot}
\begin{itemize} \item The minimal amplitude for all solutions with $\lambda>\tilde{\lambda}_0$ is strictly positive:
\be\label{eqinfrcK1}
\mathfrak{r}_{\mathscr{K}_1}:=\, \inf\limits_{c(y)\in \mathscr{K}_1}\,\,\inf_{\lambda>\tilde{\lambda}_0} \mathfrak{r}_{c,\mu_k,\lambda}
>0.
\ee
\item The amplitudes of any solution $u(\ylambk)$ between zeros are growing as $y$ moves from 0 to $H.$ However, the ratios of the amplitudes remain universally (for all $c(y)\in \mathscr{K}_1$)  bounded for $\lambda>\tilde{\lambda}_{0}$.
    \end{itemize}
\end{cor}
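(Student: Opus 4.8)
The plan is to observe that both assertions of Corollary~\ref{corunifcmonot} are direct consequences of inequalities already established inside the proof of Proposition~\ref{prop-ng-minam-mnam-coefdiff-monotone:1}; the only genuinely new content is the verification that the constants entering those inequalities are \emph{universal}, i.e.\ they depend on $\eps,\,c_m,\,c_M,\,H$ alone and not on the particular coefficient $c\in\mathscr{K}_1$ nor on the pair $(\mu_k,\lambda)\in\mathscr{A}^{c}_\eps.$ I would therefore first record, from the remark preceding the statement, that the threshold $\tilde{\lambda}_0$ and the constant $\mathfrak{c}$ of~\eqref{equation-ng-minam-coefdiff-monotone:9} depend only on $\eps,\,c_m,\,c_M,$ and that the ratio bound~\eqref{eqalphaialphaj} holds with a $\gamma$ of the same type. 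The latter is what makes everything work without an upper cut-off on $\lambda$: in the quotient of the two values of $\alpha^2$ the factor $\lambda$ cancels and $\mu_k^2/\lambda\le(c_M+\eps)^{-1},$ so that $\gamma=c_M(c_M+\eps)/(c_m\eps)$ is controlled by the extremal values of $c$ alone.

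For the first assertion I would simply rearrange~\eqref{equnifrc}. That inequality reads $H\mathfrak{c}^2\mathfrak{r}^2_{c,\mu_k,\lambda}\ge c_m$ and holds for every $c\in\mathscr{K}_1$ and every $(\mu_k,\lambda)\in\mathscr{A}^{c}_\eps$ with $\lambda>\tilde{\lambda}_0$; since $\mathfrak{c}$ is universal, this yields the uniform lower bound $\mathfrak{r}^2_{c,\mu_k,\lambda}\ge c_m/(H\mathfrak{c}^2).$ Taking the infimum over $c\in\mathscr{K}_1$ and over the admissible pairs then gives $\mathfrak{r}_{\mathscr{K}_1}^2\ge c_m/(H\mathfrak{c}^2)>0,$ which is exactly~\eqref{eqinfrcK1}.

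For the second assertion, the growth of the successive hump-amplitudes is precisely~\eqref{equation-ng-minam-coefdiff-monotone:3}, which gives $u(z_{i+\frac12};\lambda,k)^2\le u(z_{i+\frac32};\lambda,k)^2$ and hence, by iteration, $u(z_{i+\frac12};\lambda,k)^2\le u(z_{j+\frac12};\lambda,k)^2$ for every $i<j$ and every $\lambda>\tilde{\lambda}_0$; this says the ratio $u(z_{j+\frac12};\lambda,k)^2/u(z_{i+\frac12};\lambda,k)^2$ is at least $1.$ On the other hand~\eqref{equation-ng-minam-coefdiff-monotone:9} bounds the same ratio from above by $\mathfrak{c}.$ Combining the two, $1\le u(z_{j+\frac12};\lambda,k)^2/u(z_{i+\frac12};\lambda,k)^2\le \mathfrak{c}$ for all $0\le i<j\le\mathfrak{s}-1,$ and since $\mathfrak{c}$ depends only on $\eps,\,c_m,\,c_M$ this bound is uniform over $\mathscr{K}_1,$ which is the asserted universal boundedness of the amplitude ratios.

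As this outline shows, there is no substantial analytic obstacle remaining: the corollary merely repackages the monotone-case estimates. The one point that deserves care—and which is really the heart of the matter—is the claim of \emph{uniformity in $\mathscr{K}_1.$} I would make sure to trace each constant back to~\eqref{eqalphaialphaj}, whose bound depends solely on the extremal values $c_m,\,c_M$ (together with $\eps$) precisely because the monotonicity of $c$ forces $\alpha^2$ to be nonincreasing along the successive extrema while the spectral parameter $\lambda$ drops out of the relevant quotient. This decoupling from the individual profile $c$ and from the magnitude of $\lambda$ is what upgrades the pointwise estimates of the proposition into the family-wide statements~\eqref{eqinfrcK1} and the bounded-ratio conclusion.
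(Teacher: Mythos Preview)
Your proposal is correct and follows precisely the approach implicit in the paper: the corollary is stated there without a separate proof, merely noting (in the paragraph preceding it) that $\tilde{\lambda}_0$ and $\mathfrak{c}$ depend only on $\eps,\,c_m,\,c_M$, and you have accurately traced how~\eqref{equnifrc},~\eqref{equation-ng-minam-coefdiff-monotone:3} and~\eqref{equation-ng-minam-coefdiff-monotone:9} yield both assertions uniformly over $\mathscr{K}_1$. Your explicit computation showing that $\gamma$ in~\eqref{eqalphaialphaj} is controlled by the extremal values alone (because $\lambda$ cancels in the ratio of the $\alpha^2$'s) is exactly the point the paper leaves to the reader.
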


\subsubsection{\textbf{Third case}}\label{subsecpiececonst}
$$ $$
\begin{framed}\textbf{$c(y)$ PIECEWISE CONSTANT}\end{framed}

 We turn next to the case that $c(y)$ is a piecewise constant function. In  Theorem ~\ref{prop-minam:1-new} below we discuss our most general case,  namely  $c(y)$ of bounded variation. To this end, a detailed treatment of the piecewise constant case is needed.

     We shall use the following notation. There exist
 $0=h_{-1}<h_0<h_1<\ldots<h_N=H,$
           and positive constants $c_0,c_1,\ldots,c_N$ so that
           \be\label{eqhjcj-a}
            c(y)=c_{j+1},\,\,\,y\in(h_j,h_{j+1}),\,\,j=-1,0,\ldots,N-1.
           \ee
         We  show that $c(y)$ satisfies the minimal amplitude hypothesis, where the relevant constants depend only \textit{on its total variation}.

         \textbf{Notational comment:} In order to keep the notational uniformity with the other sections, we retain the notation $c_m,\,c_M$ for the minimal and maximal values, respectively, of $c(y).$ Of course they coincide with some $c_j's$ but the distinction in  various estimates  (such as ~\eqref{equnormbeta1}) will be completely clear.

        Recall that $u(\ylambk)$ satisfies Equation  ~\eqref{eqnonguide} with
                  $p(\ylambk)= \frac{\lambda}{\mu^2_{k}c(y)}-1,$ so that

                            \be\label{eqhjpj-new}
            p(\ylambk)=p_{j+1}=\frac{\lambda}{\mu^2_{k}c_{j+1}}-1,\,\,\,y\in(h_j,h_{j+1}),\,\,
            j=-1,0,\ldots,N-1.
           \ee

         \begin{prop}\label{propcyconst}  Assume  that
          $c(y)$ is piecewise constant as above, and let
            $$V=\suml_{j=0}^{N-1}|c_{j+1}-c_j|$$
            be the total variation of $c(y).$

          Let $u(\ylambk)$ be a normalized solution to ~\eqref{eqnonguide}
          where $(\mu_k,\,\lambda)\in \mathscr{A}^{c}_\eps.$

          Then
          \begin{enumerate}
          \item $c(y)$ satisfies the minimal amplitude hypothesis with respect to $\mathscr{A}^{c}_\eps.$

          \item For every $(a,b)\subseteq[0,H]$ there exist  constants $d>0,\,\,$
           $\lambda_0>0,\,$ depending only on $a,\,b,\,c_m,\,c_M,\,\eps, \,V,$ such that

         \be\label{equy2consta,b}
          \int_a^bu(\ylambk)^2c(y)^{-1}dy>d,\quad \lambda>\lambda_0.
                  \ee
         \textbf{Note in particular that $d$ and $\lambda_0$ }do not depend on the size $N$ of the partition.
         \end{enumerate}

         \end{prop}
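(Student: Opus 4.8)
The plan is to exploit the fact that on each layer $(h_{j},h_{j+1})$ where $c$ is constant, Equation \eqref{eqnonguide} reduces to $u''+\alpha_{j+1}^{2}u=0$ with constant frequency $\alpha_{j+1}^{2}=\mu_{k}^{2}p_{j+1}=\frac{\lambda}{c_{j+1}}-\mu_{k}^{2}$, so that the energy $E(\ylambk):=u(\ylambk)^{2}+u'(\ylambk)^{2}/\alpha(\ylambk)^{2}$ is exactly conserved inside each layer (this is \eqref{eq-intermedzone:6a} with $\alpha'=0$). All the change therefore takes place at the interfaces $h_{j}$, where $u$ and $u'$ are continuous but $\alpha$ jumps from $\alpha_{j}$ to $\alpha_{j+1}$. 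Writing $A=u(h_{j})^{2}\geq 0$ and $B=u'(h_{j})^{2}\geq 0$, the one-sided values are $E(h_{j}^{\pm})=A+B/\alpha^{2}$ with the respective frequency, and the ratio $E(h_{j}^{+})/E(h_{j}^{-})$ equals $1$ when $B=0$, equals $\alpha_{j}^{2}/\alpha_{j+1}^{2}$ when $A=0$, and is monotone in between; hence it lies in the interval with endpoints $1$ and $\alpha_{j}^{2}/\alpha_{j+1}^{2}$, giving
$$\Big|\log\frac{E(h_{j}^{+})}{E(h_{j}^{-})}\Big|\leq\Big|\log\frac{\alpha_{j+1}^{2}}{\alpha_{j}^{2}}\Big|.$$

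The crucial step, which I expect to carry the whole argument, is to bound the accumulated logarithmic fluctuation of $E$ by the total variation $V$, \emph{uniformly in the number $N$ of layers}. Setting $t=\mu_{k}^{2}/\lambda$, the membership $(\mu_{k},\lambda)\in\mathscr{A}^{c}_\eps$ forces $t\leq(c_{M}+\eps)^{-1}$, and since $\alpha_{j}^{2}=\lambda(1/c_{j}-t)$ the factor $\lambda$ cancels in the logarithm: $\log(\alpha_{j+1}^{2}/\alpha_{j}^{2})=\phi(c_{j+1})-\phi(c_{j})$ with $\phi(s)=\log(1/s-t)$. A direct computation gives $|\phi'(s)|=\big(s(1-ts)\big)^{-1}$, and using $s\geq c_{m}$ together with $1-ts\geq 1-c_{M}/(c_{M}+\eps)=\eps/(c_{M}+\eps)$ yields the uniform Lipschitz bound $|\phi'(s)|\leq L:=\frac{c_{M}+\eps}{c_{m}\eps}$, valid for all admissible $t$. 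Summing the interface estimates then produces
$$\sum_{j=0}^{N-1}\Big|\log\frac{E(h_{j}^{+})}{E(h_{j}^{-})}\Big|\leq L\sum_{j=0}^{N-1}|c_{j+1}-c_{j}|=LV,$$
so that $e^{-LV}\leq E(y_{2})/E(y_{1})\leq e^{LV}$ for all $y_{1},y_{2}\in[0,H]$, with a constant depending only on $c_{m},c_{M},\eps,V$ and not on $N$.

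From this comparability both assertions follow. For $\lambda>\tilde{\lambda}_{0}$, Proposition \ref{prop-minam:0} identifies $\mathfrak{r}_{c,\mu_{k},\lambda}^{2}$ with $\min_{i}u(z_{i+\frac{1}{2}};\lambda,k)^{2}$, and at each critical point $u'=0$, so $E(z_{i+\frac{1}{2}})=u(z_{i+\frac{1}{2}})^{2}$; the energy comparability therefore makes all extremal amplitudes mutually comparable with ratio in $[e^{-LV},e^{LV}]$. Since the maximum of $u^{2}$ on each interval $(z_{i},z_{i+1})$ is $u(z_{i+\frac{1}{2}})^{2}\leq e^{LV}\mathfrak{r}_{c,\mu_{k},\lambda}^{2}$ (by Claim \ref{claim:2}), combining $u^{2}\leq e^{LV}\mathfrak{r}_{c,\mu_{k},\lambda}^{2}$ on all of $[0,H]$ with the normalization $1=\int_{0}^{H}u^{2}c^{-1}dy\leq c_{m}^{-1}\int_{0}^{H}u^{2}dy$ gives $c_{m}\leq H e^{LV}\mathfrak{r}_{c,\mu_{k},\lambda}^{2}$, i.e.
$$\mathfrak{r}_{c,\mu_{k},\lambda}^{2}\geq\frac{c_{m}}{H}\,e^{-LV},\qquad\lambda>\tilde{\lambda}_{0}.$$

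This proves part (1): the finitely many eigenvalues with $\lambda\leq\tilde{\lambda}_{0}$ each give a fixed solution with $u^{2}+u'^{2}$ continuous and strictly positive on the compact $[0,H]$, so $\mathfrak{r}_{c}=\min\{\tilde{\mathfrak{r}}_{c},\ldots\}>0$ exactly as in the proof of Proposition \ref{prop-ng-minam-mnam-coefdiff-monotone:1}. Part (2) is then immediate from Theorem \ref{propcyirreg}, whose estimate \eqref{equyneq0} bounds $\int_{a}^{b}u(\ylambk)^{2}c(y)^{-1}dy$ below in terms of $\mathfrak{r}_{c}$, $b-a$, $\eps$, $c_{m}$, $c_{M}$; inserting the uniform lower bound $\frac{c_{m}}{H}e^{-LV}$ produces the constant $d>0$ (and $\lambda_{0}$ supplied by Claim \ref{claim:1}) depending only on $a,b,c_{m},c_{M},\eps,V$, and in particular not on the size $N$ of the partition.
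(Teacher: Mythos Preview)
Your proof is correct and follows essentially the same route as the paper's. The paper writes the solution explicitly as $u=\beta_j\sin(\mu_k\sqrt{p_{j+1}}(y-\gamma_j))$ on each layer and tracks the amplitudes $\beta_j^{2}$ across interfaces via the matching conditions, obtaining $\beta_j^{2}/\beta_{j+1}^{2},\,\beta_{j+1}^{2}/\beta_j^{2}\leq 1+\kappa|c_{j+2}-c_{j+1}|$ and telescoping to $e^{\pm\kappa V}$; your energy $E=u^{2}+u'^{2}/\alpha^{2}$ equals precisely $\beta_j^{2}$ on layer $I_j$, so the two computations are the same object in different notation, and your Lipschitz constant $L=(c_M+\eps)/(c_m\eps)$ matches the paper's $\kappa$.
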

         \begin{proof}
               In light of Theorem ~\ref{propcyirreg} we need first to prove the validity of the minimal amplitude hypothesis.

          Consider an interval $I_j=(h_j,h_{j+1}).$

         The solution $u(\ylambk)$ to ~\eqref{eqnonguide} in $I_j$ is given by
         \be\label{equyinIj-a}
         u(\ylambk)=\beta_j\sin(\mu\sqrt{p_{j+1}}(y-\gamma_j)),\quad y\in I_j
         \ee
         where $\beta_j,\,\gamma_j$ are suitable constants. Recall ~\eqref{ineq-intermedzone:2} that $|p_{j+1}|\geq\theta,\,j=-1,0,\ldots ,N-1,$
           where $\theta=\frac{\eps}{c_M}.$

        In the interval $I_j$ we have
        \be\label{equyinIjbeta}
        u(\ylambk)^2+u'(\ylambk)^2\geq\beta_j^2\min(1,\mu_k^2p_{j+1})\geq\beta_j^2\min(1,\mu_k^2\theta).
        \ee
         As $\mu_{k}\geq \mu_{1}>0$, to complete the proof we need to show the existence of a constant $\delta>0,$ depending only on $\eps,\,c_m,\,c_M,\,V$ so that
        \be\label{equyinIj-b}
        |\beta_j|\geq \delta,\quad  j=-1,0,\ldots,N-1.
        \ee
         We observe the following facts concerning the coefficients $\set{\beta_j}_{j=-1}^{N-1}.$
         \begin{itemize}
         \item $$\beta_j \neq 0, \quad j=-1,0,\ldots,N-1,$$
           since otherwise $u(\ylambk)\equiv 0.$
         \item There exists a constant $\kappa>1,$ depending only on
         $c_m,\,c_M,\,\eps, \,V,$  such that
            \be\label{eqsumbeta}\frac{\beta_j^2}{\beta_{j+1}^2},\frac{\beta_{j+1}^2}{\beta_j^2}
            \leq (1 +\kappa |c_{j+2}-c_{j+1}|),\quad j=-1,0,\ldots,N-2.\ee
                     \end{itemize}
   To establish ~\eqref{eqsumbeta} we proceed as follows.

       Denote, for $j=-1,0,\ldots,N-2$

         $$A_{j}=\mu_k\sqrt{p_{j+1}}(h_{j+1}-\gamma_j),\,\,\,
         B_{j+1}=\mu_k\sqrt{p_{j+2}}(h_{j+1}-\gamma_{j+1}).$$

         The continuity of $u(\ylambk)$ and $u'(\ylambk)$ at $h_{j+1}$ implies that
         \begin{equation*}\label{eqcontumhj}\aligned
         \beta_j\sin(A_{j})=
         \beta_{j+1}\sin(B_{j+1}),\\
         \beta_j\mu_k\sqrt{p_{j+1}}\cos(A_{j})=
         \beta_{j+1}\mu_k\sqrt{p_{j+2}}\cos(B_{j+1}).
         \endaligned
         \end{equation*}
         Recall that $\beta_j\neq 0$ for all $j.$
          It follows that for $ j=-1,0,\ldots,N-2$
          \be\label{eqbetaj2}\aligned
          \beta_j^2=\beta_{j+1}^2\Big(1+\big[\frac{p_{j+2}}{p_{j+1}}-1\big]\cos^2(B_{j+1})\Big),\\
          \beta_{j+1}^2=\beta_{j}^2\Big(1+\big[\frac{p_{j+1}}{p_{j+2}}-1\big]\cos^2(A_{j})\Big).
          \endaligned\ee
          From ~\eqref{eqhjpj-new} it readily follows that
           \begin{equation*}\label{eqpj+1pj}\aligned
           \frac{p_{j+2}}{p_{j+1}}-1=\frac{\lambda}{\lambda-\mu_k^2c_{j+1}}
           \cdot\frac{c_{j+1}-c_{j+2}}{c_{j+2}},\\\frac{p_{j+1}}{p_{j+2}}-1=\frac{\lambda}
           {\lambda-\mu^2_k c_{j+2}}
           \cdot\frac{c_{j+2}-c_{j+1}}{c_{j+1}},\quad j=-1,0,1,\ldots, N-2.
           \endaligned
           \end{equation*}

          The fact that $(\mu_k,\lambda)\in \mathscr{A}^{c}_\eps$ entails
        \be\label{eqestpj}  \Big|\frac{p_{j+1}}{p_{j+2}}-1\Big| , \Big|\frac{p_{j+2}}{p_{j+1}}-1\Big|\leq
        \kappa |c_{j+1}-c_{j+2}|, \,\,j=-1,0,\ldots,N-2,
                        \ee
                        where $\kappa>0$ depends only on $c_m,\,c_M,\,\eps.$
                        In conjunction with ~\eqref{eqbetaj2} the estimate ~\eqref{eqsumbeta} is established.

          From ~\eqref{eqbetaj2} we deduce for any $q\in\set{0,1,\ldots,N-1}$ upper and lower estimates
          $$\begin{cases}\beta_q^2\leq \beta_{-1}^2\prod\limits_{j=-1}^{q-1}(1+\kappa|c_{j+1}-c_{j}|)\leq \beta_{-1}^2e^{\kappa\suml\limits_{j=-1}^{q-1}|c_{j+1}-c_{j}|},\\
          \beta_q^2\geq\beta_{-1}^2\prod\limits_{j=-1}^{q-1}(1+\kappa|c_{j+1}-c_{j}|)^{-1}\geq \beta_{-1}^2e^{-\kappa\suml\limits_{j=-1}^{q-1}|c_{j+1}-c_{j}|}.
          \end{cases}$$
          Thus, for all $q\in\set{0,1,\ldots,N-1}$, the coefficients $\beta_{-1}$ and $\beta_q$ are comparable in the sense that
          \be\label{eqestbetal}\beta_{-1}^2e^{-\kappa V}\leq\beta_q^2\leq\beta_{-1}^2e^{\kappa V}.\ee

          The normalization of $u(\ylambk)$ in conjunction with ~\eqref{equyinIj-a}, \eqref{eqestbetal} implies
          \be\label{equnormbeta1}1=\int_0^Hu(\ylambk)^2c(y)^{-1}dy\leq c_m^{-1}\suml_{j=-1}^{N-1}\beta_j^2(h_{j+1}-h_j)\leq c_m^{-1}H\beta_{-1}^2e^{\kappa V}.\ee
          Thus finally the estimate ~\eqref{equyinIj-b} follows from ~\eqref{eqestbetal} and ~\eqref{equnormbeta1}.

          The estimates ~\eqref{equyinIjbeta} and ~\eqref{equyinIj-b} imply that the minimal amplitude hypothesis is satisfied
          \be\label{eqminampconst}
       \mathfrak{r}_c^2= \inf \set{u(\ylambk)^2+u'(\ylambk)^2,\,\,y\in[0,H]\,\,\,\,
      }
       \geq\delta^2\min(1,\mu_1^2\theta)>0,
        \ee
        and $\mathfrak{r_c}$ depends only on $\eps,\,c_m,\,c_M,\,V.$

        The non-concentration estimate ~\eqref{equy2consta,b} is now a consequence of the general Theorem ~\ref{propcyirreg}.
         \end{proof}

         In analogy with the case of the subset of all nondecreasing coefficients (Corollary ~\ref{corunifcmonot}) we deduce a similar result for all piecewise constant coefficients having a uniform bound of their total variations.
\begin{cor}\label{corunifcV} Let $\mathscr{K}_{PCV}\subseteq\mathscr{K}$ be the set of all piecewise constant diffusion coefficients, with total variation less than $V.$ Then
\begin{enumerate}
\item \be\label{eqinfrcKV}
\mathfrak{r}_{PCV}=\inf\limits_{c(y)\in \mathscr{K}_{PCV}}\mathfrak{r}_c >0.
\ee
\item The ratios of the amplitudes of any two waves (between consecutive zeros) are uniformly bounded, for all coefficients in $\mathscr{K}_{PCV}.$
\end{enumerate}
\end{cor}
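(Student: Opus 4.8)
The plan is to recognize that Corollary \ref{corunifcV} is, in essence, a uniform restatement of Proposition \ref{propcyconst}: the entire purpose of that proposition was to produce constants depending only on $\eps,\,c_m,\,c_M,\,V$ — and never on the number $N$ of jumps nor on the breakpoints $h_j$ — so it only remains to extract explicit uniform bounds from its proof. First I would revisit the chain of estimates leading to \eqref{eqestbetal}. The constant $\kappa$ in \eqref{eqestpj} arises solely from bounding $\frac{\lambda}{\lambda-\mu_k^2 c_{j+1}}\leq\frac{c_M+\eps}{\eps}$ via the defining constraint $(\mu_k,\lambda)\in\mathscr{A}^{c}_\eps$; hence $\kappa$ depends only on $c_m,\,c_M,\,\eps$, and the two-sided comparison $\beta_{-1}^2 e^{-\kappa V}\leq\beta_q^2\leq\beta_{-1}^2 e^{\kappa V}$ of \eqref{eqestbetal} holds with one and the same $\kappa$ for every $c\in\mathscr{K}_{PCV}$.

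For part (1), I would combine the normalization \eqref{equnormbeta1}, which gives $\beta_{-1}^2\geq \frac{c_m}{H}e^{-\kappa V}$, with the lower half of \eqref{eqestbetal} to obtain $\beta_q^2\geq \frac{c_m}{H}e^{-2\kappa V}=:\delta^2$ for all $q$ and all $c\in\mathscr{K}_{PCV}$. Since $\delta$ now depends only on $c_m,\,H,\,\kappa,\,V$, inserting it into \eqref{eqminampconst} yields $\mathfrak{r}_c^2\geq\delta^2\min(1,\mu_1^2\theta)$ with $\theta=\frac{\eps}{c_M}$, a bound valid for every coefficient in the family (no exclusion of low eigenvalues is needed here, since $\mu_k\geq\mu_1$ throughout). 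Taking the infimum over $\mathscr{K}_{PCV}$ then gives $\mathfrak{r}_{PCV}^2\geq\delta^2\min(1,\mu_1^2\theta)>0$, which is precisely \eqref{eqinfrcKV}.

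For part (2), I would note that at the interior extremum $z_{i+\frac{1}{2}}$ of a crest the condition $u'(z_{i+\frac{1}{2}};\lambda,k)=0$ forces the cosine in the representation \eqref{equyinIj-a} to vanish, so that $|u(z_{i+\frac{1}{2}};\lambda,k)|=|\beta_j|$ for the index $j$ with $z_{i+\frac{1}{2}}\in I_j$ (and in the degenerate case where $z_{i+\frac{1}{2}}$ falls on a breakpoint, continuity of $u'$ forces the adjacent $|\beta_j|$ to agree). Thus the amplitudes between consecutive zeros are exactly the numbers $|\beta_j|$. The comparison \eqref{eqestbetal} then gives $\beta_p^2/\beta_q^2\leq e^{2\kappa V}$ for any pair $p,q$, and since $\kappa$ is fixed by $c_m,\,c_M,\,\eps$ alone, this ratio bound is uniform across $\mathscr{K}_{PCV}$.

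The one point requiring care — more bookkeeping than genuine obstacle — is confirming that none of $\kappa,\,\delta,\,\theta$ conceals a dependence on $N$ or on the partition $\{h_j\}$: the total variation enters only through the factors $e^{\pm\kappa V}$, while $\kappa$ and $\theta$ are determined by $c_m,\,c_M,\,\eps$. Once this is verified, both assertions follow at once, with the explicit uniform constants recorded above.
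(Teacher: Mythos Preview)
Your proposal is correct and follows essentially the same approach as the paper, which simply refers to \eqref{eqminampconst} for part (1) and to \eqref{eqestbetal} for part (2). Your version is more explicit in tracking that $\kappa,\,\delta,\,\theta$ depend only on $c_m,\,c_M,\,\eps,\,V,\,H$ (and in identifying the crest amplitudes with the $|\beta_j|$), but the underlying argument is identical.
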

\begin{proof} The estimate ~\eqref{eqinfrcKV} follows from ~\eqref{eqminampconst}. The second item follows from ~\eqref{eqestbetal}.
\end{proof}

\subsection{\textbf{THE ULTIMATE CASE- $c(y)$ OF BOUNDED VARIATION}}\label{subsecbTV} $\empty$\\

   Our ultimate result concerns the case that $$c(y)\in \mathscr{K}_V=\set{c\in\mathscr{K},\,\,TV(c)\leq V}.$$
   Recall that $\mathscr{K}$ was defined in ~\eqref{eq-def-K}.

   We establish non concentration  for spectral pairs $(\mu_k,\lambda)\in\mathscr{A}^{c}_\eps$
   (see ~\eqref{eqAceps}).

   As in the cases studied above, the proof relies on the validity of the minimal amplitude hypothesis, via the fundamental  Theorem ~\ref{propcyirreg}.

 \begin{thm}
\label{prop-minam:1-new}
 Let  $c(y)$ be of bounded variation. Then it satisfies the  minimal amplitude hypothesis with respect to $\mathscr{A}^{c}_\eps,$ uniformly for all $\set{c(y)\in\mathscr{K},\,\,TV(c)\leq V}.$

   More precisely, as in  ~\eqref{equation-definition-hypo-minam:1},
  \begin{equation}
 \label{equation-definition-cybv}
 \mathfrak{r}_c^{2} = \inf_{\begin{array}{c} y\in \lbrack 0, H\rbrack,\\ (\mu_k,\lambda)\in\mathscr{A}^{c}_\eps\end{array}}\lbrack u_{\lambda,k}(y)^{2} + u'_{\lambda,k}(y)^{2}\rbrack >0,
 \end{equation}
 and
 \begin{equation}
 \label{equation-definition-cybv-unif}
\mathfrak{r}_V^{2}:=\inf\limits_{c\in\mathscr{K}_V} \mathfrak{r}_c^{2} >0.
 \end{equation}
    Furthermore, for any $(a,b)\subseteq (0,H)$ there exists a constant $\mathfrak{f}_{a,b}>0$  such that, for every $c(y)\in\mathscr{K}_V,$ and  for every normalized $u(\ylambk)$ associated with $(\mu_k,\lambda)\in \mathscr{A}^{c}_\eps,$
    \be\label{eqnonconcgeneral}
    \int_a^bu(\ylambk)^2c(y)^{-1}dy\geq \mathfrak{f}_{a,b}.
    \ee
    Finally, let $\omega=\omega'\times(a,b).$ If $\omega'\neq\Omega'$ assume that the family $\set{\phi_k(x')}_{k=1}^\infty$ of eigenfunctions of the Laplacian in $\Omega'\subseteq\RR^d$ does not concentrate in $\Omega'\setminus\omega'.$
             \\
             Then there exists $\mathfrak{f}_\omega>0$ such  the eigenfunction $v_{\lambda}(x',y)=u(\ylambk)\phi_k(x')$ satisfies
      \be\label{eqnoncontrateab-1}
      \mathfrak{f}_\omega\leq\frac{\Vert v_{\lambda}\Vert_{L^{2}(\omega)}}{\Vert v_{\lambda}\Vert_{L^{2}(\Omega)}}\leq 1
      \ee
      uniformly for all $c(y)\in\mathscr{K}_V$ and all eigenvalues in $ \mathscr{A}^{c}_\eps.$
 \end{thm}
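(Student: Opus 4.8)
The plan is to reduce the entire statement to the uniform minimal amplitude bounds \eqref{equation-definition-cybv}--\eqref{equation-definition-cybv-unif}. Once these are in hand, the layer estimate \eqref{eqnonconcgeneral} is exactly the conclusion of Theorem~\ref{propcyirreg} (whose only hypothesis is the minimal amplitude hypothesis, and whose constants $d,\lambda_0$ depend only on $\eps,c_m,c_M,b-a$ and on $\mathfrak r_c$, hence become uniform once $\mathfrak r_c\ge\mathfrak r_V$), while the eigenfunction estimate \eqref{eqnoncontrateab-1} follows by the separation-of-variables argument of Theorem~\ref{theo-C1-noguided}: writing $v_\lambda(x',y)=\phi_k(x')u(\ylambk)$, the mass ratio $\Vert v_\lambda\Vert_{L^2(\omega)}/\Vert v_\lambda\Vert_{L^2(\Omega)}$ factorises into the transverse ratio of $\set{\phi_k}$ in $\omega'$ (bounded below by the assumed non-concentration of $\set{\phi_k}$) and the longitudinal ratio $\big(\int_a^b u^2\,dy\big)/\big(\int_0^H u^2\,dy\big)$ (bounded below by \eqref{eqnonconcgeneral}), the weight $c^{-1}$ and Lebesgue measure being comparable since $c_m\le c\le c_M$. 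Thus the heart of the matter is to bound $u(\ylambk)^2+u'(\ylambk)^2$ from below by a constant depending only on $\eps,c_m,c_M,V,H,\mu_1$, for every $c\in\mathscr K_V$ and every $(\mu_k,\lambda)\in\mathscr A^c_\eps$.

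For this I would argue by approximation from the piecewise constant case. Set $\alpha(\ylambk)^2=\mu_k^2p(\ylambk)=\lambda/c(y)-\mu_k^2\ge\mu_k^2\eps/c_M>0$ and introduce the ``energy'' $g(y)=u(\ylambk)^2+u'(\ylambk)^2/\alpha(\ylambk)^2$. Approximate $c$ by piecewise constant $c_N\to c$ in $L^1$ with $c_m\le c_N\le c_M$ and $TV(c_N)\le V$, and let $u_N$ solve the $c_N$-version of \eqref{eqnonguide} with the \emph{same} pair $(\mu_k,\lambda)$ and initial data $u_N(0)=0,\ u_N'(0)=u'(0)$. On each constancy interval $I_j$ one has $u_N=\beta_j\sin(\alpha_N(y-\gamma_j))$, so that $g_N:=u_N^2+u_N'^2/\alpha_N^2\equiv\beta_j^2$ is \emph{piecewise constant}; the transmission estimate \eqref{eqestbetal} therefore gives $\sup g_N/\inf g_N\le e^{\kappa V}$ with $\kappa=\kappa(\eps,c_m,c_M)$ \emph{independent of $N$}. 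Continuous dependence of the solution of the linear system $U'=MU$ on its $L^1$ coefficient (Gronwall) yields $u_N\to u$, $u_N'\to u'$ uniformly and $\alpha_N^2\to\alpha^2$ a.e., hence $g_N\to g$ a.e.; passing to the limit preserves the ratio bound, i.e. $\mathrm{ess\,sup}\,g\le e^{\kappa V}\,\mathrm{ess\,inf}\,g$.

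It remains to convert this scale-invariant estimate into an absolute one by means of the normalisation $\int_0^Hu(\ylambk)^2c(y)^{-1}dy=1$. Since $c\ge c_m$ we have $1\le c_m^{-1}\int_0^H u^2\,dy$, so $\int_0^H u^2\,dy\ge c_m$; as $u^2\le g$ this forces $\int_0^H g\,dy\ge c_m$, hence $\mathrm{ess\,sup}\,g\ge c_m/H$ and $\mathrm{ess\,inf}\,g\ge e^{-\kappa V}c_m/H$. Finally, from $\alpha^2\ge\mu_1^2\eps/c_M$ one gets $u^2+u'^2\ge\min(1,\alpha^2)\,g\ge\min(1,\mu_1^2\eps/c_M)\,e^{-\kappa V}c_m/H$ a.e., and by continuity of $u^2+u'^2$ everywhere on $[0,H]$. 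The right-hand side depends only on $\eps,c_m,c_M,V,H,\mu_1$, which is precisely \eqref{equation-definition-cybv} together with its uniform form \eqref{equation-definition-cybv-unif}.

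The main obstacle is the passage to the limit: one must guarantee that the purely algebraic amplitude-ratio bound proved for piecewise constant data survives for a general coefficient of bounded variation. The delicate points are the construction of approximants $c_N$ respecting \emph{simultaneously} $c_m\le c_N\le c_M$ and $TV(c_N)\le V$ while converging in $L^1$, the continuous dependence of $(u_N,u_N')$ on the coefficient uniformly in $N$ so that $g_N\to g$, and the reconciliation of the pointwise ratio bound with $\mathrm{ess\,sup}/\mathrm{ess\,inf}$. A formally cleaner route is to differentiate $g$ in the Stieltjes sense and observe directly that $TV(\log g)\le TV(\log\alpha^2)\le\kappa\,TV(c)$; but justifying the BV product and chain rules when $dc$ may carry a singular-continuous part is exactly the technicality that the piecewise constant approximation is designed to bypass.
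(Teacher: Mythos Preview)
Your derivation of the uniform minimal amplitude bounds \eqref{equation-definition-cybv}--\eqref{equation-definition-cybv-unif} is correct and in fact cleaner than the paper's. The paper approximates not only the coefficient but also the \emph{eigenvalue and eigenfunction}: it constructs eigenvalues $\lambda^{(n)}\to\lambda$ of the approximating operators $A^{(n)}$ and proves $H^2$-convergence of the associated eigenfunctions via a separate perturbation lemma (Lemma~\ref{lem-eigenfunction:1-new}), obtained by tracking the zeros of $z\mapsto w(H;z)$ through Rouch\'e-type contour integrals. Your device of keeping the spectral pair $(\mu_k,\lambda)$ fixed and solving only the \emph{initial value problem} with approximate coefficient $c_N$ bypasses this lemma entirely; the point you implicitly exploit is that the transmission estimate \eqref{eqestbetal} (hence the ratio bound for $g_N$) holds for \emph{any} solution of the ODE, not just eigenfunctions, so Gronwall-type continuous dependence on the $L^1$ coefficient suffices. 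This is a genuine simplification.

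There is, however, a gap in your reduction of \eqref{eqnonconcgeneral} to Theorem~\ref{propcyirreg}. That theorem yields the layer estimate only for $\lambda>\lambda_0$, with $\lambda_0$ depending on $\eps,c_m,c_M,b-a$. For a \emph{fixed} $c$ the finitely many eigenvalues with $\lambda\le\lambda_0$ are harmless; but the statement demands a bound uniform over all $c\in\mathscr{K}_V$, and these low eigenvalues (and their eigenfunctions) vary with $c$. The paper closes this with a short compactness argument: assuming $\int_a^b u_n^2 c_n^{-1}dy\to 0$ along a sequence $c_n\in\mathscr{K}_V$, $\lambda_n\le\lambda_0$, one gets $\{u_n\}$ bounded in $H^2(0,H)$ (since the coefficients $\lambda_n/c_n-\mu_{k_n}^2$ are uniformly bounded), hence along a subsequence $u_n\to u_\infty$ in $C^1$, forcing $u_\infty\equiv 0$ on $(a,b)$ and thus $\int_a^b(u_n^2+u_n'^2)\to 0$, contradicting the already established uniform bound $u_n^2+u_n'^2\ge\mathfrak r_V^2$. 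You should add this paragraph; your minimal amplitude bound is precisely the ingredient that makes it work.
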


                The proof consists of approximating $c(y)$ by a sequence of piecewise constant functions and using the results of Proposition ~\ref{propcyconst} and Corollary ~\ref{corunifcV}. The approximation procedure is based on the following result  ~\cite[pp. 12-13]{Bres:1}.
   \begin{claim}
\label{claim:4-new}
Suppose that $c(y)\in\mathscr{K}$  and is of total variation $V>0.$
 Then there exists a sequence of piecewise constant functions $\set{c^{(n)}(y)}_{n=1}^\infty,$ so that

 \begin{itemize}
 \item \begin{equation}
 \label{constantBV:1-unif}
 \lim\limits_{n\to\infty} c^{(n)}(y)=c(y),\quad \mbox{uniformly in }\,\,y\in[0,H]. \ee
 \item \begin{equation}
 \label{constantBV:1-new}
 \sup \set{ TV(c^{(n)})}_{n=1}^\infty \leq V= TV(c).\ee
 \item \begin{equation}
 \label{constantBV:2-new}
 0<c_m=ess\,inf \,\,c(y)\leq c_m^{(n)}=ess\,inf,\, c^{(n)}(y)\leq c_M^{(n)}=ess\,sup \,\, c^{(n)}(y)\leq c_M=ess\,sup \,\,c(y),\quad n=1,2,\ldots\ee
 \end{itemize}

\end{claim}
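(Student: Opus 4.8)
The plan is to fix a convenient pointwise representative of $c$ and then to obtain each $c^{(n)}$ by \emph{sampling} that representative on a partition adapted to the discontinuities of $c$; if the partition is chosen so that $c$ has small oscillation on each of its open subintervals, the resulting step function will approximate $c$ uniformly while automatically inheriting the total-variation and range bounds. First I would recall that a function of bounded variation is regulated: writing $c=c_1-c_2$ as a difference of two bounded nondecreasing functions (Jordan decomposition), the one-sided limits $c(y^-),c(y^+)$ exist at every $y\in[0,H]$. I fix once and for all the precise representative $\bar c$ (the pointwise limit of averages $\frac{1}{2r}\int_{y-r}^{y+r}c$), which is regulated, satisfies $TV(\bar c)=TV(c)\le V$, and, being at each point a limit of averages of a function taking values in $[c_m,c_M]$ a.e., obeys $\bar c(y)\in[c_m,c_M]$ for every $y\in[0,H]$.

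Next I would construct, for each $n$, an adapted partition. The elementary tool is the standard characterization of regulated functions: for every $\eta>0$ there is a finite partition $0=y_0<y_1<\cdots<y_M=H$ such that the oscillation of $\bar c$ on each open subinterval $(y_i,y_{i+1})$ is at most $\eta$. This follows by covering $[0,H]$ by the finitely many intervals $(x-\delta_x,x+\delta_x)$ on which $\bar c$ stays within $\eta$ of $c(x^-)$ to the left and of $c(x^+)$ to the right, then extracting a finite subcover; it is precisely here that the finiteness of $TV$ enters, forcing only finitely many jumps of size $\ge\eta$ and hence a finite partition whose nodes isolate the large jumps. Applying this with $\eta=1/n$ yields a partition $P_n$, and I then set $c^{(n)}(y)=\bar c(\xi_i)$ for $y\in[y_i,y_{i+1})$, where $\xi_i\in(y_i,y_{i+1})$ is any interior sample point, so that $\xi_0<\xi_1<\cdots$ is strictly increasing.

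Finally I would verify the three required properties. For the range bound, $c^{(n)}$ takes only the values $\bar c(\xi_i)\in[c_m,c_M]$, so $c_m\le{\rm ess\,inf}\,c^{(n)}\le{\rm ess\,sup}\,c^{(n)}\le c_M$, which is \eqref{constantBV:2-new}. For the total variation, $c^{(n)}$ is a step function jumping only at the interior nodes, whence
$$TV(c^{(n)})=\sum_i|\bar c(\xi_{i+1})-\bar c(\xi_i)|\le TV(\bar c)=TV(c)\le V,$$
because $\xi_0<\xi_1<\cdots$ is a finite increasing sequence in $[0,H]$ and the variation over any such sequence is dominated by the total variation; taking the supremum over $n$ gives \eqref{constantBV:1-new}. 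For the uniform convergence \eqref{constantBV:1-unif}, every $y\in(y_i,y_{i+1})$ satisfies $|c^{(n)}(y)-\bar c(y)|=|\bar c(\xi_i)-\bar c(y)|\le\mathrm{osc}\big(\bar c,(y_i,y_{i+1})\big)\le 1/n$, so that $\|c^{(n)}-c\|_{L^\infty([0,H])}\le 1/n\to0$ (the finitely many nodes form a null set and do not affect the essential supremum).

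The only genuinely delicate point — the main obstacle — is reconciling \emph{uniform} approximation with the bound $TV(c^{(n)})\le V$ in the presence of jumps. A naive uniform grid would give the variation bound but fail uniform convergence at jump points not lying on the grid, whereas crudely inserting extra nodes to chase those jumps risks inflating the variation. Both difficulties dissolve at once if the nodes are placed by the regulated-function oscillation lemma and the step values are taken as \emph{honest samples} $\bar c(\xi_i)$ at increasing points, since then $TV(c^{(n)})$ is literally a sub-sum of the variation of $c$. The remaining care is only in checking that the chosen representative keeps all sampled values inside $[c_m,c_M]$, which the Jordan decomposition and the averaging definition of $\bar c$ guarantee.
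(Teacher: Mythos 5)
Your proof is correct. The paper itself gives no argument for this claim---it simply cites Bressan \cite[pp.~12--13]{Bres:1}---and your construction (choose a good pointwise representative, partition $[0,H]$ so that the oscillation on each open subinterval is at most $1/n$, and sample at increasing interior points $\xi_i$ so that $TV(c^{(n)})$ is a sub-sum of the pointwise variation of $\bar c$) is precisely the standard sampling proof found in that reference, including the correct handling of the measure-zero set of nodes, where convergence is naturally understood in the essential-supremum sense appropriate to an $L^\infty$ coefficient.
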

 Recall (see Introduction) that we denote
 $$\widetilde{c}(x',y)=c(y),\,\,\,\,\widetilde{c^{(n)}}(x',y)=c^{(n)}(y),\quad x=(x',y)\in \Omega=\Omega'\times[0,H],$$
 with associated operators
$$A=-\widetilde{c}\Delta,\,\,\quad A^{(n)}=-\widetilde{c^{(n)}}\Delta.$$
 For the Laplacian $-\Delta_{x'}$ acting in $L^{2}(\Omega')$ with domain $H^2(\Omega')\cap H^1_0(\Omega')$, we denote by $(\mu_k^2,\phi_k)_{k\geq 1}$ the sequence of normalized eigenfunctions and their associated eigenvalues, ordered by $\mu_k\leq \mu_{k+1}.$ \\
 The eigenfunctions of $A$ (resp. $A^{(n)}$) are
 $$v_\lambda(x)=u(y;\lambda,k)\phi_k(x'),\quad v_\lambda^{(n)}(x)=u^{(n)}(y;\lambda,k)\phi_k(x'),$$
 where $u(y;\lambda,k)$ (resp. $u^{(n)}(y;\lambda,k)$) satisfies Equation
         ~\eqref{eqnonguide} (resp. ~\eqref{eqnonguide} with $c,\,p$ replaced by $c^{(n)},\,p^{(n)}$) and is normalized in $L^{2}((0,H), c(y)^{-1}dy)$ (resp.
         $L^{2}((0,H), c^{(n)}(y)^{-1}dy)$).

         We consider eigenfunctions associated to spectral pairs  $(\mu_k,\lambda)\in\mathscr{A}^{c}_\eps.$

           The following perturbation lemma  is at the basis of the proof of the theorem. We postpone its proof to the end of this section, following the proof of the theorem. Note that in this lemma no assumption is needed concerning the total variations of the involved functions.

         \begin{lem}[ \textbf{Convergence of eigenvalues and eigenfunctions}]
\label{lem-eigenfunction:1-new}
Let $\set{c^{(n)}}_{n=1}^\infty\subseteq \mathscr{K}$ converge uniformly (in $[0,H]$) to $c(y).$ Let $A^{(n)}$ and $A$ be the corresponding operators. Let $\lambda>0$ be an eigenvalue of $A,$ with associated normalized eigenfunction $u(y;\lambda,k)\phi_k(x').$ Then there exist $N>0$ and a sequence of eigenvalues $\set{\lambda^{(n)}}_{n=N}^\infty$ of $\set{A^{(n)}}_{n=N}^\infty,$ with associated normalized eigenfunctions $\set{u^{(n)}(y;\lambda^{(n)},k)\phi_{k}(x')}_{n=N}^\infty$ such that
  \begin{equation}
  \label{eq-conv-eigen}
  \begin{array}{l}
  (i)\hspace{50pt} \,\lim_{n\to\infty}\lambda^{(n)}=\lambda,\\
   (ii)\hspace{48pt} \lim_{n\to\infty}u^{(n)}(\cdot;\lambda^{(n)},k)
   =u(\cdot;\lambda,k) \,\,\mbox{in}\,\,H^2(0,H).
   \end{array}
  \end{equation}
 \end{lem}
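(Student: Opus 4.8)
The plan is to push everything down to the one-dimensional reduced operators in the $y$ variable. Since the transversal factors $\phi_k(x')$ are common to $A$ and to every $A^{(n)}$ (they depend only on $\Omega'$, not on $c$), it suffices to prove the statement for the reduced operators $A_k^{(n)}$ and $A_k$ of \eqref{eqAkofy} in the same fixed $k$-sector. The key reformulation I would use is that the eigenvalue equation $A_k u=\lambda u$, i.e. \eqref{equation-introduction:1}, is equivalent to the generalized eigenvalue problem $Lu=\lambda\,\rho\,u$ with the \emph{fixed}, $c$-independent Dirichlet operator $L=-\tfrac{d^{2}}{dy^{2}}+\mu_k^{2}$ and the variable weight $\rho(y)=c(y)^{-1}$. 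Uniform convergence $c^{(n)}\to c$ together with $c_m\le c^{(n)},c\le c_M$ gives $\|\rho^{(n)}-\rho\|_\infty\le c_m^{-2}\|c^{(n)}-c\|_\infty\to 0$, so only the weight of the problem is being perturbed, while the stiff part $L$ is frozen.

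First I would prove the eigenvalue convergence (i). Write $\lambda=\beta_{k,\ell}$, the $\ell$-th eigenvalue of $A_k$, and set $\lambda^{(n)}:=\beta_{k,\ell}^{(n)}$, the $\ell$-th eigenvalue of $A_k^{(n)}$ (these exist for all $n$, so $N$ is immaterial for existence). In the Hilbert space $L^2((0,H),\rho\,dy)$ the min-max characterization reads $\lambda_\ell=\min_{\dim S=\ell}\max_{0\neq v\in S} q(v)/b(v)$ with $q(v)=\int_0^H(|v'|^2+\mu_k^2|v|^2)\,dy$ and $b(v)=\int_0^H \rho|v|^2\,dy$, and similarly with $b^{(n)}(v)=\int_0^H\rho^{(n)}|v|^2\,dy$ for the perturbed problem. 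Since $\rho\ge c_M^{-1}$, one has $\int|v|^2\le c_M\,b(v)$, whence $|b^{(n)}(v)-b(v)|\le\delta_n\,b(v)$ with $\delta_n:=c_M\|\rho^{(n)}-\rho\|_\infty\to0$, uniformly in $v\in H^1_0(0,H)$. Taking min-max gives $\lambda^{(n)}\in\big[(1+\delta_n)^{-1},(1-\delta_n)^{-1}\big]\lambda$, so $\lambda^{(n)}\to\lambda$. I would also record here that within the fixed $k$-sector the eigenvalues are \emph{simple} (Sturm-Liouville), a fact used crucially below.

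Next comes the eigenfunction convergence (ii). Let $u^{(n)}:=u^{(n)}(\cdot;\lambda^{(n)},k)$ be normalized in $L^2((0,H),\rho^{(n)}dy)$. From $q(u^{(n)})=\lambda^{(n)}$ and the boundedness of $\lambda^{(n)}$ the sequence is bounded in $H^1_0(0,H)$, so by Rellich a subsequence converges weakly in $H^1$ and strongly in $L^2$ (hence uniformly, by $H^1\hookrightarrow C[0,H]$) to some $w$. Passing to the limit in the weak form $\int(u^{(n)\prime}\varphi'+\mu_k^2 u^{(n)}\varphi)\,dy=\lambda^{(n)}\int\rho^{(n)}u^{(n)}\varphi\,dy$ — weak $H^1$ convergence on the left, and $\rho^{(n)}\to\rho$ uniformly with $u^{(n)}\to w$ strongly in $L^2$ and $\lambda^{(n)}\to\lambda$ on the right — yields $Lw=\lambda\rho w$ in $H^1_0$, i.e. $w$ is an eigenfunction of $A_k$ for $\lambda$. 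The normalization survives the limit, $\int\rho|w|^2=\lim\int\rho^{(n)}|u^{(n)}|^2=1$, so $w\neq0$; by simplicity $w=\pm u(\cdot;\lambda,k)$, and after fixing the sign of $u^{(n)}$ the limit is the same along every subsequence, so the full sequence converges. Finally I would rewrite the ODE as $u^{(n)\prime\prime}=\mu_k^2 u^{(n)}-\lambda^{(n)}\rho^{(n)}u^{(n)}$; the right-hand side converges in $L^2$ to $\mu_k^2 u-\lambda\rho u=u''$, which upgrades the convergence to $H^2(0,H)$ and gives (ii).

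The hard part will be the eigenfunction step, and specifically two delicate points: verifying that the weak $H^1$ limit $w$ is nonzero, and identifying it uniquely so that the whole sequence (with a fixed sign) converges rather than merely a subsequence. The first is settled by the uniform convergence of the weights, which lets the normalization pass cleanly to the limit; the second is exactly where simplicity of the eigenvalue in the fixed $k$-sector is indispensable. Once these are in hand, the passage from $L^2$/$H^1$ to $H^2$ convergence is immediate from the differential equation, and the eigenvalue convergence via min-max is routine.
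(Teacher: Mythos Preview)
Your argument is correct and takes a genuinely different route from the paper. You recast the reduced eigenvalue problem as $Lu=\lambda\rho u$ with a \emph{fixed} operator $L=-\tfrac{d^2}{dy^2}+\mu_k^2$ and only the weight $\rho=c^{-1}$ varying; eigenvalue stability then drops out of the min--max principle by the two-sided bound $(1\pm\delta_n)^{\mp1}$ on the Rayleigh quotients, and eigenfunction stability follows from weak $H^1$ compactness, passage to the limit in the weak form, identification via Sturm--Liouville simplicity, and an upgrade to $H^2$ through the equation. The paper instead fixes the initial data $w(0;z)=0,\ w'(0;z)=u'(0;\lambda,k)$, views the eigenvalues of $A_k$ as zeros of the \emph{analytic} shooting function $f(z)=w(H;z)$, and tracks these zeros under the perturbation $c\mapsto c^{(n)}$ by the argument principle (the integrals $\tfrac{1}{2\pi i}\oint f'/f$ and $\tfrac{1}{2\pi i}\oint z\,f'/f$), together with Rellich compactness for the family $w^{(n)}(\cdot;z)$. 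Your variational approach is more in the spirit of abstract spectral perturbation and has the pleasant feature of preserving the ordinal index $\ell$ (you take $\lambda^{(n)}=\beta_{k,\ell}^{(n)}$ directly), whereas the paper's shooting/argument-principle method is more constructive and would extend to settings without a convenient min--max structure. One small point: your sign-fixing can be made explicit by choosing each $u^{(n)}$ so that $\int_0^H u^{(n)}u\,dy\ge 0$, which forces every subsequential limit to equal $u$ rather than $-u$.
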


\begin{proof}[\textbf{Proof of Theorem ~\ref{prop-minam:1-new}}] Pick some $\lambda \geq (c_M+\eps)\mu_k^2.$

Let $\set{u^{(n)}(y;\lambda^{(n)},k)}_{n=N}^\infty$ be a sequence as in Lemma ~\ref{lem-eigenfunction:1-new}. Note that the convergence ~\eqref{eq-conv-eigen}(i) implies that, for sufficiently large index $n$ the condition $\lambda^{(n)}>(c_M+\frac{\eps}{2})\mu_{k_j}^2$ holds. In view of the uniform bound ~\eqref{constantBV:1-new} on total variations we can invoke Corollary ~\ref{corunifcV} to get
\be\label{equnifrnj}
(\mathfrak{r}_c^{app})^2:= \inf_{N\leq n<\infty}\inf_{y\in [0,H]}\Big(u^{(n)}(y;\lambda^{(n)},k)^2+u^{(n)'}(y;\lambda^{(n)},k)^2\Big)>L^2,
\ee
where $L>0$ depends only on $\eps,\,V,\,c_m,\,c_M$ (see ~\eqref{eqminampconst}).

The $H^2$ convergence ~\eqref{eq-conv-eigen}(ii) entails uniform convergence of both the functions and their derivatives. Hence
\be\label{eqinfuymuk}u(y;\lambda,k)^2+u'(y;\lambda,k)^2\geq L^2.
\ee
The estimate ~\eqref{equation-definition-cybv} now follows from the fact that, in view of Corollary ~\ref{corunifcV}, the estimate ~\eqref{equnifrnj} holds uniformly for all approximating sequences for any solution $u(y;\lambda,k)$ associated with $(\mu_k,\lambda)\in \mathscr{A}^{c}_\eps.$ In fact, we get the uniform estimate  ~\eqref{equation-definition-cybv-unif} since $\mathfrak{r}_c$ depends only on $V.$

Finally, we turn to the non-concentration statement ~\eqref{eqnonconcgeneral}. The general Theorem ~\ref{propcyirreg} ensures the existence of $d>0,\,\lambda_0>0$ depending on $\eps,\,c_M,\,c_m,\, b-a,\,\mathfrak{r_c},$          such that

              $$  \int_{a}^{b}\, u(y;\lambda,k)^2c(y)^{-1}dy\geq d,\quad
                 \lambda>\lambda_0.$$
                 Due to ~\eqref{equation-definition-cybv-unif} this estimate is uniformly valid (with the same $d,\lambda_0$) for all $c(y)\in\mathscr{K}$ with $TV(c)\leq V.$

However, for every $c(y)\in\mathscr{K}$ with $TV(c)\leq V$ there are finitely many eigenfunctions that are excluded, namely, those with $\lambda<\lambda_0.$ Clearly, these eigenfunctions vary with $c(y).$ We now show that they can be included in ~\eqref{eqnonconcgeneral}. The price to be paid is that the lower bound $\mathfrak{f}$ depends in a more delicate way on the various parameters (and not only on $\eps,\,c_M,\,c_m,\, b-a,\,\mathfrak{r_c}$).

To obtain a contradiction let $\set{c_n(y),\,\,TV(c_n)\leq V}_{n=1}^\infty\subseteq\mathscr{K}.$
Let $\set{\lambda_n}_{n=1}^\infty$ be a sequence of eigenvalues with associated normalized eigenfunctions $\set{u_n(y;\lambda_n,k_n)}_{n=1}^\infty$ satisfying
  \be\label{eqnonguide-n1}
        u_n''(y;\lambda_n,k_n)+\Big(\frac{\lambda_n}{c_n(y)}-\mu^2_{k_n}\Big)u_n(y;\lambda_n,k_n)=0.
\ee
  Assume further that
  $$(c_M+\eps)\mu^2_{k_n}<\lambda_n<\lambda_0,\quad n=1,2,\ldots$$
  Suppose that for some interval $(0,H)$ and some subsequence (we do not change indices)
  \be\label{equntozero}
  \lim\limits_{n\to\infty} \int_a^bu_n(y;\lambda_n,k_n)^2c(y)^{-1}dy=0.
  \ee
  The sequence is normalized and clearly the coefficients $\set{\frac{\lambda_n}{c_n(y)}-\mu^2_{k_n},\,\,n=1,2,\ldots}$ are uniformly bounded, hence $\set{u_n}_{n=1}^\infty$ is uniformly bounded in the Sobolev space $H^2(0,H).$ From the Sobolev embedding theorem and ~\eqref{equntozero} we infer
  \be\label{equntozerou'}
  \lim\limits_{n\to\infty} \int_a^b\Big(u_n(y;\lambda_n,k_n)^2+u_n'(y;\lambda_n,k_n)^2\Big)c(y)^{-1}dy=0.
  \ee
  This is a contradiction to the fact (see ~\eqref{equation-definition-cybv-unif})
  $$u_n(y;\lambda_n,k_n)^2+u_n'(y;\lambda_n,k_n)^2\geq \mathfrak{r}_V^{2},\quad n=1,2,\ldots$$

\end{proof}
\begin{proof}[\textbf{Proof of Lemma ~\ref{lem-eigenfunction:1-new}}]
We use the direct sum representation ~\eqref{equation-spectralstructure:1} both for the operator $A$ and the operators $A^{(n)}.$ Since the eigenfunctions $\set{\phi_k(x')}$ do not depend on the index $n,$ the reduced operators  $A_k^{(n)}$ (see ~\eqref{eqAkofy}) are given by
$$A_{k}^{(n)}=c^{(n)}(y)\big( \mu_{k}^{2} - \frac{d^{2}}{dy^{2}}\big), k = 1,2,\ldots,\mbox{ with }D(A_k^{(n)})=H^2(0,H)\cap H^1_0(0,H).
$$
Fix $k\in \mathbb{N}^*$ so that $\lambda$ is an eigenvalue of $A_k.$ The corresponding (reduced) eigenfunction $u(\ylambk)$ satisfies the equation (see ~\eqref{eqnonguide})
$$
 u''(\ylambk) + (\frac{\lambda}{c(y)}-\mu^{2}_{k})u(\ylambk)= 0, \quad u(\ylambk)(0) = u(\ylambk)(H) = 0.
$$
Let $B(\lambda,\delta)\subseteq  \mathbb{C}$ be the disk of radius $\delta$ centered at $\lambda$
   and consider the following linear initial value problem, with a complex parameter $z\in {\overline{B(\lambda,\delta)}},$
%
 \be\label{eqwinitz-a}
  \begin{cases}w''(y;z) + (\frac{z}{c(y)}-\mu^{2}_{k})w(y;z)= 0,\\
   w(0;z) = 0,\,\,w'(0;z)=u'(0;\lambda,k).
   \end{cases}
  \ee

      For every $y\in [0,H]$ the function $w(y;z)$ is analytic as a function of $z$  ~\cite[Chapter 1, Th.8.4]{coddington} and this is true in particular for $f(z):=w(H;z).$ Note that $z$ is an eigenvalue of $A$ if and only if $f(z)=0,$
     since  if $w$ is an eigenfunction then so is $aw,$ for any $a\neq 0.$ Clearly $f(\lambda)=0.$ This is the only zero of $f$ in  $B(\lambda,\delta)$ for sufficiently small $\delta>0,$ since $\lambda$ is an isolated eigenvalue of
      $A_k.$

      By  standard formulas for zeros of analytic functions, since $\lambda$ is a simple zero,
      \be\label{eqf'flambda}
       1=\frac{1}{2\pi i}\int\limits_{|z-\lambda|=r}\frac{f'(z)}{f(z)}dz,\quad \lambda=\frac{1}{2\pi i}\int\limits_{|z-\lambda|=r}z\frac{f'(z)}{f(z)}dz.
      \ee

      Replacing in ~\eqref{eqwinitz-a} $c(y)$ by  $c^{(n)}(y)$ we obtain solutions $w^{(n)}(y;z).$ From the equation and the initial condition we infer that $\set{w^{(n)}(y;z),\,\,z\in {\overline{B(\lambda,\delta)}} }_{n=1}^\infty$ is uniformly bounded in the Sobolev space $H^2(0,H).$ Fix $z\in B(\lambda,\delta)$ and  let $\set{z^{(n)}}\subseteq B(\lambda,\delta)$ be a sequence converging to $z.$ The Rellich compactness theorem yields the existence of a subsequence $\set{w^{(n_j)}}_{j=1}^\infty$ and a limit function $\tilde{w}(y;z)$ such that
      \be\label{eqconvwnj}
      \lim\limits_{j\to\infty}w^{(n_j)}(y;z^{(n_j)})=\tilde{w}(y;z),\quad \lim\limits_{j\to\infty}w^{(n_j)'}(y;z^{(n_j)})=\tilde{w}'(y;z),\,\,\mbox{strongly in}\,\,L^2(0,H),
      \ee
        and $w^{(n_j)''}(y;z^{(n_j)})$ converges strongly to $\tilde{w}''(y;z)$ due to the equation itself. It follows that $\tilde{w}(y;z)$ satisfies   ~\eqref{eqwinitz-a} with the same initial data, so by uniqueness
        $\tilde{w}(y;z)=w(y;z).$ In particular, since  all converging  subsequences have the same limit,~\eqref{eqconvwnj} can be replaced by
         \be\label{eqconvwnn}
      \lim\limits_{n\to\infty}w^{(n)}(y;z^{(n)})=w(y;z),\quad \lim\limits_{n\to\infty}w^{(n)'}(y;z^{(n)})=w'(y;z),\,\,\mbox{strongly in}\,\,L^2(0,H),
      \ee

         Setting $f^{(n)}(z)=w^{(n)}(H;z)$ we obtain from ~\eqref{eqf'flambda}, in view of the convergence ~\eqref{eqconvwnn} and for sufficiently large $n,$
          \be\label{eqfn'fnlambda}
       1=\frac{1}{2\pi i}\int\limits_{|z-\lambda|=r}\frac{{f^{(n)}}'(z)}{f^{(n)}(z)}dz,\quad \lambda^{(n)}=\frac{1}{2\pi i}\int\limits_{|z-\lambda|=r}z\frac{{f^{(n)}}'(z)}{f^{(n)}(z)}dz,
      \ee
          where
          the real sequence $\set{\lambda^{(n)}}$ satisfies $\lim\limits_{n\to\infty}\lambda^{(n)}=\lambda.$

          In addition, $\lambda^{(n)}$ (for sufficiently large $n$) is an eigenvalue of $A_k^{(n)}$ and $w^{(n)}(y;\lambda^{(n)})$ is an associated (not necessarily normalized) eigenfunction.
          To conclude the proof of the lemma we take $$u^{(n)}(y;\lambda^{(n)},k)=
          \frac{w^{(n)}(y;\lambda^{(n)})}{\Big[\int_0^H|w^{(n)}(y;\lambda^{(n)})|^2(c^{(n)}(y))^{-1}dy\Big]^\frac12}.$$

\end{proof}


\section{\textbf{THE DIFFUSION COEFFICIENT IS PIECEWISE CONSTANT--DETAILED STUDY}}
\label{section-piecewiseconstant:1}
 There is special physical interest in the case that the diffusion coefficient is piecewise constant.  For this reason, we focus here on this case, providing detailed information for both guided and non guided waves.  Of course, in this case $c(y)$ is of bounded variation, hence the results of Theorem ~\ref{theo-general-c-:1} and Theorem ~\ref{prop-minam:1-new} are applicable. However, we get here more detailed estimates by using more direct methods.

       \textbf{Notational comment:} As in Subsection ~\ref{subsecpiececonst}, in order to keep the notational uniformity with the other sections, we retain the notation $c_m,\,c_M$ for the minimal and maximal values, respectively, of $c(y).$ Of course they coincide with some $c_j's$ but the distinction in  various estimates   will be completely clear.

   This particular case is related to optical fibers for their industrial applications in both acoustics and optics and to printed circuit boards. The simplest example of an optical fiber is the step-index fiber: the fiber has two cylindrical parts sharing the same axis: a core of radius $a$ surrounded by a ring of thickness $b$, the cladding.  A buffer and a jacket protect these two elements by surrounding them. The index of the core is $n= n_1>0$ and that of the cladding are $n=n_2<n_1.$  Our coefficient of diffusion $c$ is exactly $c=\frac{1}{n}$. According to the choices of the respective constants $a,b, n_1,n_2$ and of the used pulse, the fiber is a single-mode fiber or a multiple-mode fiber.
   The non-specialist reader interested in these modes of data transport could consult many sites\footnote{\; for example
https://en.wikipedia.org/wiki/Optical$_{-}$fiber, http://www.fiberstore.com/Single-Mode-VS.-Multimode-Fiber-Cable-aid-340.html,
https://en.wikipedia.org/wiki/Printed$_{-}$circuit$_{-}$board}.

The cross-section of a fiber is a disk that we match to our open set $\Omega:=(0,L)\times(0,H)$ which are therefore not diffeomorphic. The analogy between the step-index fiber and this subsection is achieved by taking $N=1, c_{0}<c_{1}$ in the notations that follow. A pulse being fixed, a link between our work and the properties of the optical fibers is the following one: in the both cases we reduce the question to  a one-dimensional problem by separation of variables, replacing our variable $y$  by $r,$ the distance to the center of the disk. For the fiber the problem reduces to a family of Bessel equations according to modes E, H, ...  and the chosen simplifications whereas it is (2.2) for us. This correspondence
  has certainly theoretical limits: for the Dirichlet Laplacian in a disk there are eigenfunctions associated to high eigenvalues that are concentrated close to the boundary of the disk (see \cite[Section 7.7]{grebenkov-1} , \cite{grebenkov}). As a matter of fact, the frequencies not going to infinity in applications, this influence is reduced.
\subsection{GUIDED WAVES FOR MONOTONE PIECEWISE CONSTANT $c(y)$}
\label{subsec-guided-monotone}
$\empty$\\
This self contained subsection is a special case of Section \ref{section-guidedwaves:1}  by assuming $c$ is piecewise constant and monotone increasing when $0<y<H,$ which is the common structural assumption in physical applications, in particular in studies of optical fibers. With the above-mentioned precautions we are therefore dealing with the analogous case of the graduated-index fibers when the index $n$ is piecewise constant
and we prove the existence of these specific modes that are evanescent in the cladding. In our problem we find again the same properties of concentration of energy in the first layers of $\Omega.$ This concentration is increasing when their  thickness  decreases.

We begin by listing the hypotheses in this part (see Figure \ref{image4}).\\
\vskip-1cm
\begin{minipage}[t]{170mm}
\begin{wrapfigure}{r}{7.8cm}
\vskip-1.35cm
\hspace{-1cm}\includegraphics[scale=.36]{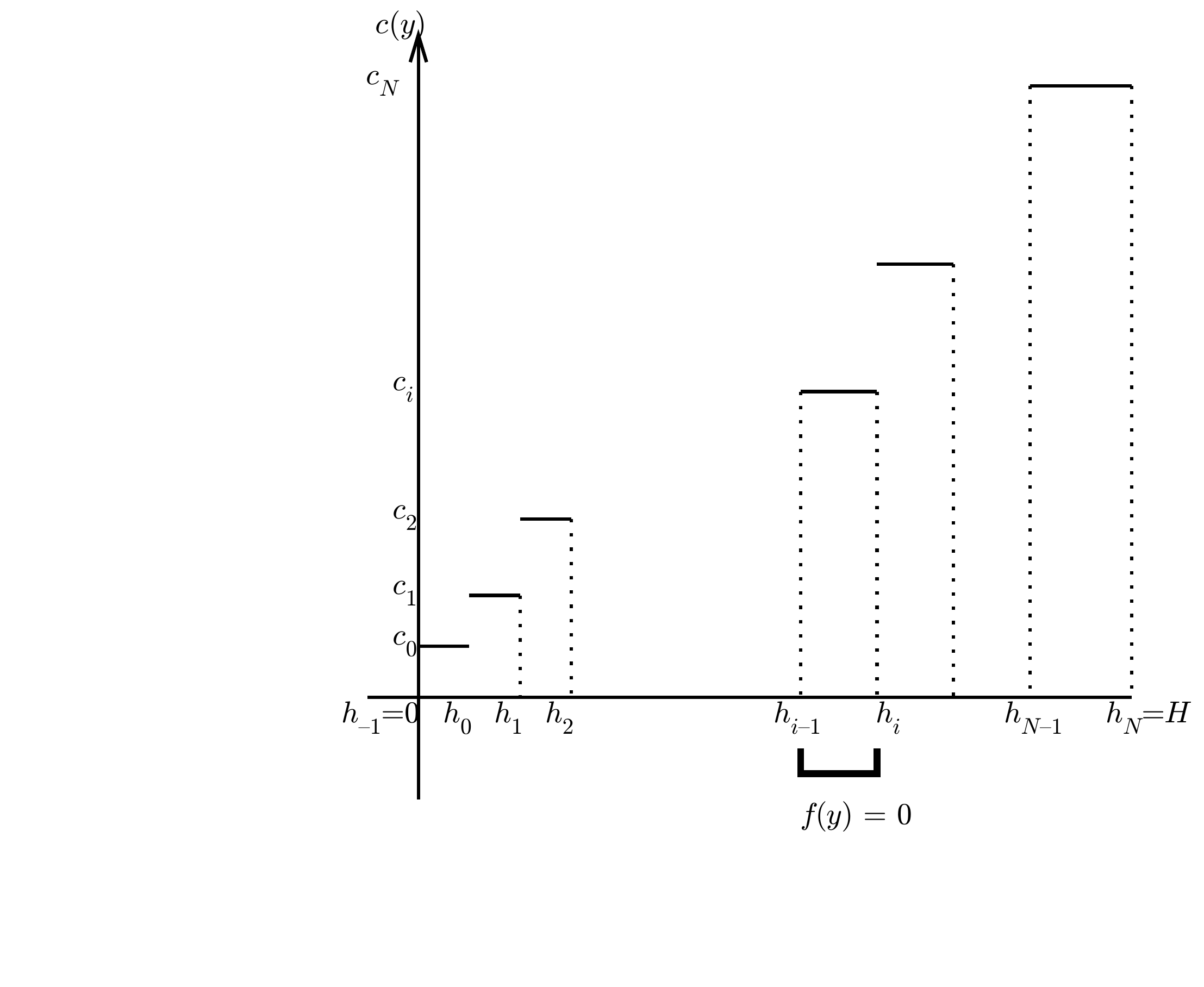}\vskip-1.4cm\caption{\label{image4}}\hfill
\vskip-.5cm
\end{wrapfigure}
\vskip.8cm
\noindent
Let $N\!\geq\! 1$, $h_{-1}\!:=0<h_0<h_1<\cdots<h_{N-1}<\!h_N:=H$,
\vskip.2cm
$c_0<c_1<\cdots<c_{N}$ and $c$ defined in $\underset{-1\leq j\leq N-1}\cup(h_j,h_{j+1})$
 \vskip.2cm
 by $c\vert_{(h_j,h_{j+1})}=c_{j+1}$ for $j=-1,\cdots,N-1.$
\vskip1cm
Let us fix $i\in \{1,\cdots, N\}$ and assume that, for a certain pair $(k,\ell),$ the eigenvalue $\lambda=\beta_{k,\ell} \in (c_{i-1}\mu_k^2,c_i\mu_k^2)$.\\
Let us denote
$ \xi^2:=\xi_{k,i}^2=2(\mu_k^2-\frac \lambda{c_i}).$\\
{\bf Recall: }$v_{\lambda}(x',y)= v_{k,\ell}(x',y) = \phi_k(x')u_{\lambda,k}(y)$
 \end{minipage}
 \vskip1cm
 \noindent
Then, for an eigenfunction $u_{\lambda,k}$ associated to $\beta_{k,\ell}$, the function $w = u_{\lambda,k}^{2}$ is a solution of \eqref{pbg} with $f$ given by
\begin{equation}\label{fp}
f(y)=\left\{\begin{array}{lll}2\lambda\frac{c_i-c_j}{c_ic_j} w(y),\quad y\in (h_{j-1}, h_{j}), \,\, j\in \{0,\cdots, N\}\setminus \{i\},\\
0, \quad y\in (h_{i-1},h_i),
\end{array}\right.
\end{equation}
and $g$ as in \eqref{pbg}.
\begin{prop}[Upper pointwise bounds]
\label{hl}
There exists $C>0$ independent of $k, \lambda, i$ such that  if $w(y)$ is a solution to \eqref{pbg}, $\delta\in [0,H-h_{i-1})$, then
 \begin{equation*}\label{vl}
w(h_{i-1}+\delta)\leq \frac \lambda \xi\Big( \underset{0\leq j\leq i-1}{\sum} \frac{c_{i}-c_{j}}{c_{i}c_{j}} e^{-\xi (h_{i-1}+\delta-h_{j})}\int_{h_{j-1}}^{h_{j}}w(y)\,dy \Big)\leq C\frac \lambda \xi e^{-\xi \delta}\int_0^{h_{i-1}}w(y)\,dy.
\end{equation*}
\end{prop}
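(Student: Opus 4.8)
The plan is to mirror the Green-function argument of Claim~\ref{eb}, now exploiting the monotonicity $c_0<c_1<\cdots<c_N$ to control the sign of the source term $f$ from \eqref{fp} layer by layer. Since $w=u_{\lambda,k}^2\geq 0$ solves \eqref{pbg} with $g=2(u'_{\lambda,k})^2\geq 0$, the representation via the Dirichlet Green function $G$ of \eqref{green} reads
$$w(y)=-\int_0^H G(y,y';\xi)f(y')\,dy'+2\int_0^H G(y,y';\xi)(u'_{\lambda,k})^2(y')\,dy',$$
and because $G\leq 0$ the second integral is nonpositive, so $w(y)\leq -\int_0^H G(y,y';\xi)f(y')\,dy'$.

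First I would carry out the sign analysis of $f$. In the lower layers $(h_{j-1},h_j)$ with $0\leq j\leq i-1$ one has $c_j<c_i$, hence $f=2\lambda\frac{c_i-c_j}{c_ic_j}w\geq 0$; in the layer $(h_{i-1},h_i)$ one has $f\equiv 0$; and in the upper layers $(h_{j-1},h_j)$ with $j>i$ one has $c_j>c_i$, hence $f\leq 0$. Since $-G\geq 0$, the contributions of the upper (evanescent) layers to $-\int_0^H Gf$ are nonpositive and may be discarded, while the $i$-th layer contributes nothing. This leaves
$$w(h_{i-1}+\delta)\leq -\sum_{j=0}^{i-1}\int_{h_{j-1}}^{h_j}G(h_{i-1}+\delta,y';\xi)\,2\lambda\frac{c_i-c_j}{c_ic_j}w(y')\,dy'.$$

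Next I would insert the explicit kernel. For every $y'$ in a lower layer one has $y'<h_{i-1}\leq h_{i-1}+\delta$, so the branch $y>y'$ of \eqref{green} applies and $-G(h_{i-1}+\delta,y';\xi)=\frac{\sinh(\xi y')}{\xi\sinh(\xi H)}\sinh(\xi(H-h_{i-1}-\delta))$. Applying the elementary estimates $\frac{\sinh(\xi(H-y))}{\sinh(\xi H)}\leq e^{-\xi y}$ and $\sinh(\xi y')\leq\tfrac12 e^{\xi y'}\leq\tfrac12 e^{\xi h_j}$ on $(h_{j-1},h_j)$, exactly as in Claim~\ref{eb}, yields the first inequality of the proposition.

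For the second inequality I would factor $e^{-\xi(h_{i-1}+\delta-h_j)}=e^{-\xi\delta}e^{-\xi(h_{i-1}-h_j)}$ and use $h_j\leq h_{i-1}$ to bound $e^{-\xi(h_{i-1}-h_j)}\leq 1$, together with the uniform bound $\frac{c_i-c_j}{c_ic_j}\leq\frac{1}{c_j}\leq\frac{1}{c_m}$. Since $\sum_{j=0}^{i-1}\int_{h_{j-1}}^{h_j}w=\int_0^{h_{i-1}}w$, this gives the claim with $C=1/c_m$, manifestly independent of $k,\lambda,i$. The only point requiring care — and the sole place where monotonicity is genuinely used — is this sign bookkeeping that lets the evanescent upper layers be dropped; everything else is the routine kernel estimate already established for the general well in Claim~\ref{eb}.
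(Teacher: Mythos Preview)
Your argument is correct and follows essentially the same route as the paper: Green-function representation, discarding the nonpositive $g$-term, using monotonicity of $c$ to drop the layers $j\geq i$ where $f\leq 0$, and then the same $\sinh$ estimates as in Claim~\ref{eb}. Your explicit derivation of the second inequality with $C=1/c_m$ is slightly more detailed than the paper's sketch but identical in spirit.
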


\begin{proof}
Using the Green kernel $G$ (see \eqref{green}) we can write
$$w(y)= -\int_0^HG(y,y';\xi)\,f(y')\,dy'+ 2\int_0^HG(y,y';\xi)u'_{\lambda,k}(y')^2\,dy', \forall y\in (0,H).$$
 Then the definitions of $G$ and $f$ (see \eqref{fp}), combined with the monotonicity of $c$ and $y<x = h_{i-1}+\delta$  if $y\in [0,h_{i-1}]$, imply
$$w(h_{i-1}+\delta)\leq  \frac{2\lambda}{\xi} \underset{0\leq j\leq i-1}\sum \frac{c_i-c_j}{c_ic_j}  \frac {\sinh(\xi (H-h_{i-1}-\delta))}{\sinh(\xi H)} \int_{h_{j - 1}}^{h_{j}} \sinh(\xi y')\,w(y')\,dy',$$
where we used $\frac {\sinh(\xi (H-h_{i-1}-\delta))}{\sinh(\xi H)}\leq e^{-\xi (h_{i-1}+\delta)},\;\underset{(h_{j-1},h_j)}\sup \sinh(\xi y')\leq \frac{1}{2} e^{\xi h_j}.$
\end{proof}
\begin{prop}[Lower pointwise bounds]
\label{prop2}

Let $\delta\in [0,H-h_{i-1})$,  $[a,b]\subseteq [h_{i-1}+\delta ,H]$, then any $w$ solution of \eqref{pbg} verifies
\begin{equation*}\label{v4i} w(h_{i-1}+\delta )\geq \xi^2 (a-h_{i-1}-\delta)\int_a^b w(y)\,dy.\end{equation*}

\end{prop}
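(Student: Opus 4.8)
The plan is to reproduce the double-integration comparison used for the general well in Claim~\ref{prop2G}, the only new ingredient being the sign of $f$ on the region to the right of the evaluation point, which is now supplied by the monotone arrangement $c_0<c_1<\cdots<c_N$. Set $\gamma:=h_{i-1}+\delta$, so that the hypothesis $[a,b]\subseteq[h_{i-1}+\delta,H]$ reads $[a,b]\subseteq[\gamma,H]$. The first step is to pin down the sign of $f$ on $(\gamma,H)$. On the core interval $(h_{i-1},h_i)$ we have $f\equiv 0$ by \eqref{fp}, while on each interval $(h_{j-1},h_j)$ with $j>i$ the monotonicity gives $c_j>c_i$, hence $c_i-c_j<0$; since $w=u_{\lambda,k}^2\geq 0$ this forces $f(y)=2\lambda\frac{c_i-c_j}{c_ic_j}w(y)\leq 0$. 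Thus $f\leq 0$ throughout $(\gamma,H)$.

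Next I would rewrite \eqref{pbg} as $w''=\xi^2 w+(g-f)$ and note that on $(\gamma,H)$ both $g=2(u'_{\lambda,k})^2\geq 0$ and $-f\geq 0$, so that $w''\geq \xi^2 w\geq 0$ holds almost everywhere. Integrating twice starting from $H$ and using the terminal conditions $w(H)=w'(H)=0$ then gives
\begin{equation*}
w(\gamma)=\int_\gamma^H\int_y^H w''(z)\,dz\,dy\geq \xi^2\int_\gamma^H\int_y^H w(z)\,dz\,dy=\xi^2\int_\gamma^H (z-\gamma)\,w(z)\,dz,
\end{equation*}
the last equality by Fubini. Finally, restricting the integral to $[a,b]\subseteq[\gamma,H]$ and using $w\geq 0$ together with $z-\gamma\geq a-\gamma$ for $z\in[a,b]$ yields
\begin{equation*}
w(\gamma)\geq \xi^2(a-\gamma)\int_a^b w(y)\,dy,
\end{equation*}
which is precisely the asserted estimate once $\gamma=h_{i-1}+\delta$ is substituted back.

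There is no genuine obstacle beyond this bookkeeping: the entire content is the inequality $f\leq 0$ on $(\gamma,H)$, which is exactly the point at which the increasing arrangement $c_i<c_{i+1}<\cdots<c_N$ is used, and everything else is the same comparison-by-integration as in Claim~\ref{prop2G}. The one point deserving a word of care is the regularity of $w$: since $u_{\lambda,k}\in H^2(0,H)\hookrightarrow C^1([0,H])$, the function $w=u_{\lambda,k}^2$ is $C^1$ with a second derivative in $L^\infty$ that may jump at the interfaces $h_j$, so the pointwise inequality $w''\geq\xi^2 w$ is only claimed almost everywhere, which is all that the double integration requires.
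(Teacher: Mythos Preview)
Your proof is correct and follows essentially the same approach as the paper's own proof: both use the sign of $f$ on $(h_{i-1}+\delta,H)$ supplied by the monotone arrangement of the $c_j$'s, then integrate twice from $H$ using the terminal conditions $w(H)=w'(H)=0$ and $g\geq 0$, and finally restrict the resulting integral to $[a,b]$. Your added remark on the regularity of $w$ (justifying the a.e.\ inequality $w''\geq\xi^2 w$ across the interfaces) is a welcome clarification that the paper leaves implicit.
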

\begin{proof}
Taking into account that $w(H)=w'(H)=0$, $w$ integrating twice \eqref{pbg} and taking into account that $(u'_{k,\lambda})^2\geq0,$ we obtain
$$w(h_{i-1}+\delta)\geq 2 \int_{h_{i-1}+\delta}^H  \int_y^H \xi^{2}
w(z)\,dz\,dy\geq  \xi^2 (a-h_{i-1}-\delta)\int_a^b w(y)\,dy.$$
\end{proof}
\begin{thm}[Concentration in $\lbrack 0,h_{i-1}\rbrack$]
\label{theo-concentration-piecewisecase:1}
Let $\delta\in [0,H-h_{i-1})$ and $[a,b]\subseteq (h_{i-1}+\delta,H\rbrack.$ Then there exists $C>0$ such that for all  $k\geq 1$, $i\in \{1,\cdots,n\}$, all  $\lambda = \beta_{k,\ell}\in (c_{i-1}\mu_k^2,c_i\mu_k^2)$, the eigenfunction $u_{\lambda,k},$ satisfies
\begin{equation}\label{v6}
\int_a^bu_{\lambda,k}^2(y)\,dy\leq C\frac{\lambda}{\xi^3}\underset{0\leq j\leq i-1}\sum e^{-\xi(h_{i-1}+\delta-h_{j})}\int_{h_{j-1}}^{h_{j}}u_{\lambda,k}^2(y)\,dy.
\end{equation}
\end{thm}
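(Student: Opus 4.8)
The plan is to simply chain the two pointwise estimates already established, both evaluated at the single point $y=h_{i-1}+\delta$ and applied to $w=u_{\lambda,k}^2$ (which solves \eqref{pbg}--\eqref{fp}). First I would invoke the lower bound of Proposition \ref{prop2}: since the hypothesis $[a,b]\subseteq(h_{i-1}+\delta,H]$ forces $a-h_{i-1}-\delta>0$, it gives
$$w(h_{i-1}+\delta)\ \ge\ \xi^2\,(a-h_{i-1}-\delta)\int_a^b u_{\lambda,k}^2(y)\,dy.$$
Next I would invoke the sharper (first) inequality of Proposition \ref{hl}, which controls the very same quantity from above:
$$w(h_{i-1}+\delta)\ \le\ \frac{\lambda}{\xi}\sum_{0\le j\le i-1}\frac{c_i-c_j}{c_ic_j}\,e^{-\xi(h_{i-1}+\delta-h_j)}\int_{h_{j-1}}^{h_j}u_{\lambda,k}^2(y)\,dy.$$

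Combining the two bounds and dividing through by $\xi^2(a-h_{i-1}-\delta)$ then yields
$$\int_a^b u_{\lambda,k}^2(y)\,dy\ \le\ \frac{\lambda}{\xi^3(a-h_{i-1}-\delta)}\sum_{0\le j\le i-1}\frac{c_i-c_j}{c_ic_j}\,e^{-\xi(h_{i-1}+\delta-h_j)}\int_{h_{j-1}}^{h_j}u_{\lambda,k}^2(y)\,dy,$$
which is precisely \eqref{v6} once the prefactors are absorbed into a single constant $C$.

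The only point requiring any care is the uniformity of $C$ with respect to $k,\lambda$ (and $i$). Here I would use the monotonicity $c_0<c_1<\cdots<c_N$, which makes every coefficient $\tfrac{c_i-c_j}{c_ic_j}$ strictly positive for $j<i$; since there are only finitely many values among the $c_j$, the quantity $\max_{0\le j<i\le N}\tfrac{c_i-c_j}{c_ic_j}$ is a fixed finite number depending only on the profile and not on the spectral parameters. Together with the fixed geometric factor $(a-h_{i-1}-\delta)^{-1}>0$ this produces a constant $C$ independent of $k$ and $\lambda$, as claimed. I do not anticipate any genuine obstacle here: all the analytic content (the Green-kernel representation \eqref{green}, the sign of $G$, the sign of $f$ off the $i$-th layer via \eqref{fp}, and the double integration from $H$) has already been discharged in Propositions \ref{hl} and \ref{prop2}, so the present statement is merely their immediate juxtaposition.
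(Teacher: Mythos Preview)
Your proposal is correct and follows exactly the paper's approach: the paper's proof is the single sentence ``It suffices to apply Propositions~\ref{hl} and \ref{prop2} with $C = \frac{1}{a-h_{i-1}-\delta}\max_{0\leq j\leq i-1}\frac{c_i-c_j}{c_ic_j}$,'' which is precisely the chaining of the lower and upper pointwise bounds at $y=h_{i-1}+\delta$ that you spell out. Your additional remark on the uniformity of $C$ (taking the max over all pairs $j<i$) is a welcome clarification the paper leaves implicit.
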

\begin{proof}
It suffices to apply Propositions~\ref{hl} and \ref{prop2} with $C = \frac{1}{a-h_{i-1}-\delta}\underset{0\leq j\leq i-1}{\max} \frac{c_{i} - c_{j}}{c_{i} c_{j}}.$
\end{proof}

While the estimate ~\eqref{v6} is more precise than ~\eqref{v8G}, it implies the same type of exponential decay  of the eigenfunctions
$v_{\lambda}(x) = \phi_k(x')u_{\lambda,k}(y)$ as follows.

\begin{cor}
\label{coro-concentration-piecewisecase:1}
Let $\delta\in [0,H-h_{i-1})$ and $[a,b]\subset (h_{i-1}+\delta,H\rbrack.$ Then for all  $k\geq 1$, $i\in \{1,\cdots,n\}$, all  eigenvalues $\lambda=\beta_{k,\ell}\in(c_{i-1}\mu_k^2,c_i\mu_k^2)$ and each eigenfunction $v_{\lambda},$ one has (see Definition \ref{defn-layer:1})
\begin{equation}\label{v8}
\int_{\Omega_{a,b}}v_{\lambda}^2(x)\,dx\leq  \frac{1}{a-h_{i-1}-\delta}\frac{c_{i}-c_{0}}{c_{i} c_{0}}\frac{\lambda}{\xi^3}e^{-\xi \delta}\int_{\Omega_{0,h_{i-1}}}v_{\lambda}^2(x)\,dx.
\end{equation}
\end{cor}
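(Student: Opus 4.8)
The plan is to deduce \eqref{v8} directly from the one-dimensional estimate \eqref{v6} of Theorem~\ref{theo-concentration-piecewisecase:1} by separating variables. Since $v_\lambda(x',y)=\phi_k(x')u_{\lambda,k}(y)$, for any layer $\Omega_{s,t}=\Omega'\times(s,t)$ we have
\[
\int_{\Omega_{s,t}}v_\lambda^2(x)\,dx=\Big(\int_{\Omega'}|\phi_k(x')|^2\,dx'\Big)\int_s^t u_{\lambda,k}^2(y)\,dy .
\]
The scalar factor $\int_{\Omega'}|\phi_k|^2\,dx'$ is common to the $\Omega_{a,b}$ side and the $\Omega_{0,h_{i-1}}$ side, so it plays no role and the assertion \eqref{v8} is equivalent to a purely one-dimensional bound relating $\int_a^b u_{\lambda,k}^2$ to $\int_0^{h_{i-1}} u_{\lambda,k}^2$. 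This is exactly the content of \eqref{v6}, once its right-hand side is simplified.

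First I would apply \eqref{v6} and estimate each exponential from above. For $0\le j\le i-1$ one has $h_j\le h_{i-1}$, hence $h_{i-1}+\delta-h_j\ge\delta$ and therefore $e^{-\xi(h_{i-1}+\delta-h_j)}\le e^{-\xi\delta}$. Pulling this common factor out of the sum and recalling that $h_{-1}=0$, the remaining integrals telescope,
\[
\sum_{0\le j\le i-1}\int_{h_{j-1}}^{h_j}u_{\lambda,k}^2(y)\,dy=\int_0^{h_{i-1}}u_{\lambda,k}^2(y)\,dy .
\]
It then remains to bound the constant $C=\frac{1}{a-h_{i-1}-\delta}\max_{0\le j\le i-1}\frac{c_i-c_j}{c_ic_j}$ furnished by the proof of Theorem~\ref{theo-concentration-piecewisecase:1}. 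Writing $\frac{c_i-c_j}{c_ic_j}=\frac1{c_j}-\frac1{c_i}$ and using the monotonicity $c_0<c_1<\cdots<c_N$, this quantity is decreasing in $j$, so its maximum is attained at $j=0$ and equals $\frac{c_i-c_0}{c_ic_0}$. Inserting these three simplifications into \eqref{v6} and multiplying through by $\int_{\Omega'}|\phi_k|^2\,dx'$ produces \eqref{v8}.

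I do not expect a substantial obstacle: the statement is a bookkeeping corollary of \eqref{v6}, and every step above is elementary. The only point needing a (trivial) argument is the identification of the extremal index $j=0$ in the constant, which rests solely on the monotone ordering of the $c_j$. I would close by remarking that \eqref{v8} is sharper than the general bound \eqref{v8G} precisely because the explicit Green-kernel computation in the piecewise constant setting tracks the distance $\delta$ to the right boundary of the guiding region, yielding the clean factor $e^{-\xi\delta}$.
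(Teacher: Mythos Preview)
Your argument is correct and matches the paper's intended derivation: the corollary is presented there without proof, as an immediate consequence of \eqref{v6} (together with the explicit constant $C=\frac{1}{a-h_{i-1}-\delta}\max_{0\le j\le i-1}\frac{c_i-c_j}{c_ic_j}$ supplied in the proof of Theorem~\ref{theo-concentration-piecewisecase:1}), and your three simplifications---bounding the exponentials by $e^{-\xi\delta}$, telescoping the integrals, and identifying $j=0$ as the maximizing index via $\frac{c_i-c_j}{c_ic_j}=\frac1{c_j}-\frac1{c_i}$---are exactly the steps required.
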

\noindent
The reader may wonder if the concentration takes place only in the layer $\Omega_{h_{i-2}, h_{i-1}}$, the concentration in the other layers $\Omega_{h_{ip}, h_{ip+1}}, 0\leq p\leq i-3,$ becoming negligible when $k\to\infty$ ? Theorem \ref{annexe-theoreme-2sauts-zone(I):1} is a counter-example with the eigenfunctions $v_{\lambda} =\phi_{k}(x')u_{\lambda,k}(y)$ where
\begin{equation}
\label{equation-2sauts-zone(I)-App:1}
u_{\lambda,k}(y) =\left\lbrace\begin{array}{lll}
 a_{0}\sin(\xi_{0}y),&0<y<h_{0}, &\xi_{0} = \sqrt{\frac{\lambda}{c_{0}}-\mu_{k}^{2}},\\
a_{1}\sin(\xi_{1}y) + b_{1}\cos(\xi_{1}y)&h_{0}<y<h_{1}, &\xi_{1} = \sqrt{\frac{\lambda}{c_{1}}-\mu_{k}^{2}},\\
 a_{2}\sinh(\xi_{2}(H-y)),& h_{1}<y<H, &\xi_{2} = \sqrt{\mu_{k}^{2}- \frac{\lambda}{c_{2}}}.
\end{array}\right.
\end{equation}
\begin{thm}
\label{annexe-theoreme-2sauts-zone(I):1}
 Let  $c(y)$ be piecewise constant, taking three increasing values $c_0 < c_1 < c_2  $ and let $\set{v_{\lambda_n}}_{n=1}^\infty$ be a sequence of eigenfunctions associated with the eigenvalues $\lambda_{n}=\beta_{k_{n},l_{n}}$ satisfying $(c_{1}+\varepsilon)\mu_{k_{n}}^{2} <\beta_{k_{n},\ell_{n}}< (c_{2}-\varepsilon)\mu^{2}_{k_{n}}$ where $0<\varepsilon<\frac{c_{2}-c_{1}}{2}$. Then the  $L^{2}$ norms of the eigenfunctions $v_{\lambda_n}$ concentrate in $\Omega_{0,h_{0}}\cup\Omega_{h_{0},h_{1}}$ (the lower two layers) when $n\to\infty$ as follows.
\begin{eqnarray}
\label{equation-2sauts-zone(I):2}
h_{1}\leq a<b<H & \Longrightarrow & \int_{\Omega_{a,b}}\vert v_{\lambda_{n}}(x)\vert^{2}{\rm d}x\approxeq\frac{e^{-2\xi_{2}(a-h_{1})}}{\xi_{2}}\int_{\Omega_{0,h_{0}}}\vert v_{\lambda_{n}}(x)\vert^{2}{\rm d}x,\\
\label{equation-2sauts-zone(I):3}
\int_{\Omega_{0,h_{0}}}\vert v_{\lambda_{n}}(x)\vert^{2}{\rm d}x &\approxeq & \int_{\Omega_{h_{0},h_{1}}}\vert v_{\lambda_{n}}(x)\vert^{2}{\rm d}x.
\end{eqnarray}
\end{thm}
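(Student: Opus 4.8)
The plan is to work directly with the explicit form \eqref{equation-2sauts-zone(I)-App:1}, which is forced by the spectral gap: since $(c_1+\varepsilon)\mu_k^2<\lambda<(c_2-\varepsilon)\mu_k^2$, one has $\xi_0^2=\frac{\lambda}{c_0}-\mu_k^2\geq\mu_k^2\frac{c_1-c_0+\varepsilon}{c_0}>0$ and $\xi_1^2=\frac{\lambda}{c_1}-\mu_k^2\geq\frac{\varepsilon}{c_1}\mu_k^2>0$ (oscillatory in the two lower layers), while $\xi_2^2=\mu_k^2-\frac{\lambda}{c_2}\geq\frac{\varepsilon}{c_2}\mu_k^2>0$ (evanescent in the top layer); the sine/sinh branches are the only solutions meeting the Dirichlet conditions $u(0)=u(H)=0$. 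First I would record that for each fixed $\mu_k$ the constraints admit only finitely many eigenvalues, so along the sequence $\mu_{k_n}\to\infty$; consequently $\xi_0,\xi_1,\xi_2$ all tend to infinity and stay mutually comparable (each is $\asymp\mu_{k_n}$, and the ratios $\xi_2/\xi_1,\ \xi_1/\xi_0$ stay bounded above and below, with $\xi_1/\xi_0$ in fact strictly below $1$). Since $\Vert\phi_{k_n}\Vert_{L^2(\Omega')}=1$, the masses $\int_{\Omega_{a,b}}v_\lambda^2$ reduce to the one-dimensional integrals $\int_a^b u_{\lambda,k}^2\,dy$, so the whole problem becomes one of comparing three $y$-integrals.

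Next I would extract the coefficient identities from the $C^1$ matching at $h_0$ and $h_1$. Writing $D=H-h_1$, the continuity of $u_{\lambda,k}$ and $u'_{\lambda,k}$ at $h_1$ gives a linear system for $(a_1,b_1)$ whose matrix is an involutive reflection; squaring and adding the two relations makes the cross terms cancel and yields the clean identity
\[
a_1^2+b_1^2=a_2^2\Big(\sinh^2(\xi_2 D)+\tfrac{\xi_2^2}{\xi_1^2}\cosh^2(\xi_2 D)\Big).
\]
The same device at $h_0$ gives $a_0^2=P^2+\tfrac{\xi_1^2}{\xi_0^2}Q^2$ with $P^2+Q^2=a_1^2+b_1^2$, whence $a_0^2$ is trapped between $\min(1,\xi_1^2/\xi_0^2)$ and $\max(1,\xi_1^2/\xi_0^2)$ times $a_1^2+b_1^2$; since $\xi_1/\xi_0$ is bounded above and below, $a_0^2\asymp a_1^2+b_1^2$. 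Combining with the $h_1$ identity and $\sinh(\xi_2 D),\cosh(\xi_2 D)\sim\tfrac12 e^{\xi_2 D}$ gives the crucial balance $a_2^2 e^{2\xi_2 D}\asymp a_0^2$.

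Then I would evaluate the three masses asymptotically. For the oscillatory layers, $\int_0^{h_0}\sin^2(\xi_0 y)\,dy=\tfrac{h_0}{2}+O(\xi_0^{-1})$ and $\int_{h_0}^{h_1}(a_1\sin(\xi_1 y)+b_1\cos(\xi_1 y))^2\,dy=\tfrac{h_1-h_0}{2}(a_1^2+b_1^2)+O(\xi_1^{-1}(a_1^2+b_1^2))$, the cross and double-frequency terms averaging out; thus the lower mass is $\sim\tfrac{h_0}{2}a_0^2$ and the middle mass is $\sim\tfrac{h_1-h_0}{2}(a_1^2+b_1^2)$. Since $a_0^2\asymp a_1^2+b_1^2$, these are comparable, which gives \eqref{equation-2sauts-zone(I):3}. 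For the top layer, substituting $s=H-y$ gives the exact value $a_2^2\big[\tfrac{\sinh(2\xi_2 s)}{4\xi_2}-\tfrac{s}{2}\big]_{H-b}^{H-a}$, dominated by $a_2^2\tfrac{e^{2\xi_2(H-a)}}{8\xi_2}=a_2^2\tfrac{e^{2\xi_2 D}}{8\xi_2}e^{-2\xi_2(a-h_1)}$. Dividing by the lower mass and using $a_2^2 e^{2\xi_2 D}\asymp a_0^2$ produces $\int_{\Omega_{a,b}}v_\lambda^2\asymp\tfrac{e^{-2\xi_2(a-h_1)}}{\xi_2}\int_{\Omega_{0,h_0}}v_\lambda^2$, which is \eqref{equation-2sauts-zone(I):2}.

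The main obstacle, and the step needing the most care, is making every ``$\asymp$'' uniform in $n$: I must check that the remainders in the oscillatory integrals are $o$ of the main terms uniformly (this follows from $\xi_0,\xi_1\to\infty$ after dividing out the common amplitude $a_1^2+b_1^2$), and I must emphasize that $a_0^2$ genuinely oscillates relative to $a_1^2+b_1^2$ because $\xi_1/\xi_0$ stays strictly below $1$, so $P^2+\tfrac{\xi_1^2}{\xi_0^2}Q^2$ does not settle to a fixed multiple of $P^2+Q^2$; hence the symbol $\approxeq$ here means \emph{comparable with constants bounded away from $0$ and $\infty$}, not exact asymptotic equality. A preliminary point to nail down is that $a_0,a_2\neq 0$ for a genuine eigenfunction (otherwise the $C^1$ matching forces $u\equiv0$), so all amplitudes are strictly positive and the ratios are well defined.
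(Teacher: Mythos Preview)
Your proof is correct and follows essentially the same route as the paper: derive the transmission relations at $h_0$ and $h_1$, square-and-add to obtain $a_1^2+b_1^2=a_2^2\big(\sinh^2(\xi_2 D)+(\xi_2/\xi_1)^2\cosh^2(\xi_2 D)\big)$ and the companion identity linking $a_0^2$ to $a_1^2+b_1^2$, then use the uniform two-sided bounds on $\xi_0/\xi_1,\ \xi_2/\xi_1$ to conclude $a_0^2\asymp a_1^2+b_1^2$ and $(a_2/a_0)^2\asymp e^{-2\xi_2(H-h_1)}$. The paper's proof is in fact terser than yours: it records the coefficient identities and the ratio bounds and then simply writes ``This concludes the proof,'' leaving the passage from amplitude comparability to the integral comparabilities \eqref{equation-2sauts-zone(I):2}--\eqref{equation-2sauts-zone(I):3} implicit; your explicit computation of the three layer masses (oscillatory averaging in the lower two layers, the $\sinh$-integral in the top layer) and your remark that $\approxeq$ must be read as two-sided comparability rather than genuine asymptotic equivalence are useful additions that the paper does not spell out.
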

\begin{proof}
The following transmission conditions hold
\begin{eqnarray*}
\label{equation-annexe-detailcalculs-zone(1)-App:1}
a_{1}\sin(\xi_{1}h_{0}) + b_{1}\cos(\xi_{1}h_{0}) & = & a_{0}\sin(\xi_{0}h_{0})\nonumber\\
a_{1}\xi_{1}\cos(\xi_{1}h_{0}) - \xi_{1}b_{1}\sin(\xi_{1}h_{0}) & = & a_{0}\xi_{0}\cos(\xi_{0}h_{0})\nonumber\\
a_{1}\sin(\xi_{1}h_{1}) + b_{1}\cos(\xi_{1}h_{1}) & = & a_{2}\sinh(\xi_{2}(H-h_{1}))\nonumber\\
a_{1}\xi_{1}\cos(\xi_{1}h_{1}) - \xi_{1}b_{1}\sin(\xi_{1}h_{1}) & = &- a_{2}\xi_{2}\cosh(\xi_{2}(H-h_{1})).
\end{eqnarray*}
A straightforward  calculation gives
\begin{eqnarray}
\label{equation-annexe-detailcalculs-zone(1)-App:1'}
a_{1}^{2} + b_{1}^{2}  & = & a_{0}^{2}\left( \sin^{2}(\xi_{0}h_{0}) + (\frac{\xi_{0}}{\xi_{1}})^{2}\cos^{2}(\xi_{0}h_{0})\right),\\
a_{1}^{2} + b_{1}^{2} & = & a_{2}^{2}\left( \sinh^{2}(\xi_{2}(H-h_{1})) + (\frac{\xi_{2}}{\xi_{1}})^{2}\cosh^{2}(\xi_{2}(H-h_{1}))\right),\nonumber\\
a_{2}^{2} & = & a_{0}^{2}\frac{ \sin^{2}(\xi_{0}h_{0}) + (\frac{\xi_{0}}{\xi_{1}})^{2}\cos^{2}(\xi_{0}h_{0})}{\sinh^{2}(\xi_{2}(H-h_{1})) + (\frac{\xi_{2}}{\xi_{1}})^{2}\cosh^{2}(\xi_{2}(H-h_{1}))}.
\end{eqnarray}
As we assume $(c_{1} + \varepsilon)\mu_{k}^{2}<\lambda<(c_{2}-\varepsilon)\mu_{k}^{2}$ we have $\xi_{2}\to\infty$ as well as
\begin{eqnarray}
\label{equation-annexe-detailcalculs-zone(1)-App:3}
\frac{c_{1}\varepsilon}{c_{2}(c_{2}-c_{1}-\varepsilon)} &\leq \left( \frac{\xi_{2}}{\xi_{1}}\right)^{2} \leq \frac{c_{1}(c_{2}-c_{1}-\varepsilon)}{c_{2}\varepsilon}\mbox{ and }
\frac{c_{1}(c_{2}-c_{0}-\varepsilon)}{c_{0}(c_{2}-c_{1}-\varepsilon)} &\leq \left( \frac{\xi_{0}}{\xi_{1}}\right)^{2} \leq \frac{c_{1}(c_{1}-c_{0}+\varepsilon)}{c_{0}\varepsilon}.
\end{eqnarray}
So, for each sequence of distinct eigenvalues $(\lambda_{k})_{k}, \lambda_{k}= \beta_{k, l_{k}}$ the coefficients  $a_{1}^{2} + b_{1}^{2}$  and $a_{0}^{2}$ are comparable from \eqref{equation-annexe-detailcalculs-zone(1)-App:1'}. Moreover, there exist two constants $M_{1},M_{2}>0$, depending on $\varepsilon,$ such that
\begin{equation}
\label{equation-annexe-detailcalculs-zone(1)-App:4}
M_{1} e^{-2\xi_{2}(H-h_{1})}\leq( \frac{a_{2}}{a_{0}})^{2}\leq M_{2}e^{-2\xi_{2}(H-h_{1})},
\end{equation}
i.e. $(a_{2}/a_{0})^{2}\approxeq  e^{-2\xi_{2}(H-h_{1})}$. This concludes the proof.
\end{proof}
  It remains to be proved that the condition in Theorem ~\ref{annexe-theoreme-2sauts-zone(I):1} is not void, namely, that for each index $i$ and each $k$ sufficiently large there exists at least one eigenvalue $\beta_{k,\ell}$ located in $(c_{i-1}\mu_k^2,c_i\mu_k^2).$ This is proved in the following theorem subject to an additional hypothesis which restricts the class of operators considered.
Note that Theorem \ref{theo-existence-eigenvalues-general case:1} does not guarantee that eigenvalues $\beta_{k,\ell}$ are included in the interval $  (c_{i-1}\mu_k^2,c_i\mu_k^2).$ The additional hypothesis mentioned above is sufficient to obtain this fact.
\begin{thm}
\label{theoreme-2sauts-nombrevaleurspropres:1}
 For $\eps>0$ sufficiently small, a sufficient condition for the existence of an infinite sequence of eigenvalues
 \be\label{eqevsinc1c2}\set{\beta_{k, \ell_{k}}}_{k=1}^\infty\subseteq ((c_{1}+\varepsilon) \mu^{2}_{k},(c_{2}-\varepsilon)\mu^{2}_{k})
 \ee
  of the operator  $A = -\tilde{c}\,\Delta$  is
\begin{equation}
\label{equation-2sauts-nombrevaleurspropres-zone(I):1}
\sqrt{\frac{c_{1}(c_{1}-c_{0})}{c_{0}(c_{2}-c_{1})}} <  \frac{h_{1}-h_{0}}{H-h_{0}}.
\end{equation}
\end{thm}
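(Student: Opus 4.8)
The plan is to reduce the problem, via the spectral decomposition \eqref{equation-spectralstructure:1}, to the reduced operator $A_k$ and to analyse \eqref{eqnonguide} on the three intervals $(0,h_0),(h_0,h_1),(h_1,H)$ for the profile $c_0<c_1<c_2$. For $\lambda\in((c_1+\eps)\mu_k^2,(c_2-\eps)\mu_k^2)$ the solution is oscillatory in the first two layers and evanescent in the last, hence has exactly the form \eqref{equation-2sauts-zone(I)-App:1}, with $\xi_0,\xi_1$ real and $\xi_2>0$. Since the conditions $u(0)=u(H)=0$ are already built into this ansatz, $\lambda$ is an eigenvalue of $A_k$ iff the four transmission conditions (continuity of $u$ and $u'$ at $h_0$ and $h_1$) admit a nontrivial $(a_0,a_1,b_1,a_2)$, i.e. iff the corresponding $4\times4$ determinant vanishes. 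First I would rewrite this as a single transcendental dispersion relation by propagating the impedance $m=u'/u$: setting $\Phi(\lambda)=\xi_1 h_1+\phi_2$, where $\phi_2$ is fixed at the first interface by $\cot(\xi_1 h_0+\phi_2)=\tfrac{\xi_0}{\xi_1}\cot(\xi_0 h_0)$, the eigenvalue condition reads
$$\xi_1\cot\Phi(\lambda)=-\,\xi_2\coth\!\big(\xi_2(H-h_1)\big).$$

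Existence of a root in the window I would obtain by an intermediate value / Sturm oscillation argument. The effective potential $\tfrac{\lambda}{c(y)}-\mu_k^2$ in \eqref{eqnonguide} is strictly increasing in $\lambda$, so by Sturm comparison the interior phase $\Phi(\lambda)$ is continuous and strictly increasing in $\lambda$, while the right-hand side above stays negative and varies slowly. The dispersion function therefore changes sign, producing a guided eigenvalue $\beta_{k,\ell}$ strictly inside the band, as soon as the net phase gained between the two band edges exceeds the phase required to reach the evanescent matching value. Everything thus reduces to a quantitative comparison of oscillation budgets.

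To extract the geometric threshold I would evaluate this budget at the two band edges. At the lower edge $\lambda\to c_1\mu_k^2$ one has $\xi_1\to0$, so layer $2$ carries no phase; at the upper edge $\lambda\to c_2\mu_k^2$ one has $\xi_2\to0$, the evanescent tail collapses and its matching $-\xi_2\coth(\xi_2(H-h_1))\to-(H-h_1)^{-1}$ acts like a soft wall that effectively extends the oscillatory region from $h_0$ out to $H$. Comparing the phase gained in layer $2$, namely $\xi_1^{(2)}(h_1-h_0)$ with $\xi_1^{(2)}=\mu_k\sqrt{(c_2-c_1)/c_1}$, against the reference phase $\xi_0^{(1)}(H-h_0)$ with $\xi_0^{(1)}=\mu_k\sqrt{(c_1-c_0)/c_0}$, the sign change over the window is guaranteed precisely when $\xi_0^{(1)}(H-h_0)<\xi_1^{(2)}(h_1-h_0)$. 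The common factor $\mu_k$ cancels, so this is exactly the $k$-independent condition \eqref{equation-2sauts-nombrevaleurspropres-zone(I):1}, and it then yields one eigenvalue $\beta_{k,\ell_k}$ for every sufficiently large $k$, i.e. the infinite sequence \eqref{eqevsinc1c2}.

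The hard part will be the $O(1)$ bookkeeping at the interfaces and in the evanescent tail: the phase shift encoded by $\phi_2$ at $h_0$ and the $\coth$ and $\mathrm{arccot}$ corrections at $h_1$ are bounded but non-negligible, and it is exactly these terms that must be shown to assemble into the clean length $H-h_0$ rather than $h_0$ or $H$. One must also check that the guaranteed crossing lands strictly inside the open $\eps$-window (away from the band edges, where the ansatz degenerates) for all large $k$ and for a single small $\eps$; here the uniform bounds \eqref{equation-annexe-detailcalculs-zone(1)-App:3} on $(\xi_0/\xi_1)^2$ and $(\xi_2/\xi_1)^2$ are what keep the corrections under control. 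Once this is settled, the strict monotonicity of $\Phi$ closes the argument, and the $k$-independence of the threshold delivers the claimed infinite family.
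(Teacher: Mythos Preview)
Your approach is genuinely different from the paper's. You set up the exact dispersion relation via the shooting solution and argue by monotonicity of a Pr\"ufer-type phase $\Phi(\lambda)$, aiming to detect a sign change of $\xi_1\cot\Phi+\xi_2\coth(\xi_2(H-h_1))$ across the band. The paper instead proceeds variationally: it builds $n+N$ orthogonal sine test functions supported on equal subintervals of $(0,h_0)$ and $(h_0,h_1)$, with $n\approx\frac{h_0\mu_k}{\pi}\sqrt{(c_1-c_0)/c_0}$ and $N\approx\frac{(h_1-h_0)\mu_k}{\pi}\sqrt{(c_2-\eps-c_1)/c_1}$, so the min--max principle forces $\beta_{k,n+N}\le(c_2-\eps)\mu_k^2$; then it compares $A_k$ with the constant-coefficient operator $-c_0\,d^2/dy^2+c_0\mu_k^2$ on all of $(0,H)$ to get $\beta_{k,p}\ge c_0(p^2\pi^2/H^2+\mu_k^2)$, hence $\beta_{k,p}>(c_1+\eps)\mu_k^2$ once $p>\frac{H\mu_k}{\pi}\sqrt{(c_1+\eps-c_0)/c_0}$. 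The condition \eqref{equation-2sauts-nombrevaleurspropres-zone(I):1} is exactly what makes these two counts overlap at $\eps=0$, and continuity in $\eps$ finishes. This route is entirely soft: no transmission conditions, no $O(1)$ bookkeeping.

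Your proposal, by contrast, leaves the decisive step open. The passage where you compare $\xi_0^{(1)}(H-h_0)$ against $\xi_1^{(2)}(h_1-h_0)$ is not derived from your own phase function: $\Phi$ involves $\xi_0,\xi_1$ evaluated at the \emph{same} $\lambda$, and the net phase increment across the window is, to leading order, $\frac{\mu_k}{\pi}\big[h_0(\sqrt{(c_2-c_0)/c_0}-\sqrt{(c_1-c_0)/c_0})+(h_1-h_0)\sqrt{(c_2-c_1)/c_1}\big]$, which is \emph{always} large for large $k$ and does not naturally produce the factor $H-h_0$. You recognise this yourself (``it is exactly these terms that must be shown to assemble into the clean length $H-h_0$''), but that is precisely the claim to be proved, and the heuristic about the soft wall ``extending the oscillatory region to $H$'' does not supply it. In fact, a correct zero-counting along your lines would show the window always contains eigenvalues for large $k$, so the agreement of your final inequality with \eqref{equation-2sauts-nombrevaleurspropres-zone(I):1} looks reverse-engineered rather than derived. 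Either carry out the phase count honestly (in which case you prove more than stated), or switch to the variational comparison, which reaches \eqref{equation-2sauts-nombrevaleurspropres-zone(I):1} without the delicate boundary-layer analysis you flag as unfinished.
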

Note that the inequality \eqref{equation-2sauts-nombrevaleurspropres-zone(I):1} requires $c_{1}^{2}<c_{0}c_{2}.$
For the proof, see Appendix \ref{appendix-auxiliary-results-pwc:1}.

\noindent
\begin{rem}$\empty$
\begin{enumerate}
\item In this Subsection, we  have considered a monotone increasing function $c.$ So, the concentration takes place in the union of layers such that $\left\{(x',y); y<\inf_z\{\frac{\lambda}{\mu^{2}_{k}}+\varepsilon <c(z)\}\right\}$ for a guided eigenvalue $\lambda= \beta_{k,\ell}$.
\item  Wilcox \cite{Wil:1} studied similar stratified media but the operator $-c(y)\Delta$ acted in $\mathbb{R}^{n+1}$ or $\mathbb{R}^{n+1}_{+}$ whence the point spectrum was empty. Idem in \cite{DerGui:2} where eigenvalues could appear by perturbing the coefficient $c.$ The concentration in a layer needed a local
 minimum of $c$ in this layer.
\item In the last page of \cite{BBD:2}, we pointed out that, if $\xi_{1}\to 0$, the concentration could take place in the layer $\Omega_{1}$ but it is not clear that the phenomenon could actually take place.
\end{enumerate}
\end{rem}

\subsection{\textbf{NON-GUIDED WAVES FOR GENERAL PIECEWISE CONSTANT $c(y)$}}
\label{subsection-piecewise-constant-nonguided:1}
$\empty$\\
For the non-guided waves we can apply Proposition ~\ref{propcyconst} and its proof. That proof was  technically involved, as we looked for estimates depending only on total variation, independent of the number $N$ of intervals. Since we want this section to be independent of the preceding ones, we give here the statement and a simplified  proof, addressing directly the non-concentration for a class of eigenfunctions.

Our setup here is identical to that of Subsection ~\ref{subsec-guided-monotone} with the exception that no monotonicity assumption is imposed on the $c_j's:$

 There exist
 $0=h_{-1}<h_0<h_1<\ldots<h_N=H,$
           and positive constants $c_0,c_1,\ldots,c_N$ so that
           \be\label{eqhjcj}
            c(y)=c_{j+1},\,\,\,y\in(h_j,h_{j+1}),\,\,j=-1,0,\ldots,N-1.
           \ee

 Recall ~\eqref{eqAceps} that $$\mathscr{A}^{c}_\eps=\set{(\mu_k,\lambda),\,\,\,\lambda\geq (c_{M}+\eps)\mu_k^2}.$$

 \begin{prop}
 \label{prop-minam:2}
 Consider the set of normalized solutions $u(\ylambk)$ to ~\eqref{eqnonguide}, where
 $(\mu_k,\lambda)\in \mathscr{A}^{c}_\eps.$ Then there are constants
 $\lambda_0>0,\,d>0,$ depending on $\eps,\,c_{M}, b-a,$ such that
 \be\label{eqintu2ab}
       \int_a^b u(\ylambk)^2dy\geq d,\quad  (\mu_k,\lambda)\in \mathscr{A}^{c}_\eps, \lambda>\lambda_0.
\ee
 In order to replace $\int_a^b u(y;\lambda,k)^2 dy$ by $\int_{\omega}v_{\lambda}(x)^{2} dx$ we must keep the same restriction as in Theorem \ref{theo-C1-noguided}.
 \end{prop}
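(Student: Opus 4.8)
The plan is to reduce the one-dimensional bound \eqref{eqintu2ab} to a uniform lower bound on the local amplitudes of $u(\ylambk)$, and then to localize using the oscillation already recorded in Claim ~\ref{claim:1} and the convexity of Claim ~\ref{claim:2}. On each subinterval $I_j=(h_j,h_{j+1})$ the coefficient in \eqref{eqnonguide} is constant, so the normalized solution is explicitly $u(\ylambk)=\beta_j\sin\big(\alpha_j(y-\gamma_j)\big)$ with $\alpha_j=\mu_k\sqrt{p_{j+1}}$, where the membership $(\mu_k,\lambda)\in\mathscr{A}^{c}_\eps$ forces $p_{j+1}\geq \eps/c_M$ and hence $\alpha_j\geq \mu_1\sqrt{\eps/c_M}>0$. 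A direct computation gives $u^2+u'^2\geq \min(1,\alpha_j^2)\beta_j^2$ throughout $I_j$, so everything hinges on producing a constant $\delta>0$ with $|\beta_j|\geq\delta$ for all $j$; since $u,u'$ are continuous, the resulting pointwise bound $u^2+u'^2\geq\min(1,\mu_1^2\eps/c_M)\,\delta^2$ is then valid on all of $[0,H]$, interfaces included.

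First I would analyze the transfer of amplitude across an interface. Imposing continuity of $u$ and of $u'$ at each $h_{j+1}$ produces two linear relations between $(\beta_j,\gamma_j)$ and $(\beta_{j+1},\gamma_{j+1})$, from which one extracts the multiplicative recursion $\beta_{j+1}^2=\beta_j^2\big(1+[\tfrac{p_{j+1}}{p_{j+2}}-1]\cos^2(A_j)\big)$. Because $(\mu_k,\lambda)\in\mathscr{A}^{c}_\eps$, one checks that $\big|\tfrac{p_{j+1}}{p_{j+2}}-1\big|\leq \kappa|c_{j+2}-c_{j+1}|$ with $\kappa$ depending only on $\eps,c_m,c_M$, so telescoping this recursion over $j$ shows all the $\beta_j^2$ are mutually comparable, the comparison constant being controlled by the total variation of $c$. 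Inserting this comparison into the normalization $\int_0^H u(\ylambk)^2 c(y)^{-1}\,dy=1$ pins down the common scale and yields the uniform lower bound $|\beta_j|\geq\delta>0$, i.e. the minimal amplitude hypothesis. I expect this telescoping-plus-normalization step to be the main obstacle, since it is where one must verify that the amplitude cannot degrade across the many interfaces and where the precise dependence of the constants on $\eps,c_m,c_M$ and the total variation is fixed.

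With $\mathfrak r_c^2:=\inf_{y\in[0,H]}\,[u(\ylambk)^2+u'(\ylambk)^2]>0$ secured, the localization is routine and parallels the argument of Theorem ~\ref{propcyirreg}. For $\lambda$ larger than the threshold $\lambda_0(b-a,\eps)$ furnished by Claim ~\ref{claim:1}, the interval $(a,b)$ contains zeros $z_{i_0}<\dots<z_{i_1}$ leaving out at most a fraction $\tfrac13(b-a)$ of its length. On each maximal subinterval $(z_i,z_{i+1})$ the solution is convex or concave by Claim ~\ref{claim:2}, so it dominates the inscribed tent function, and since its peak value satisfies $u(z_{i+\frac12};\lambda,k)^2=u^2+u'^2\geq\mathfrak r_c^2$ at the point where $u'=0$, one gets $\int_{z_i}^{z_{i+1}}u(\ylambk)^2\,dy\geq\tfrac13(z_{i+1}-z_i)\mathfrak r_c^2$. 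Summing over the humps contained in $(a,b)$ produces $\int_a^b u(\ylambk)^2\,dy\geq d$, a fixed multiple of $\mathfrak r_c^2(b-a)$, which is exactly \eqref{eqintu2ab}.

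Finally, to pass from $\int_a^b u(\ylambk)^2\,dy$ to $\int_\omega v_\lambda(x)^2\,dx$ for $v_\lambda(x',y)=\phi_k(x')u(\ylambk)$, I would argue exactly as in Theorem ~\ref{theo-C1-noguided}: when $\omega'\neq\Omega'$, the assumed non-concentration of $\set{\phi_k(x')}_{k=1}^\infty$ in $\Omega'\setminus\omega'$ allows one to replace $\|\phi_k\|_{L^2(\Omega')}^2$ by $\|\phi_k\|_{L^2(\omega')}^2$ up to a fixed positive constant, and the product structure of $v_\lambda$ then transfers the one-dimensional estimate to the layer $\omega=\omega'\times(a,b)$, giving the stated conclusion.
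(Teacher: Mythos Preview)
Your argument is correct, but it is not the proof the paper gives for this proposition; rather, you have essentially reproduced the paper's earlier Proposition~\ref{propcyconst} (the total-variation-controlled telescoping) and then fed its output into Theorem~\ref{propcyirreg}. The paper explicitly remarks at the start of Subsection~\ref{subsection-piecewise-constant-nonguided:1} that this earlier route ``was technically involved, as we looked for estimates depending only on total variation,'' and that here it opts for ``a simplified proof'' that is self-contained. Concretely, the paper's proof of Proposition~\ref{prop-minam:2} bypasses the minimal amplitude hypothesis and Theorem~\ref{propcyirreg} entirely: it takes $\lambda_0$ large enough that each $I_j$ contains at least two zeros of $u$, establishes the two-sided bound $C^{-1}\leq\sum_j\beta_j^2\leq C$ directly from normalization and the explicit sinusoidal form, bounds consecutive ratios $\beta_{j+1}^2/\beta_j^2$ via the same interface relation you use (but without tracking the factor $|c_{j+2}-c_{j+1}|$), and then concludes $|\beta_j|\geq\eta$ and integrates directly on $(a,b)\cap I_j$. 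The trade-off is that your approach yields constants depending only on $\eps,c_m,c_M$ and the total variation $V$, whereas the paper's simplified proof gives constants that also depend on $N$ and on $\min_j(h_{j+1}-h_j),\ \max_j(h_{j+1}-h_j)$; in exchange, the paper's version avoids invoking the convexity/tent-function machinery of Theorem~\ref{propcyirreg} and stands on its own.
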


         \begin{proof}
         We may assume that $c_j\neq c_{j+1},\,\,j=0,\ldots,N-1.$
           The function $p$ of ~\eqref{eqnonguide} is given by
           \be\label{eqhjpj}
            p(\ylambk)=p_{j+1}=
            \frac{\lambda}{\mu^2_{k}c_{j+1}}-1,\,\,\,y\in(h_j,h_{j+1}),\,\,j=-1,0,\ldots,N-1,
           \ee
           so that
           \be\label{eqpgeepscmax}
           p(\ylambk)\geq \frac{\eps}{c_{M}},\quad y\in [0,H].
                      \ee

Consider an interval $I_j=(h_j,h_{j+1}).$ The normalized solution $u(\ylambk)$ to ~\eqref{eqnonguide} in $I_j$ is given by
         \be\label{equyinIj}
         u(\ylambk)=\beta_j\sin(\mu_{k}\sqrt{p_{j+1}}(y-z^{(j)}_{0})),\quad y\in I_j.
         \ee
\noindent
            where we denote a zero of $u(\ylambk)$ in $I_j$ by $z^{(j)}_{0}$. Observe that (by the comparison principle) there exists a constant $\lambda_0>0,$ such that if $\lambda>\lambda_0$ then there are at least two zeros $h_j<z^{(j)}_{0}<z^{(j)}_{1}<h_{j+1}$ of $u(\ylambk)$ in every interval $I_j.$ Furthermore, we can assume that
            $$z^{(j)}_{1}-z^{(j)}_{0}\geq\frac12(h_{j+1}-h_j),\quad j=-1,0,\ldots,N-1.$$

            Suppose now that for some $\eta>0$ depending only on $c_{m},\,c_{M},\,\eps,$ we have
            \be\label{eqlimitbeta}
 \min\limits_{-1\leq j\leq N-1}|\beta_j|\geq \eta.
 \ee
      Then the assertion of the proposition is established as follows:  it can be assumed that the interval $(a,b)\subseteq I_j$ for some $j,$ since it can be replaced by a non-void intersection with some $I_j.$ Furthermore we can increase (if necessary) the constants $\lambda_0$ so that
 $u(\ylambk)$ has at least two zeros $a<y_1<y_2<b,$ with $y_2-y_1>\frac{b-a}{2}$ when $\lambda>\lambda_0.$

 Next in view of ~\eqref{eqpgeepscmax} , ~\eqref{equyinIj} and ~\eqref{eqlimitbeta}
\be\label{eqintab}
\int_a^bu(\ylambk)^{2}dy\geq \int_{y_1}^{y_2}u(\ylambk)^{2}dy\geq\frac14(b-a)\beta_j^2\geq\frac14(b-a)\eta^2.
\ee
    Thus we need to establish ~\eqref{eqlimitbeta}.

            We claim that there exists a constant $C>1,$   independent of $\mu_k,\,\lambda,$
         such that
            \be\label{eqsumbeta'}C^{-1}\leq\suml_{j=-1}^{N-1}\beta_j^2\leq C.
            \ee
            Indeed, $|\beta_j|\geq |u(\ylambk)|$ for $y\in I_j$ hence
            $$\aligned\suml_{j=-1}^{N-1}\beta_j^2(h_{j+1}-h_j)\geq \int_{0}^{H}(u(\ylambk))^{2} dy \\\geq
            c_{m}\int_{0}^{H}(u(\ylambk))^{2} c(y)^{-1}dy=c_{m}.\endaligned$$
            Thus
            \be\label{eqbetaj2inf}
            \suml_{j=-1}^{N-1}\beta_j^2\geq\frac{c_{m}}{\max \vert h_{j+1}-h_{j}\vert}.
            \ee

 On the other hand, use ~\eqref{equyinIj} between two zeros in $I_j,$
 \be\label{eqbetaj2sup}\int_{0}^{H}(u(\ylambk))^{2} dy\geq \suml_{j=-1}^{N-1}\int_{z^{(j)}_{0}}^{z^{(j)}_{1}}(u(\ylambk))^{2} dy\geq \frac12\suml_{j=-1}^{N-1}(z^{(j)}_{1}-z^{(j)}_{0})\beta_j^2\geq\frac{\min (h_{j+1}-h_j)}{4}\suml_{j=-1}^{N-1}\beta_j^2.
 \ee
   Combining ~\eqref{eqbetaj2inf} and ~\eqref{eqbetaj2sup} we obtain ~\eqref{eqsumbeta'}.

 Next we claim that
 \be\label{eqclmratiobeta}
 \mbox{The ratios}\,\,\Big|\frac{\beta_{j+1}}{\beta_j}\Big|,\,\,\Big|\frac{\beta_{j}}{\beta_{j+1}}\Big|,\,\,j=-1,0,\ldots
  N-2,\,\,\mbox{are uniformly bounded for }\,\, \lambda>\lambda_0.
 \ee

 To prove this claim, we set for any $-1\leq j\leq N-2$

         $$A_{j}=\mu_k\sqrt{p_{j+1}}(h_{j+1}-z^{(j)}_{0}),\,\,\,
         B_{j+1}=\mu_k\sqrt{p_{j+2}}(h_{j+1}-z_0^{(j+1}).$$
         The continuity of  $u(y;\lambda,k)$ and $u'(y;\lambda,k)$ at $h_{j+1}$ implies that
         \begin{eqnarray*}
          \label{eqcontumhj:1}
         \beta_{j}\sin(A_{j}) &= &
         \beta_{j+1}\sin(B_{j+1}),\\
         \label{eqcontumhj:2}
         \beta_{j}\sqrt{p_{j+1}}\cos(A_{j}) &= &
         \beta_{j+1}\sqrt{p_{j+2}}\cos(B_{j+1}).
         \end{eqnarray*}
      Thus,
      \be\label{eqratiosq}\aligned
      \frac{\beta_j^2}{\beta_{j+1}^2}=\sin^2B_{j+1}+\frac{p_{j+2}}{p_{j+1}}\cos^2B_{j+1},\\
      \frac{\beta_{j+1}^2}{\beta_{j}^2}=\frac{1}{\sin^2B_{j+1}+\frac{p_{j+2}}{p_j+1}\cos^2B_{j+1}}.
      \endaligned\ee
      Since the ratios $\frac{p_{j+2}}{p_{j+1}},\,\,\frac{p_{j+1}}{p_{j+2}}$ are uniformly bounded for $\lambda>\lambda_0$  (as readily seen from ~\eqref{eqhjpj}) the claim ~\eqref{eqclmratiobeta} follows.

 Combining  ~\eqref{eqclmratiobeta} and ~\eqref{eqsumbeta'} we obtain ~\eqref{eqlimitbeta}.
   \end{proof}
   In the Introduction (Remark ~\ref{rem-uniform}), we noted the fact that the coefficient $c(y)$ is often only approximately known. It is reflected in Proposition ~\ref{prop-minam:2}:  if we equip $\mathscr{K}_{PC}:= \{c(y) \mbox{ piecewise constant function};\,\, 0<c_m\leq c(y)\leq c_M\}$ with the uniform norm $\Vert c\Vert_{\infty} = \max c(y),$ the map $\mathscr{K}_{PC}\ni c\to (\lambda_0, d)$ is continuous.

\appendix

\section{{\bf PROOF OF THEOREM ~\ref{theoreme-2sauts-nombrevaleurspropres:1}} }
\label{appendix-auxiliary-results-pwc:1}
$\empty$\\


The proof  uses two steps for fixed $k,$ which circumvent the issue of multiplicity of eigenvalues of $-\Delta_{x'}$ in $\Omega'.$\\ Recall that $A_k$ is given in ~\eqref{eqAkofy}.

{\bf Step 1 :  Existence of eigenvalues of $A_{k}$ between $c_{0}\mu^{2}_k$ and   $(c_2-\varepsilon)\mu_k^{2}$.}\\
  We use $[r]$ to designate the largest integer below $r.$\\
   Let $(0,h_0)$  be divided into  $n=\big\lbrack\frac{h_{0}}{\pi}\sqrt{\frac{c_{1}-c_{0}}{c_{0}}}\mu_{k}\big\rbrack$ subintervals of equal length $h= \frac{h_{0}}{n},$ and  let$(h_{0},h_{1})$ be divided into $N = \big\lbrack\frac{h_{1}-h_{0}}{\pi}\sqrt{\frac{c_{2}-\varepsilon-c_{1}}{c_{1}}}\mu_{k}\big\rbrack$ subintervals of equal length $\mathfrak{h}= \frac{h_{1}-h_{0}}{N}.$   It follows readily that $c_0((\frac{\pi}{h})^2 + \mu_k^2)\leq c_1 \mu_k^2$ and $c_{1} ((\frac{\pi}{\mathfrak{h}})^{2} + \mu^{2}_{k}) \leq (c_{2}-\varepsilon)\mu^{2}.$

   Define a set of $n+N$  functions $\set{\varphi_{p}}_{p=1}^{n+N}\subseteq H_{0}^{1}(0,H)$  by:
\begin{equation*}
\label{equation-2sauts-nombrevaleurspropres-zone(I):2}
\begin{array}{ll}
\varphi_{p}(y)= \left\lbrace\begin{array}{l}
\sqrt{\frac{2}{h}}\sin\left(\frac{\pi}{h}(y-(p-1)h))\right)\mbox{ if }(p-1)h<y<ph,\\
0 \mbox{ otherwise},
\end{array}
\right. &\mbox{ if } 1\leq p\leq n,\\
\varphi_{p}(y)=\left\lbrace\begin{array}{l}
\sqrt{\frac{2}{\mathfrak{h}}}\sin\left(\frac{\pi}{\mathfrak{h}}(y-(h_{0}+(p-1)\mathfrak{h}))\right) \mbox{ if }  h_{0}+(p-1)\mathfrak{h}<y<h_{0} +p\mathfrak{h},\\
0 \mbox{ otherwise},
\end{array}
\right.& \mbox{ if } n + 1\leq p\leq n+N.
\end{array}
\end{equation*}
  The functions $\set{\varphi_{p}}_{p=1}^{n+N}$ are normalized (in $L^2$) and pairwise orthogonal. Furthermore, the restrictions of $\varphi_{p}$ to $ ((p-1)h, ph),\, 1\leq p\leq n$ (resp. $n+1\leq p\leq n+N$) are  the first Dirichlet-Laplacian eigenfunctions on their supporting subintervals, associated with the eigenvalue $(\frac{\pi}{h})^{2}$ (resp. $(\frac{\pi}{\mathfrak{h}})^{2}$).


 In particular, for any linear combinations we have
\be\label{eqlincomb}\aligned
\varphi(y)=\suml_{p=1}^n \gamma_p\varphi_p(y)\Rightarrow\Rightarrow\int_{0}^{H}[(\varphi')^{2} + \mu^{2}_{k}\varphi^{2}]{\rm d}y = c_{0}((\frac{\pi}{h})^{2} + \mu^{2}_{k})\int_{0}^{H} c^{-1}\varphi^{2}{\rm d}y,\\
\varphi(y)=\suml_{p=n+1}^N \gamma_p\varphi_p(y)\Rightarrow\Rightarrow\int_{0}^{H}[(\varphi')^{2} + \mu^{2}_{k}\varphi^{2}]{\rm d}y = c_{1}((\frac{\pi}{\mathfrak{h}})^{2} + \mu^{2}_{k})\int_{0}^{H} c^{-1}\varphi^{2}{\rm d}y.
\endaligned\ee

Now we use the following $\max-\min$ principle to evaluate the $r-th$ eigenvalue of $A_{k}$  ~\cite[Ch. XIII.9.D2, pp.1543-1544)]{Dun-Schw:1}:  if $\mathcal{S}_{(r)}:=\{\mathfrak{H}_{r}\subseteq H_{0}^{1}(0,H),\quad \mathfrak{H}_{r} \mbox{ is a linear space with } \dim \mathfrak{H}_{r} = r\},$ then the $r-$th eigenvalue
 of $A_{k}$  (in the weighted space $L^{2}((0,H), c(y)^{-1}dy)$)  is given by
\begin{equation}
\label{equation-2sauts-nombrevaleurspropres-zone(I):4}
\beta_{k,r} = \inf_{\mathfrak{H}_{r}\in \mathcal{S}_{(r)}}\sup_{v\in \mathfrak{H}_{r}, v\not=0} \frac{a_{k}(v,v)}{(v,v)},\quad \mbox{ where }a_{k}(v,v)=\int_{0}^{H} (v')^{2} + \mu_{k}^{2}v^{2})\,dy,\,\,(v,v)=\int_{0}^{H} v^{2}c(y)^{-1}\,dy.
\end{equation}
 Let $r=n$ and take $\mathfrak{H}_{n}=V:=span \set{\varphi_p}_{p=1}^n.$

Since
$$\sup_{v\in V, v\not=0} \frac{a_{k}(v,v)}{(v,v)}=c_{0} ((\frac{\pi}{h})^{2} + \mu_k^{2}),$$
  it follows from the definition of $h$  that
 $$\beta_{k,n}\leq c_{0} ((\frac{\pi}{h})^{2} + \mu_k^{2})\leq c_{1}\mu_k^{2}.$$  

  We conclude that \textbf{ there are at least $n$ eigenvalues less than or equal to $c_{1}\mu_k^{2}.$}

 Applying a similar argument   to the full set $\set{\varphi_{p}}_{p=1}^{n+N}$ we infer that $$\beta_{k,n+N}\leq\max\{c_{0} ((\frac{\pi}{h})^{2} + \mu_k^{2}), c_{1} ((\frac{\pi}{\mathfrak{h}})^{2} + \mu_k^{2})\} \leq (c_{2}-\varepsilon)\mu_k^{2},$$
  where in the final estimate we used the assumptions on $h,\,\mathfrak{h}.$
 \\
 As above, we conclude that, {\textbf{for a fixed $k$ there are at least $n+N$ eigenvalues of $A_k$ smaller than or equal to $(c_{2}-\varepsilon)\mu_k^{2}.$}}

{\bf Step 2 : Existence of eigenvalues of $A_{k}$ between $(c_{1}+\varepsilon)\mu^{2}_{k}$ and $(c_{2}-\varepsilon)\mu^{2}_{k}.$}\\
  Consider the self-adjoint operator $A^{0}_{k}u= -c_{0}u''+c_{0}\mu_k^{2} u$ acting in $L^{2}(0,H; c_0^{-1}dy)$ subject to Dirichlet boundary conditions.  It is associated to the variational form $a_k^0(v,w) = \int_0^H (v'(y)\overline{w'(y)} + \mu_k^2 v(y)\overline{w(y)})dy, v,w\in H_0^1(0,H).$  Let $\set{\beta^{0}_{k,p}= c_{0}\frac{\pi^{2}}{H^{2}}p^{2} + c_{0}\mu_k^{2}}_{p=1}^{\infty}$ be the nondecreasing sequence of its eigenvalues.  Using ~\eqref{equation-2sauts-nombrevaleurspropres-zone(I):4} the comparison with  the eigenvalues  of $A_{k}$ is straightforward since the  forms $a_k,\,a_k^0$ are  identical.   We conclude that  $$\beta^{0}_{k,p}\leq \beta_{k,p},\,\,p=1,2,\ldots.$$
  The explicit expression of $\beta^{0}_{k,p} $ entails that if $p>\frac{H}{\pi}\sqrt{\frac{c_{1}+\varepsilon - c_{0}}{c_{0}}}\mu_{k}$ then  $(c_{1}+\varepsilon)\mu^{2}_{k}<\beta^{0}_{k,p}\leq \beta_{k,p}.$
    On the other hand, for $p\leq n+ N,$ we saw at the end of Step 1 that $\beta_{k,p}\leq (c_2-\varepsilon)\mu_k^2.$ So, to obtain eigenvalues of $A_k$ in $((c_{1}+\varepsilon)\mu^{2}_{k},(c_{2}-\varepsilon)\mu^{2}_{k})$  it   suffices to find an integer $p$ such that
\begin{equation}
\label{equation-2sauts-nombrevaleurspropres-zone(I):5}
\frac{H}{\pi}\sqrt{\frac{c_{1}+\varepsilon - c_{0}}{c_{0}}}\mu_{k}  < p\leq \frac{h_{0}}{\pi}\sqrt{\frac{c_{1}-c_{0}}{c_{0}}}\mu_{k} + \frac{h_{1}-h_{0}}{\pi}\sqrt{\frac{c_{2}-\varepsilon-c_{1}}{c_{1}}}\mu_{k} -2.
\end{equation}
  Such an integer exists if \be\label{eqc0c1c2eps}\frac{h_{0}}{\pi}\sqrt{\frac{c_{1}-c_{0}}{c_{0}}}\mu_{k} + \frac{h_{1}-h_{0}}{\pi}\sqrt{\frac{c_{2}-\varepsilon-c_{1}}{c_{1}}}\mu_{k} -  \frac{H}{\pi}\sqrt{\frac{c_{1}+\varepsilon - c_{0}}{c_{0}}}\mu_{k}>3.\ee
  
   It is readily seen that ~\eqref{eqc0c1c2eps} holds for sufficiently large $\mu_k$ if
\begin{equation}
\label{equation-2sauts-nombrevaleurspropres-zone(I):6}
\sqrt{\frac{c_{1}(c_{1} + \varepsilon -c_{0})}{c_{0}(c_{2}-\varepsilon - c_{1})}}\left( 1 - \frac{h_{0}}{H}\sqrt{\frac{c_{1}-c_{0}}{c_{1} + \varepsilon -c_{0}}}\right)< \frac{h_{1} - h_{0}}{H}.
\end{equation}
 The validity of ~\eqref{equation-2sauts-nombrevaleurspropres-zone(I):6} with $\varepsilon = 0$ follows   exactly   from the condition  ~\eqref{equation-2sauts-nombrevaleurspropres-zone(I):1}. By continuity, \eqref{equation-2sauts-nombrevaleurspropres-zone(I):6}  will be satisfied for $\varepsilon<\varepsilon_{0},$ for sufficiently small $ \varepsilon_{0}>0.$ \hfill$\Box$\\

\end{document}